\newtheorem{theorem}{Theorem}[section]
\newtheorem{lemma}[theorem]{Lemma}
\newtheorem{proposition}[theorem]{Proposition}
\theoremstyle{definition}
\newtheorem{definition}[theorem]{Definition}
\newtheorem{def-thm}[theorem]{Definition-Theorem}
\newtheorem{def-lemma}[theorem]{Definition-Lemma}
\newtheorem{def-prop}[theorem]{Definition-Proposition}
\theoremstyle{remark}
\newtheorem{remark}[theorem]{Remark}
\numberwithin{theorem}{section}
\numberwithin{equation}{section}
\numberwithin{figure}{section}
\numberwithin{table}{section}
\DeclareMathOperator{\id}{id}
\DeclareMathOperator{\ob}{ob}
\DeclareMathOperator{\op}{op}
\DeclareMathOperator{\sSet}{\mathbf{sSet}}
\DeclareMathOperator{\ub}{\underline{\bullet}}
\DeclareMathOperator{\usSet}{\underline{\mathbf{sSet}}}
\newcommand{\B}{\mathcal{B}}
\newcommand{\A}{\mathcal{A}}
\newcommand{\V}{\mathcal{V}}
\renewcommand{\S}{\mathcal{S}}
\newcommand{\uV}{\protect\underline{\mathcal{V}}}
\newcommand{\uS}{\protect\underline{\mathcal{S}}}
\renewcommand{\i}{\textsc{i}}
\newcommand{\uA}{\protect\underline{\mathcal{A}}}
\newcommand{\uB}{\protect\underline{\mathcal{B}}}
\begin{document}

\title{Cylinder, Tensor and Tensor-Closed Module}

\author{Seunghun Lee}

\address{Department of Mathematics, Konkuk University,
Kwangjin-Gu Hwayang-dong 1, Seoul 143-701, Korea}

\email{mbrs@konkuk.ac.kr}

\thanks{This research was supported by Basic Science Research
Program through the National Research Foundation of Korea(NRF)
funded by the Ministry of Education, Science and
Technology(2009-0076403)}

\subjclass[2010]{Primary 18G55; Secondary 55U35}

\date{}


\begin{abstract}
The purpose of this note is to show that, if $\V$ is a closed monoidal category, the following three notions are equivalent.
\begin{enumerate}
  \item Category with $\V$-structure and cylinder.
  \item Tensored $\V$-category.
  \item Tensor-closed $\V$-module.
\end{enumerate}
As an application we will show that, if $\V$ is closed and symmetric, then  given a category $\S$ there is an one-to-one correspondence between the set of $\V$-structures with cylinder and path on $\S$ introduced by Quillen and the set of closed $\V$-module structures on $\S$ introduced by Hovey.
\end{abstract}

\maketitle

\setcounter{tocdepth}{1}
\tableofcontents


\section{Introduction}

Let $\V=(\V,\otimes,I,a,l,r)$ be a monoidal category.
The purpose of this note is to show that, if $\V$ is closed, the following three notions are equivalent.
Precise statements will be given in Theorem \ref{thm:cp2.7.5} and Theorem \ref{thm:cp2.6.9}.
We would like to emphasize that $\V$ need not be symmetric.
\begin{enumerate}
  \item\label{itm:cp2.1.1} Category with $\V$-structure and cylinder.
  \item\label{itm:cp2.1.2} Tensored $\V$-category.
  \item Tensor-closed $\V$-module.
\end{enumerate}
As an application we will show in Theorem \ref{thm:cp2.1.11} that, if $\V$ is closed and symmetric, then  given a category $\S$ there is an one-to-one correspondence between the set of $\V$-structures with cylinder and path on $\S$ introduced by Quillen and the set of closed $\V$-module structures on $\S$ introduced by Hovey.

Our reference for the $\V$-category, or the category enriched over $\V$,  is \cite{kelly-05}.
A $\V$-category has an underlying ordinary category. A category $\S$ with $\V$-structure defined shortly
has a canonically associated $\V$-category whose underlying category is isomorphic to $\S$ as ordinary categories.
We refer to section \ref{sec:cp2.v-st} for the detail.
There is no essential difference between the $\V$-category and the category with $\V$-structure. But the category with $\V$-structure is the right one for us to carry out the comparison with.

In \cite{quillen-67}, Quillen introduced the simplicial category in the course of defining the simplicial model category. A simplicial category is a category with $\sSet$-structure where $\sSet$ is the category of the simplicial sets.

\begin{definition}[cf.~Definition 1 in chapter 2.1 of \cite{quillen-67}]
\label{def:cp2.4.5}
  Let $\V$ be a monoidal category.
  We say that a category $\S$ has a $\V$-structure if
  \begin{enumerate}
    \item there exists a functor
        \begin{equation}
        \label{eqn:cp2.4.17}
          \uS(-,-):\S^{\op}\times\S\rightarrow \V,
        \end{equation}
    \item for every $X,Y,Z\in\ob\S$, there exists a morphism
        \begin{equation}
        \label{eqn:cp2.4.18}
          \ub_{X,Y,Z}:\uS(Y,Z)\otimes\uS(X,Y)\rightarrow\uS(X,Z)
        \end{equation}
        of $\V$, and
    \item there exists a natural isomorphism
        \begin{equation}
        \label{eqn:cp2.4.15}
          \varphi:\S(-,-)\xrightarrow{\cong}\V( I ,\uS(-,-))
        \end{equation}
  \end{enumerate}
  such that
  \begin{enumerate}
  \setcounter{enumi}{3}
    \item the following diagram commutes
        \begin{equation}
        \label{eqn:cp2.4.2}
          \xymatrix@C=-1em{
            (\uS(Z,W)\otimes\uS(Y,Z))\otimes\uS(X,Y) \ar[rr]^{a}
              \ar[d]_{\ub\otimes1} &
              &\uS(Z,W)\otimes(\uS(Y,Z)\otimes\uS(X,Y))
              \ar[d]^{1\otimes\ub}\\
            \uS(Y,W)\otimes\uS(X,Y) \ar[r]_(0.63){\ub} & \uS(X,W)
              & \uS(Z,W)\otimes\uS(X,Z), \ar[l]^(0.63){\ub}}
        \end{equation}
    \item for every $X,Y,Z\in\ob\S$ and $f\in\S(X,Y)$, the morphism
        \begin{equation}
        \label{eqn:cp2.4.21}
          \uS(f,Z):\uS(Y,Z)\rightarrow\uS(X,Z)
        \end{equation}
        of $\V$ is the composition
        \begin{equation}
        \label{eqn:cp2.4.22}
          \uS(Y,Z)\xrightarrow{r^{-1}} \uS(Y,Z)\otimes I \xrightarrow{1\otimes\varphi(f)} \uS(Y,Z)\otimes\uS(X,Y) \xrightarrow{\ub} \uS(X,Z)
        \end{equation}
        and
    \item for every $X,Y,Z\in\ob\S$ and $g\in\S(Y,Z)$, the morphism
        \begin{equation}
        \label{eqn:cp2.4.23}
          \uS(X,g):\uS(X,Y)\rightarrow\uS(X,Z)
        \end{equation}
        of $\V$ is the composition
        \begin{equation}
        \label{eqn:cp2.4.24}
          \uS(X,Y)\xrightarrow{l^{-1}} I\otimes\uS(X,Y) \xrightarrow{\varphi(g)\otimes 1} \uS(Y,Z)\otimes\uS(X,Y) \xrightarrow{\ub} \uS(X,Z).
        \end{equation}
  \end{enumerate}
\end{definition}

Besides the model structure, a simplicial model category has two additional structures, cylinder and path.
To define cylinder \emph{and} path, we need $\V$ to be not only closed but also symmetric. The symmetry of $\sSet$ was used to define $\sharp(\psi)$ in Definition 3 in chapter 2.1 of \cite{quillen-67}.
Below we recall the definition of cylinder. The definition of path will be given in Definition \ref{def:cp2.1.28}.

\begin{definition}[cf.~Definition 3 in chapter 2.1 of \cite{quillen-67}]
\label{def:cp2.4.1}
  Let $\V$ be a closed monoidal category.
  Let $\S$ be a category with $\V$-structure.
  We say that $\S$ has cylinder if
  \begin{enumerate}
    \item for every $K\in\ob\V$ and $X\in\ob\S$, there exists an object
        \begin{equation}
        \label{eqn:cp2.5.3}
          K\otimes X
        \end{equation}
        of $\S$,
    \item for every $K\in\ob\V$ and $X\in\ob\S$, there exists a morphism
        \begin{equation}
        \label{eqn:cp2.4.0}
          \alpha_{K,X}:K\rightarrow \uS(X,K\otimes X)
        \end{equation}
        of $\V$, and
    \item for every $K\in\ob\V$ and $X,Y\in \ob\S$, there exists an isomorphism
        \begin{equation}
        \label{eqn:cp2.4.1}
          \underline\phi_{K,X,Y}:\uS(K\otimes X,Y)
            \rightarrow \uV(K,\uS(X,Y))
        \end{equation}
        of $\V$ such that the following diagram commutes
        \begin{equation}
        \label{eqn:cp.1.1}
          \xymatrix{
            \uS(K\otimes X,Y)\otimes K
              \ar[d]_{\underline\phi_{K,X,Y}\otimes 1}
              \ar[rr]^(0.44)
                {1\otimes\alpha_{K,X}}
              && \uS(K\otimes X , Y)\otimes \uS(X,K\otimes X)
              \ar[d]^{\ub}\\
            \uV(K, \uS(X,Y))\otimes K
              \ar[rr]_(0.58){\epsilon_{\uS(X,Y)}}
                && \uS(X,Y)}
        \end{equation}
        where $\epsilon_{\uS(X,Y)}$ is \eqref{eqn:elno.5.3}.
  \end{enumerate}
  We call the above triple $(K\otimes X,\alpha_{K,X},\{\underline\phi_{K,X,Y}\}_{Y\in\ob\S})$ a $K$-cylinder of $X$.
\end{definition}

\begin{remark}
\label{rem:cp2.1.4}
  If a monoidal category $\V$ is closed, then equipped with the $\V$-structure in Proposition \ref{pro:cp2.3.5}, $\V$ has cylinder. See Proposition \ref{pro:cp2.7.1} for the precise statement.
\end{remark}

\begin{remark}
  By an abuse of notion, we will write $\alpha$ and $\underline\phi$ instead of $\alpha_{K,X}$ and $\underline\phi_{K,X,Y}$ respectively when no confusion is likely to occur.
\end{remark}

Given $K\in\ob\V$ and $X\in\ob\S$, the pair $(K\otimes X, \alpha_{K,X})$ is essentially unique.

\begin{proposition}
\label{pro:esht.1.1}
  Let $\V$ be a closed monoidal category.
  Let $\S$ be a category with $\V$-structure.
  Let $K\in\ob\V$ and $X\in\ob\S$.
  Let
  $(K\otimes X,\alpha_{K,X},\{\underline\phi_{K,X,Y}\}_{Y\in\S})$
  and $(K\otimes' X,\alpha_{K,X}',\{\underline\phi_{K,X,Y}'\}_{Y\in\S})$
  be two $K$-cylinders of $X$.
  Then
  there exists an unique isomorphism
  \[f:K\otimes X\rightarrow K\otimes' X\] of $\S$
  such that
  \[
    \alpha_{K,X}'=\uS(X,f)\cdot\alpha_{K,X}
  \]
  holds.
\end{proposition}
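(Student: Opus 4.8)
The plan is to read off from a $K$-cylinder a representation of the functor $\V(K,\uS(X,-)):\S\to\Set$, with $\alpha_{K,X}$ as its universal element, and then to invoke the Yoneda lemma.

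Given a $K$-cylinder $(K\otimes X,\alpha_{K,X},\{\underline\phi_{K,X,Y}\}_Y)$ and $Y\in\ob\S$, I would first build a bijection
\[
\Phi_Y:\S(K\otimes X,Y)\longrightarrow\V(K,\uS(X,Y))
\]
as the composite of the bijection $\varphi$ of \eqref{eqn:cp2.4.15}, the bijection $\V(I,\underline\phi_{K,X,Y})$ induced by the isomorphism \eqref{eqn:cp2.4.1}, and the adjunction bijection $\V(I,\uV(K,\uS(X,Y)))\cong\V(K,\uS(X,Y))$ supplied by the closed structure of $\V$ (its counit being $\epsilon$ from \eqref{eqn:elno.5.3}). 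Being a composite of bijections, $\Phi_Y$ is itself a bijection for every $Y$.

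The technical heart is the explicit formula
\[
\Phi_Y(g)=\uS(X,g)\cdot\alpha_{K,X},\qquad g\in\S(K\otimes X,Y).
\]
Unwinding the closed-structure adjunction presents $\Phi_Y(g)$ as $\epsilon_{\uS(X,Y)}\cdot\bigl((\underline\phi_{K,X,Y}\cdot\varphi(g))\otimes 1_K\bigr)\cdot l^{-1}$. Into this I would substitute the commutativity of the square \eqref{eqn:cp.1.1}, namely $\epsilon_{\uS(X,Y)}\cdot(\underline\phi_{K,X,Y}\otimes 1_K)=\ub\cdot(1\otimes\alpha_{K,X})$, so as to trade the $\epsilon$--$\underline\phi$ factor for $\ub\cdot(1\otimes\alpha_{K,X})$. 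Using bifunctoriality of $\otimes$ to rewrite $(1\otimes\alpha_{K,X})\cdot(\varphi(g)\otimes 1_K)$ as $(\varphi(g)\otimes 1)\cdot(1_I\otimes\alpha_{K,X})$, and then the naturality of the unit isomorphism $l$ to move $\alpha_{K,X}$ past $l^{-1}$, the remaining composite is precisely the defining expression \eqref{eqn:cp2.4.24} for $\uS(X,g)$ precomposed with $\alpha_{K,X}$. This diagram chase --- reconciling the abstract adjunction isomorphism with the concrete composition law $\ub$ of the $\V$-structure --- is where the compatibility square, closedness, and Definition \ref{def:cp2.4.5} are all consumed, and I expect it to be the main obstacle. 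The formula has two immediate consequences: $\Phi$ is natural in $Y$, because $\uS(X,-)$ is a functor, and $\Phi_{K\otimes X}(\id_{K\otimes X})=\alpha_{K,X}$.

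It remains to conclude by Yoneda. Writing $\Phi'$ for the analogous natural bijection attached to the second $K$-cylinder $(K\otimes' X,\alpha_{K,X}',\{\underline\phi_{K,X,Y}'\}_Y)$, the composite $\Phi^{-1}\cdot\Phi':\S(K\otimes' X,-)\Rightarrow\S(K\otimes X,-)$ is a natural isomorphism of representable functors, hence, the Yoneda embedding being fully faithful, is induced by a unique isomorphism $f:K\otimes X\to K\otimes' X$, explicitly $f=\Phi^{-1}_{K\otimes' X}(\alpha_{K,X}')$. Applying $\Phi_{K\otimes' X}$ and the formula above yields $\uS(X,f)\cdot\alpha_{K,X}=\Phi_{K\otimes' X}(f)=\alpha_{K,X}'$, which is the asserted identity. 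Uniqueness follows at once: any morphism $f$ satisfying $\uS(X,f)\cdot\alpha_{K,X}=\alpha_{K,X}'$ has $\Phi_{K\otimes' X}(f)=\alpha_{K,X}'$, and $\Phi_{K\otimes' X}$ is injective.
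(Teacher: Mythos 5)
Your proposal is correct and follows essentially the same route as the paper: your key formula $\Phi_Y(g)=\uS(X,g)\cdot\alpha_{K,X}$ is precisely the paper's Lemma \ref{lem:cp2.1.24}, namely $\underline\phi_{K,X,Y}\cdot\varphi(g)=\varpi(\uS(X,g)\cdot\alpha_{K,X})$, established by the same diagram chase through \eqref{eqn:cp.1.1}, bifunctoriality of $\otimes$, naturality of $l$ and \eqref{eqn:cp2.4.24}, after which the paper likewise obtains $f$ as the unique preimage of $\alpha_{K,X}'$ under the resulting bijection. The only difference is cosmetic: you make the isomorphism property of $f$ explicit via the Yoneda embedding, where the paper leaves the standard uniqueness-of-representation argument implicit.
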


The functoriality and the naturality are hidden in the definition of cylinder.

\begin{proposition}
\label{pro:esht.1.2}
  Let $\V$ be a closed monoidal category.
  Let $\S$ be a category with $\V$-structure.
  We assume that, for every $K\in\ob\V$ and $X\in\ob\S$,
  a $K$-cylinder of $X$  exists and we fix one
  $(K\otimes X,\alpha_{K,X},\{\underline\phi_{K,X,Y}\}_{Y\in\S})$. Then
  \begin{enumerate}
    \item there exists an unique bifunctor
        \begin{equation}
        \label{eqn:cp2.1.13}
          -\otimes-:\V\times\S\rightarrow \S
        \end{equation}
        such that
        \begin{enumerate}
          \item on objects, it maps $(K,X)$ to $K\otimes X$ and
          \item
              if $L\in\sSet$,
              $Y\in\S$, $u\in\sSet(K,L)$ and $v\in\S(X,Y)$, then
              \begin{equation}
                \xymatrix{
                  K \ar[r]^(0.3){\alpha_{K,X}} \ar[d]_u
                    & \uS(X,K\otimes X) \ar[d]^{\uS(X,u\otimes X)}\\
                  L \ar[r]_(0.3){\alpha_{L,X}} & \uS(X,L\otimes X)}
              \end{equation}
              and
              \begin{equation}
                \xymatrix{
                  K \ar[rr]^(0.4){\alpha_{K,X}}
                    \ar[d]_{\alpha_{K,Y}}
                    && \uS(X,K\otimes X)
                    \ar[d]^{\uS(X,K\otimes v)}\\
                  \uS(Y,K\otimes Y) \ar[rr]_{\uS(v,K\otimes Y)}
                    && \uS(X, K\otimes Y)}
              \end{equation}
              commute.
        \end{enumerate}
    \item $\underline\phi_{K,X,Y}$ is natural in $K$, $X$ and $Y$.
  \end{enumerate}
\end{proposition}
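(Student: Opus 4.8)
The plan is to run everything off the universal property hidden in the cylinder axioms. For fixed $K\in\ob\V$ and $X\in\ob\S$ I would first record that the pair $(K\otimes X,\alpha_{K,X})$ represents the functor $\V(K,\uS(X,-))\colon\S\to\Set$; precisely, for every $Z\in\ob\S$ the assignment
\[
  \Phi_{K,X,Z}\colon\S(K\otimes X,Z)\longrightarrow\V(K,\uS(X,Z)),
  \qquad f\longmapsto\uS(X,f)\cdot\alpha_{K,X},
\]
is a bijection. Indeed, applying $\V(I,-)$ to the isomorphism $\underline\phi_{K,X,Z}$, precomposing with the isomorphism $\varphi$ of \eqref{eqn:cp2.4.15} and postcomposing with the closed-category bijection $\V(I,\uV(K,M))\cong\V(K,M)$, one obtains a bijection that the commuting square \eqref{eqn:cp.1.1} identifies with $\Phi_{K,X,Z}$; this is the universal property underlying Proposition \ref{pro:esht.1.1}. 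Naturality of $\Phi_{K,X,Z}$ in $Z$ is then immediate from the formula together with the functoriality of $\uS(X,-)$.

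With the bijections $\Phi$ in hand I would define the action on morphisms by universality. For $u\in\V(K,L)$ let $u\otimes X\colon K\otimes X\to L\otimes X$ be the unique morphism with $\Phi_{K,X,L\otimes X}(u\otimes X)=\alpha_{L,X}\cdot u$; unwinding $\Phi$ this is exactly the first of the two commuting squares in the statement. For $v\in\S(X,Y)$ let $K\otimes v\colon K\otimes X\to K\otimes Y$ be the unique morphism with $\Phi_{K,X,K\otimes Y}(K\otimes v)=\uS(v,K\otimes Y)\cdot\alpha_{K,Y}$, which is the second square. Functoriality in each variable separately is then forced by the injectivity of $\Phi$: both $\id_{K\otimes X}$ and $(u'\otimes X)\cdot(u\otimes X)$ satisfy the defining equations of $\id_K\otimes X$ and $(u'\cdot u)\otimes X$ respectively (using that $\uS(X,-)$ is a functor and the first square), hence coincide with them, and the $\S$-variable is handled identically via the second square.

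It then remains to set $u\otimes v:=(L\otimes v)\cdot(u\otimes X)$ and to verify the interchange law $(L\otimes v)\cdot(u\otimes X)=(u\otimes Y)\cdot(K\otimes v)$, which is what promotes $-\otimes-$ to a genuine bifunctor. Both sides lie in $\S(K\otimes X,L\otimes Y)$, so by injectivity of $\Phi_{K,X,L\otimes Y}$ it suffices to compare their images under $\Phi$; using the bifunctoriality of $\uS$ together with the two defining squares at the relevant objects, each image reduces to $\uS(v,L\otimes Y)\cdot\alpha_{L,Y}\cdot u$, so the composites agree. Uniqueness of the bifunctor is again a consequence of injectivity of $\Phi$: condition (b) forces any candidate to agree with our $u\otimes X$ and $K\otimes v$, and bifunctoriality then pins it down on a general $u\otimes v=(\id_L\otimes v)\cdot(u\otimes\id_X)$.

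For the naturality of $\underline\phi$ I would read \eqref{eqn:cp.1.1} as the assertion that, under the tensor–hom adjunction $(-\otimes K)\dashv\uV(K,-)$ with counit $\epsilon$, the morphism $\underline\phi_{K,X,Y}$ is the transpose of $e_{K,X,Y}:=\ub\cdot(1\otimes\alpha_{K,X})\colon\uS(K\otimes X,Y)\otimes K\to\uS(X,Y)$. Naturality of $\underline\phi$ then follows by transposing each naturality square and reducing to a naturality identity for the family $e_{K,X,Y}$: in the variable $Y$ and in the $\uS(K\otimes X,Y)$-source this uses only naturality of the transpose for a fixed adjunction together with the associativity axiom \eqref{eqn:cp2.4.2} and the definition of $\uS$ on morphisms, while the variable $X$ uses in addition the second defining square of the tensor. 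The delicate point, which I expect to be the main obstacle, is naturality in $K$: here the adjunction itself varies with $K$, so one must invoke the functoriality of the closed structure in the exponent — the compatibility of the counits $\epsilon$ for $K$ and $L$ along a morphism $u\colon K\to L$ — in order to transport the transpose correctly. I would stress that this requires only that $\V$ be closed and never that it be symmetric, in keeping with the standing hypotheses; and since every identity to be checked is an equation between morphisms of $\V$, all verifications are carried out diagrammatically in $\V$ rather than on underlying sets.
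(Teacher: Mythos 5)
Your proposal is correct and takes essentially the same route as the paper: your bijection $\Phi_{K,X,Z}(f)=\uS(X,f)\cdot\alpha_{K,X}$ is precisely the content of the paper's Lemma \ref{lem:cp2.1.24} (via $\varpi^{-1}\cdot\V(I,\underline\phi_{K,X,Z})\cdot\varphi$), your two defining squares with uniqueness-by-injectivity reproduce Definitions \ref{def:cp2.6.3} and \ref{def:cp2.6.5} together with the paper's cube diagram for the interchange law, and your naturality checks for $\underline\phi$ by transposing along $\pi$ --- with the counit compatibility \eqref{eqn:cp2.2.14} handling the varying exponent in the $K$-variable and no symmetry invoked --- match the paper's cases (a)--(c). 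The only detail you leave implicit is that the $K$-naturality chase also uses the first defining square \eqref{eqn:cp2.1.1} and the composition axioms \eqref{eqn:cp2.4.2}, \eqref{eqn:cp2.4.22}, \eqref{eqn:cp2.4.24}, exactly as in the paper.
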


If a monoidal category $\V$ is closed and symmetric and a category $\S$ has a $\V$-structure with cylinder, then we know from Theorem \ref{thm:cp2.7.5} and the discussion in chapter 1.10 in \cite{kelly-05} that there exists a $\V$-functor $-\otimes -:\V\otimes\S\rightarrow \S$ whose object function is \eqref{eqn:cp2.5.3}
such that the isomorphism $\underline\phi_{K,X,Y}$ in \eqref{eqn:cp2.4.1} is $\V$-natural in $K,X,Y$ and $\alpha_{K,X}$ in \eqref{eqn:cp2.4.0} is a component of the unit of the $\V$-adjunction associated with $\underline\phi_{-,X,-}$.

Before we compare (1) an (2), we recall the tensored $\V$-category. We would like to mention that in contrast with the Definition 6.5.1 in \cite{borceux-94-2}, we do not ask $\V$ to be symmetric.

\begin{definition}[Definition 6.5.1 in \cite{borceux-94-2}]
\label{def:cp2.1.3}
  Let $\V$ be a closed monoidal category.
  We say that a $\V$-category $\S$ is tensored if
  \begin{enumerate}
    \item for every $K\in\ob\V$ and $X\in\ob\S$, there exists an object
        \begin{equation}
        \label{eqn:cp2.7.1}
          K\otimes X
        \end{equation}
        of $\S$, and
    \item for every for every $K\in\ob\V$ and $X,Y\in \ob\S$, there exists an isomorphism
        \begin{equation}
        \label{eqn:cp2.5.1}
          \underline\phi_{K,X,Y}:\uS(K\otimes X,Y)
            \rightarrow \uV(K,\uS(X,Y))
        \end{equation}
        of $\V$ such that $\underline\phi_{K,X,Y}$ is $\V$-natural in $Y$.
  \end{enumerate}
  We call $K\otimes X$ the tensor of $K$ and $X$.
\end{definition}

In Remark 2 in chapter 2.1 of \cite{quillen-67}, Quillen mentioned that if $\S$ is a category with $\sSet$-structure and cylinder, then $K\otimes X$ represents the $\sSet$-functor $\usSet(K,\uS(X,-))$. The following result shows it holds for every closed monoidal category $\V$.

\begin{theorem}
\label{thm:cp2.7.5}
  Let $\V$ be a closed monoidal category.
  Let $\S$ be a category with $\V$-structure.
  \begin{enumerate}
    \item If $\S$ has cylinder, then the $\V$-category associated with $\S$ by Proposition \ref{pro:cp2.3.4} is tensored: \eqref{eqn:cp2.7.1} and \eqref{eqn:cp2.5.1} are provided by \eqref{eqn:cp2.5.3} and \eqref{eqn:cp2.4.1} respectively and
        \eqref{eqn:cp2.4.1} is $\V$-natural in $Y$.
    \item   If the $\V$-category associated with $\S$ by Proposition \ref{pro:cp2.3.4} is tensored,
        then $\S$ has cylinder: for every $K\in\ob\V$ and $X\in\ob\S$,
        \begin{equation}
        \label{eqn:cp2.1.16}
          (K\otimes X,\varpi^{-1}(\underline\phi_{K,X,K\otimes X}\cdot j_{K\otimes X}),\{\underline\phi_{K,X,Y}\}_{Y\in\ob\S})
        \end{equation}
        is a $K$-cylinder of $X$ where $\varpi$ is \eqref{eqn:cp2.2.17}.
  \end{enumerate}
\end{theorem}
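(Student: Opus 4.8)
The plan is to deduce both statements from a single observation about the closed adjunction $(-\otimes K)\dashv \uV(K,-)$, whose counit is the evaluation $\epsilon$ of \eqref{eqn:elno.5.3} and whose transpose bijection is $\varpi$ of \eqref{eqn:cp2.2.17}. Namely, for a morphism $\alpha_{K,X}\colon K\rightarrow \uS(X,K\otimes X)$ the square \eqref{eqn:cp.1.1} is equivalent to the identity
\[
  \underline\phi_{K,X,Y}\;=\;\uV\bigl(\alpha_{K,X},\uS(X,Y)\bigr)\cdot \bigl(\uS(X,-)\bigr)_{K\otimes X,\,Y},
\]
in which $\bigl(\uS(X,-)\bigr)_{K\otimes X,Y}\colon \uS(K\otimes X,Y)\rightarrow \uV\bigl(\uS(X,K\otimes X),\uS(X,Y)\bigr)$ denotes the action on hom-objects of the $\V$-functor $\uS(X,-)$ coming from Proposition \ref{pro:cp2.3.4}, that is, the transpose of the composition $\ub$. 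To see the equivalence I would transpose the right-hand side under $(-\otimes K)\dashv\uV(K,-)$: naturality of $\epsilon$ converts $\uV(\alpha_{K,X},-)$ into precomposition by $\alpha_{K,X}$, and the fact that $\bigl(\uS(X,-)\bigr)_{K\otimes X,Y}$ is the transpose of $\ub$ collapses the remaining terms to $\ub\cdot(1\otimes\alpha_{K,X})$, which is exactly the transpose of the left-hand side. I expect this transpose computation to be the main obstacle; since $\V$ is not symmetric, the bookkeeping must keep $-\otimes K$ on the right throughout and use $\epsilon_{\uS(X,Y)}$ rather than any flipped evaluation.

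Granting this identity, part (1) follows quickly. The object \eqref{eqn:cp2.5.3} and the isomorphism \eqref{eqn:cp2.4.1} are already supplied by the cylinder, so Definition \ref{def:cp2.1.3} only asks for the $\V$-naturality of $\underline\phi_{K,X,Y}$ in $Y$. But the identity above writes $\underline\phi_{K,X,-}$ as a composite of two $\V$-natural transformations of $\V$-functors $\S\rightarrow\V$: first $\bigl(\uS(X,-)\bigr)_{K\otimes X,-}\colon \uS(K\otimes X,-)\Rightarrow \uV\bigl(\uS(X,K\otimes X),\uS(X,-)\bigr)$, which is $\V$-natural because it is the action on homs of the $\V$-functor $\uS(X,-)$, and then the whiskering of $\uS(X,-)$ with the $\V$-natural transformation $\uV(\alpha_{K,X},-)$. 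A composite of $\V$-natural transformations is $\V$-natural, so $\underline\phi_{K,X,Y}$ is $\V$-natural in $Y$, as required.

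For part (2) I would adopt \eqref{eqn:cp2.1.16} as the definition of $\alpha_{K,X}$, so that $\underline\phi_{K,X,K\otimes X}\cdot j_{K\otimes X}=\varpi(\alpha_{K,X})$ holds by construction. It then suffices to establish \eqref{eqn:cp.1.1}, which by the reduction above amounts to proving $\underline\phi_{K,X,-}=\sigma_{-}$, where $\sigma_{Y}:=\uV\bigl(\alpha_{K,X},\uS(X,Y)\bigr)\cdot\bigl(\uS(X,-)\bigr)_{K\otimes X,Y}$. Both are $\V$-natural transformations $\uS(K\otimes X,-)\Rightarrow \uV(K,\uS(X,-))$ out of the representable $\uS(K\otimes X,-)$ --- the former by the tensored hypothesis, the latter by the computation of part (1) --- so the $\V$-Yoneda lemma reduces the equality to a comparison at the unit $j_{K\otimes X}$. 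There the left-hand side is $\varpi(\alpha_{K,X})$ by the definition of $\alpha_{K,X}$, while for $\sigma$ the unit axiom of the $\V$-functor $\uS(X,-)$ sends $j_{K\otimes X}$ to the name of $\id_{\uS(X,K\otimes X)}$, and precomposing that name by $\alpha_{K,X}$ returns $\varpi(\alpha_{K,X})$ as well. Hence $\underline\phi_{K,X,-}=\sigma_{-}$, which gives \eqref{eqn:cp.1.1}.

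Beyond the transpose computation flagged above, the remaining points needing care are the verification that $\uV(\alpha_{K,X},-)$ is genuinely $\V$-natural and that the $\V$-Yoneda lemma is applied to transformations valued in $\V$ regarded as a $\V$-category through the self-enrichment of Proposition \ref{pro:cp2.3.5}; both are routine once that self-enrichment and the $\V$-functoriality of the representables $\uS(X,-)$ are available.
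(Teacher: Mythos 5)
Your proposal is correct, and it reaches the theorem by a genuinely different organization than the paper's. Your pivot is the observation that, after transposing along $(-\otimes K)\dashv\uV(K,-)$ (using the naturality of $\pi$ in the middle variable, which indeed requires no symmetry), the diagram \eqref{eqn:cp.1.1} is equivalent to the factorization
\[
  \underline\phi_{K,X,Y}
  \;=\;
  \uV\bigl(\alpha_{K,X},\uS(X,Y)\bigr)\cdot \pi\bigl(\ub_{X,K\otimes X,Y}\bigr),
\]
where $\pi(\ub_{X,K\otimes X,Y})=(\uS(X,-))_{K\otimes X,Y}$ by \eqref{eqn:cp2.2.5}; part (1) then follows from closure of $\V$-naturality under composition and whiskering, and part (2) from the weak enriched Yoneda lemma applied to two $\V$-natural transformations out of the representable $\uS(K\otimes X,-)$, compared at $j_{K\otimes X}$. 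The paper never states this factorization: it reformulates $\V$-naturality in $Y$ concretely (Lemma \ref{lem:cp2.5.4}, via $\delta=\pi^{-1}(\uV(K,\uS(X,-))_{Y,Z})$), proves the evaluation identity of Lemma \ref{lem:cp2.5.5}, settles (1) by a single cube chase whose front face is the associativity of $\ub$, and in (2) re-derives by hand exactly the instance of Yoneda you invoke (its first diagram, expressing $\underline\phi_{K,X,Y}$ as $\delta\cdot(1\otimes\eta^X_K)\cdot r^{-1}$ via the unit law \eqref{eqn:elno.2.3}) before a second chase gives \eqref{eqn:cp.1.1}. The computational content is ultimately the same — associativity of $\ub$, the unit law, and the adjunction calculus — but your route buys conceptual economy (one identity plus standard enriched generalities), while the paper's buys self-containedness, never importing facts beyond what it proves.

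One point you should make explicit rather than call routine: the general facts you quote — $\V$-naturality of $T_{A,-}$ for $T=\uS(X,-)$, $\V$-naturality of $\uV(f,-)$, and the weak Yoneda lemma — are developed in Kelly under a standing assumption that $\V$ is symmetric, which this paper deliberately avoids. All three do hold in the non-symmetric left-closed setting, and the paper's Lemma \ref{lem:cp2.4.14} is precisely the tool for checking them by transposition: for instance, the $\V$-naturality of $(\uS(X,-))_{K\otimes X,-}$ transposes to the composition axiom \eqref{eqn:elno.2.6} for $\uS(X,-)$, the $\V$-naturality of $\uV(\alpha_{K,X},-)$ transposes to an identity verified with \eqref{eqn:cp2.2.14}, and your Yoneda evaluation uses the unit axiom \eqref{eqn:elno.2.7} together with $\uV(\alpha_{K,X},1)\cdot\varpi(1)=\varpi(\alpha_{K,X})$. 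With those verifications written out (each a short transposition), your proof is complete.
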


The $\V$-module was introduced by Hovey in \cite{hovey-99}
in the course of defining the closed $\V$-model category when $\V$ is a monoidal model category.
$\sSet$ is a monoidal model category.
The $\V$-modules are the generalization of the monoidal category $\V$ as the $R$-modules generalize a ring $R$ not necessarily commutative.
We would like to point out that in the following two definitions we do not assume that $\V$ is closed.

\begin{definition}[Definition 4.1.6 in \cite{hovey-99}]
\label{def:cp2.6.1}
  Let $\V$ be a monoidal category.
  We say that a category $\S$ is a left $\V$-module if
  \begin{enumerate}
    \item there exists a functor
        \begin{equation}
        \label{eqn:cp2.6.0}
          -\otimes -:\V\times\S\rightarrow \S,
        \end{equation}
    \item for every $K,L\in\ob\V$ and $X\in\ob\S$, there exists an isomorphism
        \begin{equation}
        \label{eqn:cp2.6.1}
          a_{K,L,X}:(K\otimes L)\otimes X\xrightarrow{\cong} K\otimes(L\otimes X)
        \end{equation}
        of $\S$ natural in $K,L$ and $X$, and
    \item for every $X\in\ob\S$, there exists an isomorphism
        \begin{equation}
        \label{eqn:cp2.6.2}
          l_X:I \otimes X\xrightarrow{\cong} X
        \end{equation}
        of $\S$ natural in $X$,
  \end{enumerate}
  such that
  \begin{enumerate}
  \setcounter{enumi}{3}
    \item for every $K,L,M\in\ob\V$ and $X\in\ob\S$, the following diagram commutes
        \begin{equation}
        \label{eqn:cp2.6.4}
          \xymatrix@C=1.5em{
           ((K\otimes L)\otimes M)\otimes X\ar[r]^a  \ar[d]_{a \otimes 1}& (K\otimes L)\otimes(M\otimes X)\ar[r]^a  & K\otimes(L\otimes(M\otimes X))\\
           (K\otimes (L\otimes M))\otimes X\ar[rr]_a  && K\otimes((L\otimes M)\otimes X)\ar[u]_{1\otimes a }}
        \end{equation}
        and
    \item for every $K\in\ob\V$ and $X\in\ob\S$, the following diagram commutes.
        \begin{equation}
        \label{eqn:cp2.6.5}
          \xymatrix{
            (K\otimes I )\otimes X \ar[rr]^a \ar[dr]_{r\otimes 1} && K\otimes (I \otimes X)\ar[dl]^{1\otimes l}\\
            & K\otimes X&}
        \end{equation}
  \end{enumerate}
\end{definition}

\begin{remark}
  There is a corresponding notion of a right $\V$-module.
  But we will only deal with left $\V$-modules in this note. So we call them simply $\V$-modules.
\end{remark}

\begin{remark}
  In \eqref{eqn:cp2.6.1} and \eqref{eqn:cp2.6.2}, we used the same notations as the corresponding natural transformations of $\V$.
  We hope that no confusion arises.
  By an abuse of notion, we will write $a$ and $l$ instead of $a_{K,L,X}$ and $l_{X}$ respectively.
\end{remark}

If a monoidal category $\V$ is symmetric, then every left $\V$-module has an associated right $\V$-module structure. $X\otimes K$ is defined by $K\otimes X$ and the associativity morphism of the right $\V$-module is $a_{L,K,X}\cdot (c_{K,L}\otimes 1)$ where $c_{K,L}$ is the morphism \eqref{eqn:cp2.2.19} of $\V$.

Besides the model structure, a closed $\V$-model category has one more structure, the adjunction of two variables.
Hovey call a $\V$-module with an adjunction of two variables a closed $\V$-module.
An adjunction of two variables consists of three functors and two natural isomorphisms.
Of the three functors, the two are \eqref{eqn:cp2.6.0} and \eqref{eqn:cp2.6.6}.
\eqref{eqn:cp2.6.7} is one of the two natural isomorphisms.

\begin{definition}[cf.~Definition 4.1.12 in \cite{hovey-99}]
  Let $\V$ be a monoidal category.
  We say that a $\V$-module $\S$ is tensor-closed  if
  \begin{enumerate}
    \item there exists a functor
        \begin{equation}
        \label{eqn:cp2.6.6}
          \uS(-,-):\S^{\op}\times\S\rightarrow \V
        \end{equation}
        and
    \item for every $K\in\ob\V$ and $X,Y\in\ob\S$, there exists an isomorphism
        \begin{equation}
        \label{eqn:cp2.6.7}
          \phi_{K,X,Y}:\S(K\otimes X,Y)
            \xrightarrow{\cong} \V(K,\uS(X,Y))
        \end{equation}
        natural in $K,X,Y$.
  \end{enumerate}
\end{definition}

\begin{remark}
 A monoidal category $\V$ is closed if and only if $\V$ is a tensor-closed as a $\V$-module.
\end{remark}

\begin{remark}
  Again, by an abuse of notion, we will write $\phi$ instead of $\phi_{K,X,Y}$ when no confusion is likely to occur.
\end{remark}

\begin{definition}
  Let $\V$ be a monoidal category.
  Let $\S$ be a tensor-closed $\V$-module.
  We denote the unit and the counit of the adjunction $\phi_{-,X,-}$ in \eqref{eqn:cp2.6.7} with
  $\eta$ and $\epsilon$:
  \begin{align}
    \eta_K &: K\rightarrow \uS(X,K\otimes X) \\
    \label{eqn:cp2.7.13}
    \epsilon_Y &: \uS(X,Y)\otimes X\rightarrow Y
  \end{align}
\end{definition}

Just like a closed monoidal category, a tensor-closed $\V$-module has a canonical $\V$-structure, which we explain below.

\begin{definition}
  Let $\V$ be a monoidal category. Let $\S$ be a tensor-closed $\V$-module.
  For every $X,Y,Z\in\ob\S$, we define a morphism
  \begin{equation}
  \label{eqn:cp2.7.10}
    \ub_{X,Y,Z}:\uS(Y,Z)\otimes\uS(X,Y)\rightarrow\uS(X,Z)
  \end{equation}
  of $\V$ by
  \begin{equation}
  \label{eqn:cp2.7.11}
    \ub_{X,Y,Z}=\phi_{\uS(Y,Z)\otimes\uS(X,Y),X,Z}(\epsilon_Z\cdot (1\otimes\epsilon_Y)\cdot a).
  \end{equation}
\end{definition}

\begin{definition}
\label{eqn:def:1.17}
  Let $\V$ be a monoidal category. Let $\S$ be a tensor-closed  $\V$-module.
  We denote by $\varphi$ the natural isomorphism whose component $\varphi_{X,Y}$ at $(X,Y)\in\ob(\S\times\S)$ is the composition
  \begin{equation}
  \label{eqn:cp2.7.12}
    \S(X,Y)\xrightarrow{l_X^*}\S(I\otimes X,Y)\xrightarrow{\phi_{I,X,Y}}\V(I,\uS(X,Y)).
  \end{equation}
\end{definition}

We are using $\varphi$ for \eqref{eqn:cp2.4.15} and \eqref{eqn:cp2.7.12}.
The next proposition will explain why.
It is a generalization of a property, Proposition \ref{pro:cp2.3.5}, of closed monoidal categories.

\begin{proposition}
\label{pro:cp2.7.12}
  Let $\V$ be a monoidal category.
  Let $\S$ be a tensor-closed $\V$-module.
  Then $\uS(-,-)$ in  \eqref{eqn:cp2.6.6}, $\ub_{X,Y,Z}$ in  \eqref{eqn:cp2.7.10} and $\varphi$ in Definition \ref{eqn:def:1.17} form a $\V$-structure of $\S$.
\end{proposition}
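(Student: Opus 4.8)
The three data required by Definition \ref{def:cp2.4.5} are already at hand: the functor \eqref{eqn:cp2.4.17} is the internal-hom \eqref{eqn:cp2.6.6} of the tensor-closed structure, the composition morphisms \eqref{eqn:cp2.4.18} are the $\ub_{X,Y,Z}$ of \eqref{eqn:cp2.7.11}, and the natural isomorphism \eqref{eqn:cp2.4.15} is the $\varphi$ of Definition \ref{eqn:def:1.17}. That $\varphi$ really is a natural isomorphism is immediate, since each component \eqref{eqn:cp2.7.12} is the composite of the iso $l_X^{*}$ with a component of the natural iso $\phi_{I,-,-}$. Hence the whole content of the proposition is to verify axioms (4), (5) and (6), that is, the commutativity of \eqref{eqn:cp2.4.2} and the two factorizations \eqref{eqn:cp2.4.22} and \eqref{eqn:cp2.4.24}.

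The device I would use throughout is adjoint transposition across $\phi_{-,X,-}$. Since $\phi_{K,X,Z}$ is a bijection for every $K$, two morphisms $K\to\uS(X,Z)$ of $\V$ coincide if and only if their transposes $K\otimes X\to Z$ do, and the transpose of $g\colon K\to\uS(X,Z)$ is $\epsilon_Z\cdot(g\otimes 1)$, where $\epsilon$ is the counit \eqref{eqn:cp2.7.13} of $\phi_{-,X,-}$ (its dependence on the fixed first argument being kept implicit). This turns each identity to be proved, all of which are equations of $\V$-morphisms landing in some $\uS(X,-)$, into an equation of $\S$-morphisms, where the module coherences \eqref{eqn:cp2.6.4} and \eqref{eqn:cp2.6.5} live. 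Three elementary facts feed the computation: naturality of $\phi$ in $K$, giving $\phi^{-1}(g\cdot u)=\phi^{-1}(g)\cdot(u\otimes 1)$; the transposes of the structure morphisms of $\uS(-,-)$, namely $\phi^{-1}(\uS(f,Z))=\epsilon_Z\cdot(1\otimes f)$ for $f\colon X\to Y$ and $\phi^{-1}(\uS(X,g))=g\cdot\epsilon_Y$ for $g\colon Y\to Z$, each obtained by transporting the identity $\phi(\epsilon)=\id$ along the relevant naturality square of $\phi$; and the defining relation \eqref{eqn:cp2.7.11} read after transposition, $\phi^{-1}(\ub_{X,Y,Z})=\epsilon_Z\cdot(1\otimes\epsilon_Y)\cdot a$, where this $a$ is the \emph{module} associativity.

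For axiom (4) I transpose both legs of \eqref{eqn:cp2.4.2} across $\phi_{-,X,W}$ and substitute the transposed $\ub$ twice in each leg. Commuting the inner counits past the outer tensor factors by the interchange law and by naturality of the module associativity $a$ in its first and second arguments brings each leg to the shape $\epsilon_W\cdot(1\otimes\epsilon_Z)\cdot(1\otimes(1\otimes\epsilon_Y))$ followed by a chain of associativity isomorphisms; one leg is then normalized by naturality of $a$ in its base argument (for $\epsilon_Y\colon\uS(X,Y)\otimes X\to Y$) and the other by the pentagon \eqref{eqn:cp2.6.4}, after which the two expressions coincide. For axiom (5) I transpose \eqref{eqn:cp2.4.22}: the left-hand side gives $\epsilon_Z\cdot(1\otimes f)$, while the right-hand side, after naturality of $\phi$ and of $a$ in its middle argument, collapses by the triangle \eqref{eqn:cp2.6.5} and the relation $\epsilon_Y\cdot(\varphi(f)\otimes 1)=f\cdot l_X$ (which is $\phi^{-1}$ applied to \eqref{eqn:cp2.7.12}) to the very same morphism. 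Axiom (6) is dual: transposing \eqref{eqn:cp2.4.24} and using $\phi^{-1}(\uS(X,g))=g\cdot\epsilon_Y$ together with $\epsilon_Z\cdot(\varphi(g)\otimes 1)=g\cdot l_Y$ reduces everything to the single identity $l_{K\otimes X}\cdot a_{I,K,X}=l_K\otimes 1$ relating the module and the monoidal left unitors.

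I expect this last identity to be the only genuine obstacle. It is the module analogue of the standard monoidal coherence $l_{A\otimes B}\cdot a_{I,A,B}=l_A\otimes 1_B$, and although it is not among the axioms \eqref{eqn:cp2.6.4}--\eqref{eqn:cp2.6.5}, it follows from them by the usual coherence argument; I would either record it beforehand as a coherence lemma for $\V$-modules or invoke the coherence theorem. Everything else is bookkeeping, the one persistent subtlety being to keep the two distinct associativities apart, namely the \emph{monoidal} $a$ of $\V$ appearing on the top edge of \eqref{eqn:cp2.4.2} and the \emph{module} $a$ hidden inside \eqref{eqn:cp2.7.11}, since it is exactly their interaction that the pentagon \eqref{eqn:cp2.6.4} governs.
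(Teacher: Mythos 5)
Your proposal is correct and takes essentially the same route as the paper: the paper also verifies axioms (4)--(6) by transposing everything along $\phi_{-,X,-}$ (tensoring with the base object and postcomposing the counit $\epsilon$), then chasing the resulting diagrams using the transposed form of \eqref{eqn:cp2.7.11}, the naturality statement \eqref{eqn:cp2.7.9}, the pentagon \eqref{eqn:cp2.6.4} for associativity, and the triangle \eqref{eqn:cp2.6.5} for the unit laws. The coherence identity you single out, $l_{K\otimes X}\cdot a_{I,K,X}=l_K\otimes 1$, is exactly the paper's Lemma \ref{lem:cp2.7.1} (diagram \eqref{eqn:cp2.7.6}), recorded beforehand and deduced from \eqref{eqn:cp2.6.4} and \eqref{eqn:cp2.6.5} by the standard Joyal--Street argument, just as you anticipated.
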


The following result shows that (1) and (3) are equivalent.
It is implicit in \cite{quillen-67}.
See Remark 1 in chapter 2.1 in \cite{quillen-67}.

\begin{theorem}
\label{thm:cp2.6.9}
  Let $\V$ be a closed monoidal category.
  Let $\S$ be a category.
  \begin{enumerate}
    \item If $\S$ is a tensor-closed $\V$-module equipped with the $\V$-structure in Proposition \ref{pro:cp2.7.12}, then the $\V$-category associated with $\S$ by Proposition \ref{pro:cp2.3.4} is tensored. Therefore by Theorem \ref{thm:cp2.7.5}, $\S$ has cylinder.
    \item The correspondence in (1) gives a bijection from the set of tensor-closed $\V$-module structures on $\S$ to  the set of $\V$-structures with cylinder on $\S$.
  \end{enumerate}
\end{theorem}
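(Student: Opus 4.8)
The plan is to prove the two parts in turn, with the essential work concentrated in part (1); part (2) then assembles an inverse out of representability together with the essential-uniqueness results already established.

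For part (1), suppose $\S$ is a tensor-closed $\V$-module carrying the $\V$-structure of Proposition \ref{pro:cp2.7.12}. By Definition \ref{def:cp2.1.3}, and since the object $K\otimes X$ is already supplied by the module action, it suffices to produce for each $K\in\ob\V$ and $X,Y\in\ob\S$ an isomorphism $\underline\phi_{K,X,Y}:\uS(K\otimes X,Y)\rightarrow\uV(K,\uS(X,Y))$ of $\V$ that is $\V$-natural in $Y$; the associated $\V$-category is then tensored, and Theorem \ref{thm:cp2.7.5}(2) yields the cylinder. First I would \emph{define} $\underline\phi_{K,X,Y}$ as the adjoint transpose, under the closed-structure adjunction $-\otimes K\dashv\uV(K,-)$, of the composite
\[
  \uS(K\otimes X,Y)\otimes K
    \xrightarrow{\ 1\otimes\eta_K\ }
    \uS(K\otimes X,Y)\otimes\uS(X,K\otimes X)
    \xrightarrow{\ \ub\ }
    \uS(X,Y),
\]
where $\eta_K:K\rightarrow\uS(X,K\otimes X)$ is the unit of $\phi_{-,X,-}$ for the fixed $X$. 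By construction this makes the cylinder compatibility diagram \eqref{eqn:cp.1.1} commute with $\alpha_{K,X}=\eta_K$, so no separate verification of \eqref{eqn:cp.1.1} is needed.

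The two things to check are that $\underline\phi_{K,X,Y}$ is invertible and that it is $\V$-natural in $Y$. For invertibility I would test against an arbitrary $W\in\ob\V$ by applying $\V(W,-)$ and invoking the Yoneda lemma for the underlying category of $\V$. Closedness gives $\V(W,\uV(K,\uS(X,Y)))\cong\V(W\otimes K,\uS(X,Y))$, while the tensor-closed isomorphism $\phi$ gives $\V(W\otimes K,\uS(X,Y))\cong\S((W\otimes K)\otimes X,Y)$ and $\V(W,\uS(K\otimes X,Y))\cong\S(W\otimes(K\otimes X),Y)$. The key lemma is then that under these identifications the map $\V(W,\underline\phi_{K,X,Y})$ becomes precomposition with the module associativity isomorphism $a_{W,K,X}:(W\otimes K)\otimes X\rightarrow W\otimes(K\otimes X)$; since $a_{W,K,X}$ is invertible, $\V(W,\underline\phi_{K,X,Y})$ is a bijection for every $W$, whence $\underline\phi_{K,X,Y}$ is an isomorphism. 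This key lemma is where the definition \eqref{eqn:cp2.7.11} of $\ub$, the triangle identities for the adjunction $\phi_{-,X,-}$, and the module coherence \eqref{eqn:cp2.6.4}--\eqref{eqn:cp2.6.5} all enter, and it is the main obstacle of the proof. The $\V$-naturality in $Y$ I would likewise reduce, by applying $\V(W,-)$ and Yoneda, to the ordinary naturality of $\phi_{K,X,-}$ in $Y$ (which holds by hypothesis), the associativity axiom \eqref{eqn:cp2.4.2} of the $\V$-structure furnishing the compatibility needed to promote this to an honest $\V$-natural transformation.

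For part (2) I would exhibit an explicit inverse to the correspondence and check both round trips. Given a $\V$-structure with cylinder on $\S$, Proposition \ref{pro:esht.1.2}(1) already supplies the bifunctor $-\otimes-:\V\times\S\rightarrow\S$ serving as the module action, and $\uS(-,-)$ serves as the internal hom \eqref{eqn:cp2.6.6}; the external isomorphism \eqref{eqn:cp2.6.7} is obtained from $\underline\phi_{K,X,Y}$ by composing $\varphi$, then $\V(I,\underline\phi_{K,X,Y})$, then the closed-structure bijection $\V(I,\uV(K,\uS(X,Y)))\cong\V(K,\uS(X,Y))$, its naturality in $K,X,Y$ following from naturality of $\varphi$ and of $\underline\phi$ (Proposition \ref{pro:esht.1.2}(2)). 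It remains to construct the module associativity and unit isomorphisms and to verify their coherence: since the cylinder makes $\S$ tensored, both $(K\otimes L)\otimes X$ and $K\otimes(L\otimes X)$ represent $\V(K\otimes L,\uS(X,-))\cong\V(K,\uV(L,\uS(X,-)))$ naturally in $Y$, so Yoneda produces a unique natural isomorphism $a_{K,L,X}$ as in \eqref{eqn:cp2.6.1}, and similarly $l_X$ as in \eqref{eqn:cp2.6.2} from $\S(I\otimes X,Y)\cong\V(I,\uS(X,Y))\cong\S(X,Y)$; the pentagon \eqref{eqn:cp2.6.4} and triangle \eqref{eqn:cp2.6.5} then hold because both composites represent the same map, the required coherence being inherited from the associativity and unit coherence of the closed structure on $\V$. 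Finally I would check that the two constructions are mutually inverse: the object assignment $K\otimes X$ and the functor $-\otimes-$ agree by the uniqueness clause of Proposition \ref{pro:esht.1.2}, the transposes $\phi$ and $\underline\phi$ are interchanged bijectively by the closed-structure adjunction, and the remaining structure maps $\ub$, $\varphi$, $\alpha$, $a$, $l$ are pinned down on each side by the essential-uniqueness statements of Propositions \ref{pro:esht.1.1} and \ref{pro:esht.1.2} together with Proposition \ref{pro:cp2.7.12}. Here again the labor is the same coherence-transport bookkeeping as in part (1), now run in the opposite direction, and this---rather than any conceptual difficulty---is where I expect the effort to concentrate.
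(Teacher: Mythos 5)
Your proposal is correct in outline and shares the paper's overall architecture, but in part (1) it runs in the opposite direction from the paper's proof. The paper \emph{defines} $\underline\phi_{K,X,Y}$ by the representability square \eqref{eqn:cp2.7.4} (Definition \ref{def:cp2.7.10}), so invertibility is automatic, and concentrates its work on $\V$-naturality in $Y$: it derives the explicit inverse $\uS(1,\epsilon_Y)\cdot(-\otimes X)_{K,\uS(X,Y)}$ (Lemma \ref{lem:cp2.7.13}) and proves enriched naturality of $(-\otimes X)_{K,L}$ and of $\epsilon_Y$ (Lemmas \ref{lem:cp2.7.15} and \ref{lem:cp2.7.16}). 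You instead define $\underline\phi$ explicitly as $\pi(\ub\cdot(1\otimes\eta_K))$, so the cylinder axiom \eqref{eqn:cp.1.1} holds by construction, and your burden shifts to the ``key lemma'' that $\V(W,\underline\phi_{K,X,Y})$ becomes precomposition with $a_{W,K,X}$ under the identifications. That lemma is true and yields the same morphism as the paper's Definition \ref{def:cp2.7.10}: transposing under $\pi$ and $\phi$, it reduces to $\phi(g\cdot a)=\ub\cdot(\phi(g)\otimes\eta_K)$, which follows from the definition \eqref{eqn:cp2.7.11} of $\ub$, the triangle identity $\epsilon_{K\otimes X}\cdot(\eta_K\otimes 1_X)=1_{K\otimes X}$, and the \emph{naturality} of $a$ alone --- the coherence axioms \eqref{eqn:cp2.6.4}--\eqref{eqn:cp2.6.5} you cite are not needed at that point (though their consequence \eqref{eqn:cp2.7.6} is needed elsewhere, e.g.\ in the round trip $\Psi\circ\Phi=1$). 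Your route buys the cylinder immediately; for tensoredness you could then simply cite Theorem \ref{thm:cp2.7.5}(1) rather than your somewhat vague reduction to ordinary naturality of $\phi_{K,X,-}$, since ordinary naturality is strictly weaker than $\V$-naturality of $\underline\phi$ and the bridge is exactly \eqref{eqn:cp.1.1} plus the associativity \eqref{eqn:cp2.4.2}, which is what the proof of Theorem \ref{thm:cp2.7.5}(1) supplies.

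In part (2) your construction coincides with the paper's (the same $\phi$ via \eqref{eqn:cp2.7.32}, the same Yoneda definitions of $a$ and $l$ via \eqref{eqn:cp2.1.28} and \eqref{eqn:cp2.7.31}, pentagon reduced to the coherence \eqref{eqn:cp2.2.1} of $\underline\pi$), but two steps you compress are genuinely substantive and should be written out. First, to transport the pentagon edge $(1_K\otimes a_{L,M,X})^*$ through the Yoneda identifications one needs the \emph{enriched} compatibility \eqref{eqn:cp2.7.21}, not merely the set-level defining square \eqref{eqn:cp2.1.28}, because at that stage the outer variable $K$ is still an object of $\V$; the paper establishes \eqref{eqn:cp2.7.21} by a nontrivial diagram chase through \eqref{eqn:cp2.7.7} and \eqref{eqn:cp2.7.5}, and ``both composites represent the same map'' does not bypass it. Second, in the round trip $\Phi\circ\Psi=1$ your appeal to the essential-uniqueness statements (Propositions \ref{pro:esht.1.1} and \ref{pro:esht.1.2}) does not by itself show that the rebuilt composition law from \eqref{eqn:cp2.7.11} agrees with the original $\ub$ of the $\V$-structure, which is part of the data being recovered; the paper's concrete identity $\uS(X,\epsilon_Y)\cdot\alpha_{\uS(X,Y),X}=1$ is the crux there. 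Both are fillable computations of the same transposition kind you already perform, so I regard the proposal as correct, with these steps needing explicit proofs rather than citations to uniqueness.
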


From now on, we will consider categories with $\V$-structure, cylinder and path. So, as we mentioned earlier,  we will need the symmetry of $\V$.

\begin{definition}
  If $\V$ is a symmetric monoidal category and $\S$ is a category with $\V$-structure, there is a $\V$-structure on $\S^{\op}$ such that the corresponding $\V$-category is the opposite of the $\V$-category associated to $\S$.
  We will call it the opposite $\V$-structure of $\S$.
  So, the opposite $\V$-structure of $\S$ consists of
  \begin{enumerate}
    \item a functor
        \begin{equation}
          \uS^{\op}(-,-):\S\times\S^{\op}\rightarrow \V
        \end{equation}
        defined by a composition
        \begin{equation}
          \S\times\S^{\op} \rightarrow\S^{\op}\times\S \xrightarrow{\uS(-,-)}\V,
        \end{equation}
    \item for every $X,Y,Z\in\ob\S$, a morphism
        \begin{equation}
          \ub^{\op}_{X,Y,Z}:\uS^{\op}(Y,Z)\otimes\uS^{\op}(X,Y) \rightarrow\uS^{\op}(X,Z)
        \end{equation}
        of $\V$ defined by a composition
        \begin{equation}
          \uS^{\op}(Y,Z)\otimes\uS^{\op}(X,Y) \xrightarrow{c}
          \uS^{\op}(X,Y)\otimes\uS^{\op}(Y,Z) \xrightarrow{\ub_{Z,Y,X}}
          \uS^{\op}(X,Z)
        \end{equation}
        where $c$ is \eqref{eqn:cp2.2.19} and
    \item a natural isomorphism
        \begin{equation}
          \varphi^{\op}:\S^{\op}(-,-)\xrightarrow{\cong}\V( I ,\uS^{\op}(-,-))
        \end{equation}
        defined by
        \begin{equation}
        \label{eqn:cp2.1.34}
          \varphi^{\op}_{X,Y}=\varphi_{Y,X}
        \end{equation}
        where $X,Y\in\ob\S$.
  \end{enumerate}
\end{definition}

\begin{definition}[cf.~Definition  3 in section 2.1 of \cite{quillen-67}]
\label{def:cp2.1.28}
  Let $\V$ be a closed symmetric monoidal category.
  Let $\S$ be a category with $\V$-structure. We say that $\S$ has path if $\S^{\op}$ with the opposite $\V$-structure of $\S$ has cylinder.
  So, a $K$-path of $X$ consists of
  \begin{enumerate}
    \item an object
        \begin{equation}
          K\pitchfork X
        \end{equation}
        of $\S$,
    \item a morphism
        \begin{equation}
          \beta_{K,X}:K\rightarrow \uS(K\pitchfork X,X)
        \end{equation}
        of $\V$, and
    \item for every $Y\in \ob\S$, an isomorphism
        \begin{equation}
          \underline\psi_{K,X,Y}:\uS(Y,K\pitchfork X)
            \rightarrow \uV(K,\uS(Y,X))
        \end{equation}
        of $\V$ such that the following diagram commutes.
        \begin{equation}
        \label{eqn:cp2.1.25}
          \xymatrix{
            \uS(Y,K\pitchfork X)\otimes K
              \ar[dd]_{\underline\psi_{K,X,Y}\otimes 1}
              \ar[rr]^(0.44)
                {1\otimes\beta_{K,X}}
              && \uS(Y,K\pitchfork X)\otimes \uS(K\pitchfork X,X)
              \ar[d]^{c}
            \\
              && \uS(K\pitchfork X,X)\otimes \uS(Y,K\pitchfork X)
              \ar[d]^{\ub}
            \\
            \uV(K, \uS(Y,X))\otimes K
              \ar[rr]_(0.58){\epsilon_{\uS(Y,X)}}
                && \uS(Y,X)}
        \end{equation}
  \end{enumerate}
\end{definition}

In Remark \ref{rem:cp2.1.4} we mentioned that, if $\V$ is closed, $\V$ has a $\V$-structure with cylinder. If $\V$ is also symmetric, then $\V$ has path too: let $K,L\in\ob\V$. Then $K\pitchfork L=\uV(K,L)$, $\beta_{K,L}=\pi(\epsilon_L\cdot c_{K,K\pitchfork L})$ where $\epsilon_L$ is \eqref{eqn:elno.5.3} and $c_{K,K\pitchfork L}$ is \eqref{eqn:cp2.2.19},
and $\underline\psi_{K,L,M}=\underline\phi_{K,M,L}\cdot c_{M,K}^*\cdot\underline\phi_{M,K,L}^{-1}$.
See the example following Definition 3 in section 2.1 of \cite{quillen-67}.

\begin{definition}
  Let $\V$ be a monoidal category.
  We say that a category $\S$ is a $\V$-comodule if $\S^{\op}$ is a $\V$-module. So, a $\V$-comodule structure on $\S$ consists of
  \begin{enumerate}
    \item a functor
        \begin{equation}
          -\pitchfork -:\V\times\S^{\op}\rightarrow \S^{\op},
        \end{equation}
    \item for every $K,L\in\ob\V$ and $X\in\ob\S$, an isomorphism
        \begin{equation}
          a^{\op}_{K,L,X}:(K\otimes L)\pitchfork X\xleftarrow{\cong} K\otimes(L\pitchfork X)
        \end{equation}
        of $\S$ natural in $K,L$ and $X$, and
    \item for every $X\in\ob\S$, an isomorphism
        \begin{equation}
          l^{\op}_X:I \pitchfork X\xleftarrow{\cong} X
        \end{equation}
        of $\S$ natural in $X$
  \end{enumerate}
  such that
  \begin{enumerate}
  \setcounter{enumi}{3}
    \item for every $K,L,M\in\ob\V$ and $X\in\ob\S$, the following diagram in $\S$ commutes
        \begin{equation}
        \label{eqn:cp2.1.42}
          \xymatrix@C=1.5em{
           ((K\otimes L)\otimes M)\pitchfork X
           & (K\otimes L)\pitchfork(M\pitchfork X)\ar[l]_{a^{\op}} & K\pitchfork(L\pitchfork(M\pitchfork X))\ar[l]_{a^{\op}}\ar[d]^{1\pitchfork a^{\op}} \\
           (K\otimes (L\otimes M))\pitchfork X
           \ar[u]^{a\pitchfork 1}
           && K\pitchfork((L\otimes M)\pitchfork X)\ar[ll]^{a^{\op}}}
        \end{equation}
        and
    \item for every $K\in\ob\V$ and $X\in\ob\S$, the following diagram in $\S$ commutes.
        \begin{equation}
        \label{eqn:cp2.1.43}
          \xymatrix{
            (K\otimes I )\pitchfork X  && K\pitchfork (I \pitchfork X)\ar[ll]_{a^{\op}} \\
            & K\pitchfork X \ar[ur]_{1\pitchfork l^{\op}} \ar[ul]^{r\pitchfork 1}&}
        \end{equation}
  \end{enumerate}
\end{definition}

\begin{definition}[cf.~Definition 4.1.12 in \cite{hovey-99}]
  Let $\V$ be a monoidal category.
  We say that a $\V$-comodule $\S$ is cotensor-closed  if
  $\S^{\op}$ is tensor-closed.
  So a $\V$-comodule $\S$ is cotensor-closed if
  \begin{enumerate}
    \item there exists a functor
        \begin{equation}
          \uS^{\op}(-,-):\S\times\S^{\op}\rightarrow \V
        \end{equation}
        and
    \item for every $K\in\ob\V$ and $X,Y\in\ob\S$, there exists an isomorphism
        \begin{equation}
          \psi_{K,X,Y}:\S(Y, K\pitchfork X)
            \xrightarrow{\cong} \V(K,\uS(Y,X))
        \end{equation}
        of $\V$ natural in $K,X,Y$.
  \end{enumerate}
\end{definition}

Suppose that $\S$ and $\S^{\op}$ are tensor-closed $\V$-modules. Considering the definition of the simplicial model category of Quillen, it seems reasonable to ask the $\V$-structure of $\S^{\op}$ to be the opposite $\V$-structure of $\S$.

\begin{definition}
  Let $\V$ be a monoidal category.
  We say that a category $\S$ is closed $\V$-bimodule if $\S$ and $\S^{\op}$ are tensor-closed $\V$-module and the $\V$-structure of $\S^{\op}$ is the opposite $\V$-structure of $\S$.
\end{definition}

Our last result shows that if $\V$ is closed and symmetric, given a closed $\V$-bimodule $\S$, the associativity natural isomorphism $a^{\op}$ and the unit natural isomorphism $l^{\op}$ of $\S^{\op}$ become redundant and can be recovered from the rest of the structure, the closed $\V$-module structure of Hovey.

In the following definition, we do not assume that $\V$ is closed nor symmetric.

\begin{definition}[cf.~Definition 4.1.13 in \cite{hovey-99}]
  Let $\V$ be a monoidal category.
  Suppose that $\S$ is a tensor-closed left $\V$-module.
  We say that $\S$ is a closed $\V$-module if
  \begin{enumerate}
    \item there exists a functor
        \begin{equation}
        \label{eqn:cp2.1.14}
          -\pitchfork - :\V\times\S^{\op}\rightarrow \S^{\op}
        \end{equation}
        and
    \item for every $K\in\ob\V$ and $X,Y\in\ob\S$, there exists an isomorphism
        \begin{equation}
        \label{eqn:cp2.1.15}
          \psi_{K,X,Y}:\S(Y,K\pitchfork X)
            \xrightarrow{\cong} \V(K,\uS(Y,X))
        \end{equation}
        of $\V$ natural in $K,X,Y$.
  \end{enumerate}
\end{definition}

The previous definition is different from  but equivalent to that of Hovey. It is more convenient for us to prove the following theorem.

\begin{theorem}
\label{thm:cp2.1.11}
  Let $\V$ be a closed symmetric monoidal category.
  Let $\S$ be a category.
  If $\S$ is a closed $\V$-module, then there exists an unique closed $\V$-bimodule structure on $\S$ extending the closed $\V$-module structure of $\S$.
  In particular, there is a bijection from the set of $\V$-structures with cylinder and path on $\S$ to the set of closed $\V$-module structures on $\S$.
\end{theorem}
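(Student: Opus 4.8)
The plan is to reduce the statement to Theorem \ref{thm:cp2.6.9}, applied now to \emph{both} $\S$ and $\S^{\op}$. The key observation is that a closed $\V$-bimodule structure extending the given closed $\V$-module structure amounts to nothing more than a tensor-closed $\V$-module structure on $\S^{\op}$ whose tensor is the given functor $-\pitchfork-$, whose associated $\V$-structure (Proposition \ref{pro:cp2.7.12}) is the opposite $\V$-structure of $\S$, and whose adjunction extends the given $\psi$. Indeed, once the opposite $\V$-structure is prescribed, the internal hom of $\S^{\op}$ is forced to be $\uS^{\op}(X,Y)=\uS(Y,X)$ and the adjunction $\phi^{\op}_{K,X,Y}\colon\S^{\op}(K\pitchfork X,Y)\xrightarrow{\cong}\V(K,\uS^{\op}(X,Y))$ is forced to be $\psi_{K,X,Y}$; the only genuinely new data are the associativity $a^{\op}$ and the unit $l^{\op}$ making $\S^{\op}$ a $\V$-module. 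So the first assertion of the theorem is equivalent to showing that these exist and are unique.

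First I would manufacture, out of the bare datum $(-\pitchfork-,\psi)$, a full set of path data for $\S$ in the sense of Definition \ref{def:cp2.1.28}, i.e. a cylinder on $\S^{\op}$ carrying the opposite $\V$-structure. The object part is $K\pitchfork X$ and the morphism $\beta_{K,X}\colon K\to\uS(K\pitchfork X,X)$ is the unit of $\psi$, namely $\beta_{K,X}=\psi_{K,X,K\pitchfork X}(\id_{K\pitchfork X})$. The enriched isomorphism $\underline\psi_{K,X,Y}\colon\uS(Y,K\pitchfork X)\to\uV(K,\uS(Y,X))$ I would produce by the Yoneda lemma in $\V$: using the enriched adjunction $\underline\phi$ of $\S$ (available by Theorem \ref{thm:cp2.7.5} and the cited discussion in \cite{kelly-05}), the set-level $\psi$, and the symmetry $c$ of $\V$, one has for every $M\in\ob\V$ a chain of natural bijections
\begin{align*}
  \V(M,\uS(Y,K\pitchfork X))
  &\cong\S(M\otimes Y,K\pitchfork X)
  \cong\V(K,\uS(M\otimes Y,X))\\
  &\cong\V(K,\uV(M,\uS(Y,X)))
  \cong\V(M,\uV(K,\uS(Y,X))),
\end{align*}
where the last step uses $M\otimes K\cong K\otimes M$; this determines $\underline\psi_{K,X,Y}$ uniquely. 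It then remains to verify the cylinder-coherence square \eqref{eqn:cp.1.1} for $\S^{\op}$, which—after substituting $\ub^{\op}=\ub\cdot c$ and $\uS^{\op}(-,-)=\uS(-,-)$ flipped—is exactly the path-coherence square \eqref{eqn:cp2.1.25}, and I expect this to follow by unwinding the naturality of $\psi$ together with the triangle identity for the unit $\beta$.

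Once the path data are in hand, I would invoke Theorem \ref{thm:cp2.6.9} for the category $\S^{\op}$ equipped with the opposite $\V$-structure. These path data constitute a $\V$-structure with cylinder on $\S^{\op}$, so by the bijection of part (2) of that theorem they correspond to a \emph{unique} tensor-closed $\V$-module structure on $\S^{\op}$ inducing this opposite $\V$-structure. That tensor-closed structure supplies precisely the missing $a^{\op}$ and $l^{\op}$, and its uniqueness is exactly the uniqueness of the closed $\V$-bimodule extension. Conversely, any closed $\V$-bimodule restricts to a closed $\V$-module, and the passage from $(-\pitchfork-,\psi)$ to the path data is invertible: one recovers the set-level $\psi$ from the cylinder on $\S^{\op}$ by applying $\V(I,-)$ together with $\varphi$ and the isomorphism $\V(I,\uV(K,A))\cong\V(K,A)$. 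Hence the two constructions are mutually inverse, which settles existence and uniqueness.

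Finally, for the displayed bijection I would compose the equivalences. By Theorem \ref{thm:cp2.6.9}(2) the $\V$-structures with cylinder on $\S$ correspond bijectively to the tensor-closed $\V$-module structures on $\S$; adjoining the path datum and using the equivalence just established between path data and the extra datum $(-\pitchfork-,\psi)$, the $\V$-structures with cylinder and path on $\S$ correspond bijectively to the closed $\V$-module structures on $\S$. I expect the main obstacle to be the two symmetry-laden verifications of the second and third paragraphs—the Yoneda construction of the enriched $\underline\psi$ and, above all, the commutativity of the coherence square \eqref{eqn:cp.1.1}/\eqref{eqn:cp2.1.25}—since these are precisely where the symmetry of $\V$ is forced upon us and where the interchange of the two actions $-\otimes-$ and $-\pitchfork-$ is encoded.
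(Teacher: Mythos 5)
Your proposal is correct, but it is organized quite differently from the paper's proof. The paper never passes through enriched path data in its existence step: it defines $a^{\op}$ and $l^{\op}$ directly, by the Yoneda lemma, from the set-level diagrams \eqref{eqn:cp2.8.1} and \eqref{eqn:cp2.8.2} (composites of $\phi$, $\psi$, the symmetry $c$ and $a$), and then verifies the comodule coherences \eqref{eqn:cp2.1.42} and \eqref{eqn:cp2.1.43} by hand, reducing them via these Yoneda definitions to the hexagon \eqref{eqn:elno.4.12}, the module pentagon \eqref{eqn:cp2.6.4}, and the triangles \eqref{eqn:elno.4.13} and \eqref{eqn:cp2.7.6}; uniqueness comes from Lemma \ref{lem:cp2.8.1}, Lemma \ref{lem:cp2.8.2} and Lemma \ref{lem:cp2.8.3}, which show that \emph{any} bimodule extension must satisfy \eqref{eqn:cp2.8.1} and \eqref{eqn:cp2.8.2}. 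You instead manufacture the enriched path datum $(K\pitchfork X,\beta_{K,X},\underline\psi_{K,X,Y})$ on $\S^{\op}$ out of $\psi$ alone, and then invoke Theorem \ref{thm:cp2.6.9}(2) for $\S^{\op}$, which hands you $a^{\op}$, $l^{\op}$, their coherence axioms, and uniqueness in one stroke; this buys you freedom from re-verifying the pentagon and triangle for $\S^{\op}$, at the cost of having to prove the square \eqref{eqn:cp2.1.25} for your Yoneda-built $\underline\psi$. Note that \eqref{eqn:cp2.1.25} is equivalent to the identity $\pi^{-1}(\underline\psi_{K,X,Y})=\ub\cdot c\cdot(1\otimes\beta_{K,X})$, and establishing it for the composite defined by your chain of bijections is essentially the computation the paper carries out in Lemma \ref{lem:cp2.8.1} (this is where \eqref{eqn:elno.4.12} enters), so the two routes converge on the same core diagram chase and your flagging of that step as the crux is accurate. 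Two points worth tightening: (i) the opposite $\V$-structure alone does not force $\phi^{\op}=\psi$ — it only pins down the composite $\varphi^{\op}_{X,Y}=\phi^{\op}_{I,X,Y}\cdot(l^{\op}_X)^*$ — so your opening reduction should lean on the word ``extending'' in the statement, which is what fixes $\psi$; (ii) for uniqueness you should say explicitly why the cylinder datum on $\S^{\op}$ associated to an \emph{arbitrary} extension is independent of the unknown $a^{\op}$ and $l^{\op}$: its unit is $\beta_{K,X}=\psi_{K,X,K\pitchfork X}(1_{K\pitchfork X})$, and then \eqref{eqn:cp2.1.25} determines $\underline\psi$ from $\beta$ together with the composition $\ub\cdot c$ of the opposite $\V$-structure, which the bimodule condition has already fixed; only then does injectivity of the bijection in Theorem \ref{thm:cp2.6.9}(2) yield uniqueness of the extension.
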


We close this section with one remark on commutative diagrams in this note.
For simplicity, when we draw commutative diagrams, we generally omit subdiagrams representing coherence as well as corresponding parenthesis.

\section{Review on Monoidal Category}

In this section, we recall some properties of monoidal categories and set some notations. Our main reference is the chapter 1 of \cite{kelly-05}.

We will denote a monoidal category and its underlying category with the same symbol. So a monoidal category $\V$ consists of a category $\V$, a functor $\otimes:\V\times\V\rightarrow\V$, an object $I$ of $\V$ and natural transformations $a_{X,Y,Z}:(X\otimes Y)\otimes Z\rightarrow X\otimes(Y\otimes Z)$, $l_X:I\otimes X\rightarrow X$, $r_X:X\otimes I\rightarrow X$ subject to two coherence axioms: the commutativity of
\begin{equation}
\label{eqn:elno.1.1}
  \xymatrix{
   ((W\otimes X)\otimes Y)\otimes Z\ar[r]^{a} \ar[d]_{{a}\otimes 1}& (W\otimes X)\otimes(Y\otimes Z)\ar[r]^{a} & W\otimes(X\otimes(Y\otimes Z))\\
   (W\otimes (X\otimes Y))\otimes Z\ar[rr]_{a} && W\otimes((X\otimes Y)\otimes Z)\ar[u]_{1\otimes{a}}}
\end{equation}
and
\begin{equation}
\label{eqn:elno.1.2}
  \xymatrix{
    (X\otimes I)\otimes Y \ar[rr]^{a} \ar[dr]_{r\otimes 1} && X\otimes (I\otimes Y)\ar[dl]^{1\otimes l}\\
    & X\otimes Y&}
\end{equation}

From now on, we fix a monoidal category $\V=(\V,\otimes,I,a,l,r)$.

\begin{lemma}
  Let $\V$ be a monoidal category. Then
  \begin{equation}
    r_I=l_I:I\otimes I\rightarrow I
  \end{equation}
  holds and the diagram
  \begin{equation}
  \label{eqn:cp2.2.6}
    \xymatrix{
      (I\otimes X)\otimes Y \ar[rr]^a \ar[dr]_{l\otimes 1} && I\otimes (X\otimes Y)\ar[dl]^{l}\\
      & X\otimes Y&}
  \end{equation}
  commutes for every $X,Y\in\ob\V$.
\end{lemma}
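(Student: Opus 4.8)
The statement is the classical unit-coherence lemma for monoidal categories (due to Kelly and Mac Lane), so the plan is to deduce it from the only two axioms available, the pentagon \eqref{eqn:elno.1.1} and the triangle \eqref{eqn:elno.1.2}, together with the naturality of $a$, $l$, $r$ and the functoriality of $\otimes$. The one structural fact I would isolate at the outset is a \emph{cancellation principle}: since $l_X:I\otimes X\rightarrow X$ is an isomorphism natural in $X$, the functor $I\otimes(-)$ is naturally isomorphic to the identity and hence faithful, so $1_I\otimes f=1_I\otimes g$ forces $f=g$; the same argument applied to $r$ shows $-\otimes I$ is faithful, so $f\otimes 1_I=g\otimes 1_I$ forces $f=g$. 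These two cancellations are what let one strip away the coherence isomorphisms that clutter the diagrams at the end of the chase.

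I would prove the triangle \eqref{eqn:cp2.2.6}, that is $l_{X\otimes Y}\circ a_{I,X,Y}=l_X\otimes 1_Y$, \textbf{first}, since $r_I=l_I$ will then fall out of it. To obtain it I instantiate the pentagon \eqref{eqn:elno.1.1} at a quadruple two of whose entries are the unit, namely $(W,X,Y,Z)=(I,I,X,Y)$, and rewrite its edges using two instances of the triangle axiom \eqref{eqn:elno.1.2} together with the naturality of $a$ in its first variable at the arrow $l_X:I\otimes X\rightarrow X$. After these substitutions every composite in the resulting diagram is built from a single occurrence of $a_{I,X,Y}$ (or of $l$) pre- and post-composed with one common coherence isomorphism; cancelling that isomorphism by the principle above yields the desired identity.

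With \eqref{eqn:cp2.2.6} established, $r_I=l_I$ follows by a short computation. Instantiating the triangle axiom \eqref{eqn:elno.1.2} at $X=Y=I$ gives $(1_I\otimes l_I)\circ a_{I,I,I}=r_I\otimes 1_I$; instantiating the just-proved \eqref{eqn:cp2.2.6} at $X=Y=I$ gives $l_{I\otimes I}\circ a_{I,I,I}=l_I\otimes 1_I$; and naturality of $l$ at the arrow $l_I:I\otimes I\rightarrow I$ gives $l_{I\otimes I}=1_I\otimes l_I$. Substituting the third identity into the second and comparing with the first yields $l_I\otimes 1_I=(1_I\otimes l_I)\circ a_{I,I,I}=r_I\otimes 1_I$, and cancelling $-\otimes 1_I$ produces $r_I=l_I$.

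The main obstacle is the diagram chase for \eqref{eqn:cp2.2.6}: the bookkeeping of exactly which associators and unitors appear on each edge of the pentagon instance, and the verification that after the substitutions the two sides genuinely differ only by a single cancellable coherence isomorphism. Once the triangle is in place, both the cancellation steps and the deduction of $r_I=l_I$ are routine.
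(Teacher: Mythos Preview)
Your outline is correct and is exactly the classical Kelly argument; the paper itself does not spell out the proof but simply cites Proposition~1.1 of Joyal--Street, whose argument is the one you have sketched. One small imprecision: in the chase for \eqref{eqn:cp2.2.6} you will need naturality of $a$ in its \emph{first} variable at $r_I:I\otimes I\to I$ and in its \emph{second} variable at $l_X:I\otimes X\to X$ (not ``first variable at $l_X$''); with that correction the substitutions and cancellations go through as you describe.
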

\begin{proof}
 It follows from \eqref{eqn:elno.1.1} and \eqref{eqn:elno.1.2}.
 See Proposition 1.1 in \cite{joyal-street-93} for the detail.
\end{proof}

\begin{definition}
  We say that a monoidal category $\V$ is closed if,
  for every $Y\in\ob\V$, the functor $-\otimes Y:\V\rightarrow\V$ has a right adjoint
  \begin{equation}
  \label{eqn:cp2.2.25}
    \uV(Y,-):\V\rightarrow\V.
  \end{equation}
  We denote by $\pi$ the natural isomorphism associated with this adjunction:
  \begin{equation}
  \label{eqn:elno.5.1}
    \pi_{X,Y,Z}:\V(X\otimes Y,Z)\xrightarrow{\cong} \V(X,\uV(Y,Z))
  \end{equation}
  We denote the unit and the counit of $\pi_{-,Y,-}$ with
  $\eta$ and $\epsilon$:
  \begin{align}
    \label{eqn:cp2.2.2}
    \eta_X &: X\rightarrow \uV(Y,X\otimes Y) \\
    \label{eqn:elno.5.3}
    \epsilon_Z &: \uV(Y,Z)\otimes Y\rightarrow Z
  \end{align}
\end{definition}

\begin{lemma}
  Let $\V$ be a closed monoidal category. Then for every $f\in\V(X,Y)$ and $Z\in\ob\V$,
  the following diagram commutes.
  \begin{equation}
  \label{eqn:cp2.2.14}
    \xymatrix{
      \uV(Y,Z)\otimes X \ar[r]^{1\otimes f} \ar[d]_{\uV(f,Z)\otimes 1} & \uV(Y,Z)\otimes Y \ar[d]^{\epsilon^Y_Z}\\
      \uV(X,Z)\otimes X \ar[r]_{\epsilon^X_Z} & Z}
  \end{equation}
\end{lemma}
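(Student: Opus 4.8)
The plan is to reduce the commutativity of \eqref{eqn:cp2.2.14} to the triangle identity of the closed-structure adjunction, once the contravariant functoriality $\uV(f,Z)$ has been identified as a transpose. First I would recall the standard relation between the bijection $\pi$ of \eqref{eqn:elno.5.1} and the counit $\epsilon$ of \eqref{eqn:elno.5.3}: for the adjunction $-\otimes X\dashv\uV(X,-)$ and any $h\in\V(W,\uV(X,Z))$ one has $\pi_{W,X,Z}^{-1}(h)=\epsilon^X_Z\cdot(h\otimes 1_X)$, equivalently $g=\epsilon^X_Z\cdot(\pi_{W,X,Z}(g)\otimes 1_X)$ for every $g\in\V(W\otimes X,Z)$. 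This is just the expression of the counit as the image of the identity together with the naturality of $\pi$, and it is the only adjunction fact I will need.

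Next I would spell out the definition of $\uV(f,Z)$. For $f\in\V(X,Y)$ the morphism $\uV(f,Z):\uV(Y,Z)\rightarrow\uV(X,Z)$ is, following the internal-hom bifunctor construction in chapter 1 of \cite{kelly-05}, the transpose under $\pi_{\uV(Y,Z),X,Z}$ of the composite $\uV(Y,Z)\otimes X\xrightarrow{1\otimes f}\uV(Y,Z)\otimes Y\xrightarrow{\epsilon^Y_Z}Z$; that is, $\uV(f,Z)=\pi_{\uV(Y,Z),X,Z}\bigl(\epsilon^Y_Z\cdot(1\otimes f)\bigr)$. With this in hand the lemma is immediate: applying the triangle identity of the first paragraph with $W=\uV(Y,Z)$ and $g=\epsilon^Y_Z\cdot(1\otimes f)$ gives
\[
  \epsilon^Y_Z\cdot(1\otimes f)=\epsilon^X_Z\cdot\bigl(\pi_{\uV(Y,Z),X,Z}(\epsilon^Y_Z\cdot(1\otimes f))\otimes 1\bigr)=\epsilon^X_Z\cdot(\uV(f,Z)\otimes 1),
\]
and the two outer sides are precisely the top-right and the left-bottom composites of the square \eqref{eqn:cp2.2.14}.

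I do not expect any real obstacle here, since the content is entirely formal; the single point requiring care is the identification of $\uV(f,Z)$ with the transpose displayed above. If one prefers not to invoke that description of the contravariant action directly, the same identity follows from the naturality of $\pi_{X,Y,Z}$ in the variable $Y$ along which $f$ acts, because the bifunctoriality of $\uV$ in its first slot is set up exactly so as to make $\pi$ natural there, and the resulting naturality square, read through $\pi$ and $\pi^{-1}$, is nothing but \eqref{eqn:cp2.2.14}. Either route yields the commutativity with no computation beyond the one triangle identity.
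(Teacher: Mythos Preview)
Your proof is correct and is essentially an unpacking of the paper's one-line argument. The paper simply says the square commutes by the naturality of $\pi$ in \eqref{eqn:elno.5.1} in its first two variables, which is exactly your second route; your first route just makes explicit that naturality in the middle variable is encoded by the transpose description of $\uV(f,Z)$ and then reads off the square via $\pi^{-1}(h)=\epsilon\cdot(h\otimes 1)$.
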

\begin{proof}
  It follows from the naturality of \eqref{eqn:elno.5.1} in $X$ and $Y$.
\end{proof}

\begin{lemma}
  Let $\V$ be a closed monoidal category.
  Then the parametrized family $\{\uV(Y,-)\}_{Y\in\ob\V}$ of functors in \eqref{eqn:cp2.2.25} extends to a bifunctor
  \begin{equation}
  \label{eqn:cp2.2.15}
    \uV(-,-):\V^{\op}\times\V\rightarrow\V
  \end{equation}
  such that $\pi_{X,Y,Z}$ in \eqref{eqn:elno.5.1} is natural in $Y$.
\end{lemma}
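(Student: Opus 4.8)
The plan is to run the standard ``adjunction with a parameter'' construction for the family $\{-\otimes Y\dashv\uV(Y,-)\}_{Y\in\ob\V}$; note that only the bifunctoriality of $\otimes$ and the adjunctions \eqref{eqn:elno.5.1} are needed, so the coherence data $a,l,r$ play no role. First I would define the action of $\uV$ on morphisms in the contravariant variable. Given $g\in\V(Y',Y)$ and $Z\in\ob\V$, set
\[
  \uV(g,Z):=\pi_{\uV(Y,Z),Y',Z}\bigl(\epsilon^{Y}_{Z}\cdot(1_{\uV(Y,Z)}\otimes g)\bigr):\uV(Y,Z)\rightarrow\uV(Y',Z),
\]
where $\epsilon^{Y}_{Z}$ is the counit \eqref{eqn:elno.5.3} of the $Y$-adjunction. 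This formula is exactly the transpose forced on us by the requirement that $\pi$ be natural in $Y$, so there is no choice to make.

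The core computation is to verify this naturality: for every $f\in\V(X\otimes Y,Z)$,
\[
  \uV(g,Z)\cdot\pi_{X,Y,Z}(f)=\pi_{X,Y',Z}\bigl(f\cdot(1_X\otimes g)\bigr).
\]
I would check this by passing to transposes under $\pi^{-1}_{X,Y',Z}(v)=\epsilon^{Y'}_{Z}\cdot(v\otimes 1_{Y'})$. Writing $u=\pi_{X,Y,Z}(f)$, the defining formula gives $\epsilon^{Y'}_{Z}\cdot(\uV(g,Z)\otimes 1_{Y'})=\epsilon^{Y}_{Z}\cdot(1_{\uV(Y,Z)}\otimes g)$, and then the interchange law for $\otimes$ together with $\epsilon^{Y}_{Z}\cdot(u\otimes 1_Y)=f$ collapses the left-hand transpose to $f\cdot(1_X\otimes g)$. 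The only ingredients are the counit description of $\pi^{-1}$, the naturality of $\pi$ in $X$, and the functoriality of $\otimes$.

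With naturality in $Y$ established, the remaining properties follow formally by Yoneda uniqueness, without further computation. Contravariant functoriality in the first variable, i.e.\ $\uV(\id_Y,Z)=\id_{\uV(Y,Z)}$ and $\uV(g\cdot g',Z)=\uV(g',Z)\cdot\uV(g,Z)$, is read off by pasting the commuting naturality squares for $g$ and $g'$ and cancelling the invertible horizontals $\pi$, using $(1\otimes g)\cdot(1\otimes g')=1\otimes(g\cdot g')$. Finally, the bifunctor interchange law $\uV(Y',h)\cdot\uV(g,Z)=\uV(g,Z')\cdot\uV(Y,h)$ for $g\in\V(Y',Y)$ and $h\in\V(Z,Z')$ follows because both composites, when post-applied to $\pi_{X,Y,Z}(f)$, yield $\pi_{X,Y',Z'}\bigl(h\cdot f\cdot(1_X\otimes g)\bigr)$: this uses the just-proved naturality in $Y$ and the naturality of $\pi$ in $Z$ coming from the single-parameter adjunction. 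As $f$ and $X$ vary this identifies the two morphisms by the (contravariant) Yoneda lemma, so $\uV(-,-)$ is a genuine bifunctor and $\pi_{X,Y,Z}$ is natural in all three variables.

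I expect no serious obstacle: the content is entirely the parameter theorem for adjunctions specialized to $-\otimes Y$. The only point demanding care is bookkeeping of variance---the first argument is contravariant, and each parameter $Y$ carries its own counit $\epsilon^{Y}_{Z}$---so I would keep the superscripts explicit, as in \eqref{eqn:cp2.2.14}, throughout the one genuine calculation in the second paragraph.
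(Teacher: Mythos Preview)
Your proposal is correct and is essentially the same as the paper's: the paper simply cites Theorem~IV.7.3 in Mac Lane, which is precisely the ``adjunctions with a parameter'' theorem you spell out. Your argument is exactly the standard proof of that theorem specialized to $-\otimes Y\dashv\uV(Y,-)$, so there is no substantive difference.
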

\begin{proof}
  Theorem 4.7.3 in \cite{maclane-98}.
\end{proof}

\begin{definition}
  Let $\V$ be a closed monoidal category.
  Then for every $X,Y,Z\in\ob\V$, we define a morphism
  \begin{equation}
  \label{eqn:cp2.2.16}
    \ub_{X,Y,Z}:\uV(Y,Z)\otimes\uV(X,Y)\rightarrow\uV(X,Z)
  \end{equation}
  of $\V$ by
  \begin{equation}
  \label{eqn:cp2.2.4}
    \ub_{X,Y,Z}=\pi(\epsilon_Z\cdot (1\otimes\epsilon_Y)\cdot a).
  \end{equation}

\end{definition}

\begin{definition}
\label{def:cp2.2.6}
  Let $\V$ be a closed monoidal category.
  Then we denote by $\varpi$ the natural isomorphism whose component $\varpi_{X,Y}$ at $(X,Y)$ is a composition
  \begin{equation}
  \label{eqn:cp2.2.17}
    \V(X,Y)\xrightarrow{l^*_X} \V(I\otimes X, Y)\xrightarrow{\pi_{I,X,Y}} \V(I,\uV(X,Y)).
  \end{equation}
\end{definition}

The next proposition says that if a monoidal category $\V$ is closed,
$\V$ has a $\V$-structure.
Later in section \ref{sec:cp2.tcvm}, we will prove a more general fact, Proposition \ref{pro:cp2.7.12}.
We note that the role of $\varphi$ is played by $\varpi$.

\begin{proposition}
\label{pro:cp2.3.5}
  Let $\V$ be a closed monoidal category.
  Then $\uV(-,-)$ in \eqref{eqn:cp2.2.15}, $\ub_{X,Y,Z}$ in \eqref{eqn:cp2.2.16} and $\varpi$ in Definition \ref{def:cp2.2.6} form a $\V$-structure of $\V$.
\end{proposition}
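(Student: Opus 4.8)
The plan is to verify directly that the triple $(\uV(-,-),\ub,\varpi)$ satisfies the six clauses of Definition \ref{def:cp2.4.5} with $\S=\V$. Clauses (1) and (2) are immediate: $\uV(-,-)$ is the bifunctor \eqref{eqn:cp2.2.15} and $\ub$ is \eqref{eqn:cp2.2.16}. For clause (3) I would observe that $\varpi_{X,Y}=\pi_{I,X,Y}\cdot l_X^*$ is a composite of the isomorphism $l_X^*$ (precomposition with $l_X$, natural in $X$ by naturality of $l$ and obviously in $Y$) and the adjunction isomorphism $\pi_{I,X,Y}$, which is natural in the target variable $Y$ and, by \eqref{eqn:cp2.2.15}, in the exponent variable $X$. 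Hence $\varpi$ is a natural isomorphism $\V(-,-)\xrightarrow{\cong}\V(I,\uV(-,-))$. It remains to check the coherence clauses (4), (5), (6).

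The uniform device I would use is this: to prove an equality of two morphisms with codomain $\uV(A,B)$, it suffices to prove it after applying the bijection $\pi_{-,A,B}^{-1}$, i.e.\ after tensoring on the right with $A$ and postcomposing with the counit $\epsilon_B$. Two facts drive every reduction: the counit identity $\epsilon_B\cdot(\pi(g)\otimes 1)=g$ together with the formula $\pi^{-1}(\ub_{X,Y,Z})=\epsilon_Z\cdot(1\otimes\epsilon_Y)\cdot a$ read off from \eqref{eqn:cp2.2.4}, and the contravariant naturality $\pi^{-1}(h\cdot v)=\pi^{-1}(h)\cdot(v\otimes 1)$ in the first slot.

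For clause (5), transporting $\uV(f,Z)$ and the composite \eqref{eqn:cp2.4.22} through $\pi^{-1}_{-,X,Z}$: the left side becomes $\epsilon_Z\cdot(\uV(f,Z)\otimes 1)$, which by Lemma \eqref{eqn:cp2.2.14} equals $\epsilon_Z\cdot(1\otimes f)$; the right side unwinds, using $\varpi(f)=\pi(f\cdot l_X)$, the counit identity and naturality of $a$, to $\epsilon_Z\cdot(1\otimes f)\cdot(1\otimes l)\cdot a\cdot(r^{-1}\otimes 1)$, whose tail collapses to the identity by the triangle coherence \eqref{eqn:elno.1.2}. Clause (6) is entirely dual: the left side becomes $g\cdot\epsilon_Y$ by naturality of the counit in its covariant variable, while the right side reduces — via $\varpi(g)=\pi(g\cdot l_Y)$, the counit identity, naturality of $l$ and the coherence \eqref{eqn:cp2.2.6} — to the same morphism $g\cdot\epsilon_Y$.

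The main obstacle is clause (4), the associativity square \eqref{eqn:cp2.4.2}. I would again apply $\pi^{-1}_{-,X,W}$ to both composites $\ub_{X,Y,W}\cdot(\ub_{Y,Z,W}\otimes 1)$ and $\ub_{X,Z,W}\cdot(1\otimes\ub_{X,Y,Z})\cdot a$. Repeatedly substituting $\pi^{-1}(\ub)=\epsilon\cdot(1\otimes\epsilon)\cdot a$ and pushing the inner $\ub$'s inward with the naturality of $a$ and $\epsilon$, both composites should reduce to the single iterated evaluation $\epsilon_W\cdot(1\otimes\epsilon_Z)\cdot(1\otimes(1\otimes\epsilon_Y))$ on the fully right-associated object $\uV(Z,W)\otimes(\uV(Y,Z)\otimes(\uV(X,Y)\otimes X))$. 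The real content, and the bookkeeping I expect to be delicate, is reconciling the two resulting strings of associators; this is where the pentagon \eqref{eqn:elno.1.1} is needed, and care is required because $\V$ is not assumed symmetric, so no braiding may be invoked to rearrange the factors. Once the associators are matched by coherence, the equality follows and all six clauses hold.
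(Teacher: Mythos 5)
Your proof is correct and is essentially the paper's own argument: the paper does not prove Proposition \ref{pro:cp2.3.5} directly but obtains it as the special case $\S=\V$ of the more general Proposition \ref{pro:cp2.7.12}, whose proof uses exactly your devices --- transporting each required identity through the adjunction (tensoring with the argument and postcomposing with the counit), the formula $\pi^{-1}(\ub)=\epsilon\cdot(1\otimes\epsilon)\cdot a$ from \eqref{eqn:cp2.2.4} (in the module setting, \eqref{eqn:cp2.7.11}), the counit-naturality square \eqref{eqn:cp2.2.14} (there in its module form \eqref{eqn:cp2.7.9}) for clause (5), naturality of the counit in the covariant variable for clause (6), and the pentagon and triangle coherences (\eqref{eqn:cp2.6.4}, \eqref{eqn:cp2.6.5}, specializing to \eqref{eqn:elno.1.1}, \eqref{eqn:elno.1.2}) for clause (4). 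Your direct verification for $\S=\V$ is precisely the specialization of that proof, so no gap remains.
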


\begin{remark}
  In \cite{kelly-05}, the $\V$-structure on $\V$ is discussed under an additional assumption, namely, the symmetry. If we want to have a $\V$-functor $\uV(-,-):\V^{\op}\otimes\V\rightarrow\V$ make sense, we need the symmetry. However just to define an  ordinary functor $\uV(-,-):\V^{\op}\times\V\rightarrow\V$, we do not need the symmetry of $\V$.
  The latter functor is what we need to show that a closed monoidal category $\V$ is a category with $\V$-structure.
  This will be important for us later in proofs.
\end{remark}

\begin{definition}
  Let $\V$ be a closed monoidal category.
  We denote by $\underline{\pi}$ the natural isomorphism  whose component
  \begin{equation}
  \label{eqn:cp2.2.3}
    \underline{\pi}_{X,Y,Z}:\uV(X\otimes Y,Z)\xrightarrow{\cong} \uV(X,\uV(Y,Z))
  \end{equation}
  at $(X,Y,Z)$ is characterized by the following commutative diagram: let $W\in\ob\V$.
  \begin{equation}
  \label{eqn:cp2.2.1}
    \xymatrix{
      \V((W\otimes X)\otimes Y,Z)\ar[r]^\pi & \V(W\otimes X,\uV(Y, Z))\ar[r]^\pi &
      \V(W,\uV(X,\uV(Y,Z)))\\
      \V(W\otimes (X\otimes Y),Z)\ar[u]^{a^*} \ar[rr]_\pi && \V(W,\uV(X\otimes Y,Z))\ar[u]_{\underline{\pi}_{X,Y,Z}}}
  \end{equation}
\end{definition}

\begin{lemma}
\label{lem:cp2.2.10}
  Let $\V$ be a closed monoidal category.
  Then the following diagram commutes.
  \begin{equation}
    \xymatrix{
      \V(X\otimes Y,Z) \ar[rr]^(0.47){\varpi_{X\otimes Y, Z}} \ar[d]_{\pi_{X,Y,Z}} && \V(I,\uV(X\otimes Y, Z)) \ar[d]^{\V(I,\underline{\pi}_{X,Y,Z})}\\
      \V(X,\uV(Y,Z)) \ar[rr]_(0.47){\varpi_{X,\uV(Y,Z)}} && \V(I,\uV(X,\uV(Y,Z)))}
  \end{equation}
\end{lemma}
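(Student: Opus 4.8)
The plan is to verify commutativity by a direct element chase, reducing everything to the defining property \eqref{eqn:cp2.2.1} of $\underline{\pi}$, the coherence triangle \eqref{eqn:cp2.2.6}, and the naturality of $\pi$ in its first variable. Fix $f\in\V(X\otimes Y,Z)$. Unwinding the definition \eqref{eqn:cp2.2.17} of $\varpi$, the top-then-right composite sends $f$ to
\[
  \underline{\pi}_{X,Y,Z}\cdot\pi_{I,X\otimes Y,Z}(f\cdot l_{X\otimes Y}),
\]
while the left-then-bottom composite sends $f$ to
\[
  \pi_{I,X,\uV(Y,Z)}\bigl(\pi_{X,Y,Z}(f)\cdot l_X\bigr).
\]
So it suffices to show that these two elements of $\V(I,\uV(X,\uV(Y,Z)))$ agree.

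Next I would apply the diagram \eqref{eqn:cp2.2.1} characterizing $\underline{\pi}_{X,Y,Z}$, taking the free object $W$ to be $I$ and feeding in $g=f\cdot l_{X\otimes Y}\in\V(I\otimes(X\otimes Y),Z)$. Its commutativity yields
\[
  \underline{\pi}_{X,Y,Z}\cdot\pi_{I,X\otimes Y,Z}(f\cdot l_{X\otimes Y})
  =\pi_{I,X,\uV(Y,Z)}\bigl(\pi_{I\otimes X,Y,Z}(f\cdot l_{X\otimes Y}\cdot a_{I,X,Y})\bigr).
\]
Since $\pi_{I,X,\uV(Y,Z)}$ is a bijection, the required equality reduces to
\[
  \pi_{I\otimes X,Y,Z}(f\cdot l_{X\otimes Y}\cdot a_{I,X,Y})=\pi_{X,Y,Z}(f)\cdot l_X.
\]

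Finally I would simplify the left-hand side. The coherence triangle \eqref{eqn:cp2.2.6} gives $l_{X\otimes Y}\cdot a_{I,X,Y}=l_X\otimes 1_Y$, so the left-hand side becomes $\pi_{I\otimes X,Y,Z}\bigl(f\cdot(l_X\otimes 1_Y)\bigr)$. This is precisely the value of $\pi_{X,Y,Z}(f)\cdot l_X$ predicted by the naturality square of $\pi_{-,Y,Z}$ applied to the morphism $l_X\colon I\otimes X\rightarrow X$, which closes the argument. The only real subtlety is to invoke \eqref{eqn:cp2.2.1} at the correct object $W=I$ and to match the associator $a_{I,X,Y}$ it produces with the unit morphisms appearing in the definition of $\varpi$; once the coherence identity \eqref{eqn:cp2.2.6} is recognized, everything collapses to the naturality of the adjunction isomorphism $\pi$ in its first slot.
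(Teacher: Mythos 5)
Your argument is correct and takes essentially the same route as the paper's own (one-line) proof: specialize the defining square \eqref{eqn:cp2.2.1} of $\underline{\pi}$ at $W=I$ and invoke the coherence triangle \eqref{eqn:cp2.2.6}. The only addition is your explicit final appeal to the naturality of $\pi$ in its first variable to pass from precomposition with $l_X\otimes 1_Y$ to precomposition with $l_X$, a step the paper leaves implicit.
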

\begin{proof}
  Replacing $W$ with $I$ in \eqref{eqn:cp2.2.1} and use \eqref{eqn:cp2.2.6}.
\end{proof}

\begin{definition}
  Let $\V$ be a closed monoidal category.
  We denote by $i$ the natural isomorphism whose component
  \begin{equation}
  \label{eqn:cp2.2.9}
    i_X:X\xrightarrow{\cong} \uV(I,X)
  \end{equation}
  at $X$ is defined by
  \begin{equation}
  \label{eqn:cp2.2.20}
    i_X=\pi(r_X).
  \end{equation}
\end{definition}

\begin{lemma}
  Let $\V$ be a closed monoidal category.
  If $X,Y\in\ob\V$ and $f\in\V(X,Y)$, then the following diagram commutes.
  \begin{equation}
  \label{eqn:cp2.2.18}
    \xymatrix{
      \V(X\otimes I,Y) \ar[r]^\pi &
      \V(X,\uV(I,Y))\\
      & \V(X,Y) \ar[u]_{i_Y} \ar[ul]^{r_X^*}}
  \end{equation}
\end{lemma}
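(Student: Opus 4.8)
The plan is to chase a general morphism $f\in\V(X,Y)$ around the triangle. Both legs start at the vertex $\V(X,Y)$. Going up the diagonal and then across the top, $f$ is sent by $r_X^*$ to $f\cdot r_X\in\V(X\otimes I,Y)$ and then by $\pi=\pi_{X,I,Y}$ to $\pi_{X,I,Y}(f\cdot r_X)$; going straight up along $i_Y$, $f$ is sent to $i_Y\cdot f$. So the commutativity of the triangle is equivalent to the single identity
\begin{equation*}
  \pi_{X,I,Y}(f\cdot r_X)=i_Y\cdot f
\end{equation*}
in $\V(X,\uV(I,Y))$, and the whole lemma reduces to verifying this.

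First I would rewrite the left-hand side using that the right unit $r$ is a natural transformation. Naturality of $r$ at $f$ gives $r_Y\cdot(f\otimes 1_I)=f\cdot r_X$, so $\pi_{X,I,Y}(f\cdot r_X)=\pi_{X,I,Y}\bigl(r_Y\cdot(f\otimes 1_I)\bigr)$. Next I would invoke the naturality of $\pi$ in its first variable, which is exactly the adjunction naturality of the family $\pi_{-,I,Y}:\V(-\otimes I,Y)\to\V(-,\uV(I,Y))$; applied to the morphism $f:X\to Y$ and the element $r_Y\in\V(Y\otimes I,Y)$ it reads $\pi_{X,I,Y}\bigl(r_Y\cdot(f\otimes 1_I)\bigr)=\pi_{Y,I,Y}(r_Y)\cdot f$. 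Finally, $\pi_{Y,I,Y}(r_Y)=i_Y$ is precisely the defining formula \eqref{eqn:cp2.2.20} for $i_Y$, so the right-hand side is $i_Y\cdot f$, which closes the computation.

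I do not expect any genuine obstacle here; the argument is a short diagram chase that uses only the naturality of the right unitor, the adjunction naturality of $\pi$, and the definition of $i_Y$. The one point requiring care is bookkeeping the indices of $\pi$: the middle argument is fixed to be $I$ throughout, and the naturality applied must be naturality in the \emph{first} variable of $\pi$ (the contravariant slot $X$), not in the third. Note also that the parameter $f$ named in the hypothesis plays no role beyond labelling the generic element being chased, so no extra structure is needed.
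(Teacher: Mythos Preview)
Your argument is correct and is exactly the paper's one-line proof ``$i_Y\cdot f=\pi(r_Y\cdot(f\otimes 1))=\pi(f\cdot r_X)$'' written out in full: the same use of naturality of $r$, naturality of $\pi$ in its first variable, and the definition $i_Y=\pi(r_Y)$.
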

\begin{proof}
  It follows from $i_Y\cdot f=\pi(r_Y\cdot f\otimes 1)=\pi(f\cdot r_X)$.
\end{proof}

\begin{definition}
  We say that a monoidal category $\V$ is symmetric if for every $X,Y\in\V$, there exists a natural isomorphism
  \begin{equation}
    \label{eqn:cp2.2.19}
    c_{X,Y}:X\otimes Y\rightarrow Y\otimes X
  \end{equation}
  such that the following three diagrams commute.
  \begin{equation}
  \label{eqn:elno.4.11}
    \xymatrix{
      X\otimes Y\ar[r]^{c} \ar[dr]_1 & Y\otimes X \ar[d]^{c}\\
      & X\otimes Y}
  \end{equation}
  \begin{equation}
  \label{eqn:elno.4.12}
    \xymatrix{
      (X\otimes Y)\otimes Z\ar[r]^{a} \ar[d]_{c\otimes 1} &X\otimes(Y\otimes Z) \ar[r]^c & (Y\otimes Z)\otimes X\ar[d]^{a}\\
      (Y\otimes X)\otimes Z \ar[r]_{a} & Y\otimes(X\otimes Z) \ar[r]_{1\otimes c}& Y\otimes (Z\otimes X)}
  \end{equation}
  \begin{equation}
  \label{eqn:elno.4.13}
    \xymatrix{
      I\otimes X\ar[rr]^{c} \ar[dr]_l && X\otimes I \ar[dl]^r\\
      & X}
  \end{equation}
\end{definition}

\section{Review on $\V$-Category}

In this section, we recall some properties of $\V$-categories. Our main reference is again \cite{kelly-05}. But we use ``adjective'' form for the notations of $\V$-categories. So a $\V$-category and its underlying category will be denoted by the same symbol.

First, we recall the $\V$-category, the $\V$-functor and the $\V$-natural transformation.

\begin{definition}[\cite{kelly-05}]
\label{def:cp2.4.3}
  Let $\V$ be a monoidal category.
  A $\V$-category $\A$ consists of
  \begin{enumerate}
    \item a set $\ob\A$ of objects,
    \item for every $A,B\in\ob\A$, an object
        \begin{equation}
        \label{eqn:cp2.3.0}
          \uA(A,B)
        \end{equation}
        of $\V$,
    \item for every $A,B,C\in\ob\uA$,  a morphism
        \begin{equation}
        \label{eqn:cp2.4.14}
          \ub_{A,B,C}:\uA(B,C)\otimes\uA(A,B)\rightarrow \uA(A,C)
        \end{equation}
        of $\V$ and
    \item for every $A\in\ob\A$, a morphism
        \begin{equation}
          j_A:I\rightarrow \uA(A,A)
        \end{equation}
        of $\V$
  \end{enumerate}
such that
  \begin{enumerate}
  \setcounter{enumi}{4}
    \item for every $A,B,C,D\in\ob\A$, the following diagram commutes
        \begin{equation}
        \label{eqn:elno.2.2}
          \xymatrix@C=-1em{
            (\uA(C,D)\otimes\uA(B,C))\otimes\uA(A,B) \ar[rr]^{a}
              \ar[d]_{\ub\otimes 1} &
              &\uA(C,D)\otimes(\uA(B,C)\otimes\uA(A,B))
              \ar[d]^{1 \otimes\ub}\\
            \uA(B,D)\otimes\uA(A,B) \ar[r]_(0.6){\ub} & \uA(A,D)
              & \uA(C,D)\otimes\uA(A,C) \ar[l]^(0.6){\ub}}
        \end{equation}
        and
    \item for every $A,B\in\ob\A$, the following diagram commutes.
        \begin{equation}
        \label{eqn:elno.2.3}
          \xymatrix{
            \uA(B,B)\otimes\uA(A,B) \ar[r]^(0.6){\ub}& \uA(A,B)
              & \uA(A,B)\otimes\uA(A,A) \ar[l]_(0.6){\ub}\\
            I\otimes\uA(A,B) \ar[u]^{j_B\otimes 1} \ar[ur]_{l}&
              & \uA(A,B)\otimes I\ar[ul]^{r} \ar[u]_{1 \otimes j_A}\\}
        \end{equation}
  \end{enumerate}
\end{definition}

\begin{definition}
\label{def:cp2.3.2}
  Let $\V$ be a monoidal category.
  Let $\A$ and $\B$ be  $\V$-categories.
  A $\V$-functor $T:\A\rightarrow \B$ consists of
  \begin{enumerate}
    \item a function $T:\ob \A\rightarrow\ob\B$ and
    \item for every $A,B\in\ob\A$, a morphism $T_{A,B}:\uA(A,B)\rightarrow\uB(TA,TB)$ of $\V$
  \end{enumerate}
  such that
  \begin{enumerate}
  \setcounter{enumi}{2}
    \item for every $A,B,C\in\ob\A$, the following diagram commutes
        \begin{equation}
        \label{eqn:elno.2.6}
          \xymatrix{
            \uA(B,C)\otimes\uA(A,B) \ar[rr]^\ub \ar[d]_{T_{B,C}\otimes T_{A,B}}
              && \uA(A,C) \ar[d]^{T_{A,C}}\\
            \uB(TB,TC)\otimes\uB(TA,TB) \ar[rr]_\ub
              && \uB(TA,TC)}
        \end{equation}
        and
    \item for every $A\in\ob\A$, the following diagram commutes.
        \begin{equation}
        \label{eqn:elno.2.7}
          \xymatrix{
            I \ar[r]^{j_A} \ar[rd]_{j_{TA}}& \uA(A,A) \ar[d]^{T_{A,A}}\\
            &\uB(TA,TA)}
        \end{equation}
  \end{enumerate}
\end{definition}

\begin{definition}
  Let $\V$ be a monoidal category.
  Let $\A,\B$ be $\V$-categories.
  Let $S,T:\A\rightarrow\B$ be $\V$-functors.
  A $\V$-natural transformation $\alpha:S\rightarrow T$ is a collection \begin{equation*}
    \{\alpha_{A}:I\rightarrow\uB(SA,TA)\mid A\in\ob\A\}
  \end{equation*}
  of  morphisms of $\V$ such that
  for every $A,B\in\ob\A$, the following diagram commutes.
  \begin{equation}
  \label{eqn:cp2.2.8}
    \xymatrix@R=2em{
      I\otimes\uA(A,B) \ar[rr]^(0.4){\alpha_B\otimes S_{A,B}}
      && \uB(SB,TB)\otimes\uB(SA,SB) \ar[d]^\ub\\
      \uA(A,B)\ar[u]^{l^{-1}} \ar[d]_{r^{-1}} &&\uB(SA,TB)\\
      \uA(A,B)\otimes I \ar[rr]_(0.4){T_{A,B}\otimes\alpha_A}
      && \uB(TA,TB)\otimes\uB(SA,TA)\ar[u]_\ub}
  \end{equation}
\end{definition}

\begin{lemma}
\label{lem:cp2.4.14}
  Let $\V$ be a closed monoidal category so that it is a $\V$-category by Proposition \ref{pro:cp2.3.5} and Proposition \ref{pro:cp2.3.4}.
  Let $\A$ be a $\V$-category.
  Let $S,T:\A\rightarrow \V$ be $\V$-functors.
  Then $\{\alpha_{A}:I\rightarrow\uV(SA,TA)\mid A\in\ob\A\}$ is a $\V$-natural transformation from $S$ to $T$ if and only if
  for every $A,B\in\ob\A$, the following diagram commutes.
  \begin{equation}
    \xymatrix@C=5em{
      \uA(A,B)
      \ar[r]^{S_{A,B}}
      \ar[d]_{T_{A,B}}
      & \uV(SA,SB)
      \ar[d]^{\uV(1,\varpi^{-1}(\alpha_B))}
      \\
      \uV(TA,TB)
      \ar[r]_{\uV(\varpi^{-1}(\alpha_A),1)}
      & \uV(SA,TB)}
  \end{equation}
\end{lemma}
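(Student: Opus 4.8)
The plan is to observe that, once the two vertical legs of the stated square are unwound, both squares assert the \emph{same} equality between a pair of morphisms $\uA(A,B)\rightarrow\uV(SA,TB)$; the biconditional is then immediate. The only structural input is Proposition \ref{pro:cp2.3.5}: since $\V$ carries its canonical $\V$-structure, with $\varphi=\varpi$, the functorial actions of the bifunctor $\uV(-,-)$ are computed by the defining formulas \eqref{eqn:cp2.4.22} and \eqref{eqn:cp2.4.24} of a $\V$-structure. Everything else is the naturality of $l$ and $r$.

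First I would unwind the right leg. Setting $g=\varpi^{-1}(\alpha_B)\in\V(SB,TB)$, so that $\varpi(g)=\alpha_B$, formula \eqref{eqn:cp2.4.24} applied to $\V$ with $(X,Y,Z)=(SA,SB,TB)$ gives $\uV(1,\varpi^{-1}(\alpha_B))=\ub\cdot(\alpha_B\otimes 1)\cdot l^{-1}$. Precomposing with $S_{A,B}$, pushing $S_{A,B}$ through $l^{-1}$ by naturality of $l$, and merging the two tensor factors yields $\uV(1,\varpi^{-1}(\alpha_B))\cdot S_{A,B}=\ub\cdot(\alpha_B\otimes S_{A,B})\cdot l^{-1}$, which is exactly the upper leg of \eqref{eqn:cp2.2.8} read with $\B=\V$.

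Symmetrically, setting $f=\varpi^{-1}(\alpha_A)\in\V(SA,TA)$, so that $\varpi(f)=\alpha_A$, formula \eqref{eqn:cp2.4.22} applied to $\V$ with $(X,Y,Z)=(SA,TA,TB)$ gives $\uV(\varpi^{-1}(\alpha_A),1)=\ub\cdot(1\otimes\alpha_A)\cdot r^{-1}$. Precomposing with $T_{A,B}$ and moving $T_{A,B}$ across $r^{-1}$ by naturality of $r$ yields $\uV(\varpi^{-1}(\alpha_A),1)\cdot T_{A,B}=\ub\cdot(T_{A,B}\otimes\alpha_A)\cdot r^{-1}$, which is exactly the lower leg of \eqref{eqn:cp2.2.8}. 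Consequently the stated square commutes precisely when these two composites coincide, i.e.\ precisely when \eqref{eqn:cp2.2.8} commutes for $S,T\colon\A\rightarrow\V$; the latter is the definition of $\{\alpha_A\}$ being $\V$-natural, so both directions follow at once.

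I expect no real obstacle: the entire content is the identification of the square's two vertical maps with the two composites of the $\V$-naturality diagram. The only points needing attention are feeding \eqref{eqn:cp2.4.22} and \eqref{eqn:cp2.4.24} the correct substitutions and respecting the variance of $\uV(-,-)$ (contravariant in its first slot, so $f=\varpi^{-1}(\alpha_A)\colon SA\rightarrow TA$ governs the left leg and $g=\varpi^{-1}(\alpha_B)\colon SB\rightarrow TB$ the right), together with the routine $l$- and $r$-naturality steps, which per the coherence convention at the end of the introduction I would leave implicit.
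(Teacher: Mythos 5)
Your proposal is correct and takes essentially the same route as the paper, whose entire proof of Lemma \ref{lem:cp2.4.14} is the citation of \eqref{eqn:cp2.4.22} and \eqref{eqn:cp2.4.24}; you simply make explicit the intended details, namely unwinding $\uV(1,\varpi^{-1}(\alpha_B))$ and $\uV(\varpi^{-1}(\alpha_A),1)$ via those formulas (using that $\varphi=\varpi$ for the canonical $\V$-structure on $\V$ from Proposition \ref{pro:cp2.3.5}) and then matching the two composites with the legs of the $\V$-naturality diagram \eqref{eqn:cp2.2.8} by naturality of $l$ and $r$. The substitutions and variance bookkeeping in your write-up are accurate, so nothing is missing.
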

\begin{proof}
  It follows from \eqref{eqn:cp2.4.22} and \eqref{eqn:cp2.4.24}.
\end{proof}

\begin{definition}
  Let $\V$ be a closed monoidal category so that it is a $\V$-category by Proposition \ref{pro:cp2.3.5} and Proposition \ref{pro:cp2.3.4}.
  Let $\A$ be a $\V$-category.
  Let $A,B,C\in\ob\A$.
  We define a $\V$-functor
  \begin{equation}
    \uA(A,-):\A\rightarrow\V
  \end{equation}
  by
  \begin{equation}
  \label{eqn:cp2.2.5}
    \uA(A,-)_{B,C}=\pi(\ub_{A,B,C})
  \end{equation}
  where $\pi$ is the adjunction in \eqref{eqn:elno.5.1}.
\end{definition}

\begin{definition}
  Let $\V$ be a symmetric monoidal category.
  Let $\A$ be a $\V$-category.
  Then a $\V$-category $\A^{\op}$, called the opposite of $\A$, consists of
  \begin{enumerate}
    \item a set $\ob\A^{\op}$ of objects defined by
        \begin{equation}
          \ob\A^{\op}=\ob\A,
        \end{equation}
    \item for every $A,B\in\ob\A$, an object $\uA^{\op}(A,B)$ of $\V$
        defined by
        \begin{equation}
          \uA^{\op}(A,B)=\uA(B,A),
        \end{equation}
    \item for every $A,B,C\in\ob\uA^{\op}$,  a morphism
        \begin{equation}
          \ub^{\op}_{A,B,C}:\uA^{\op}(B,C)\otimes\uA^{\op}(A,B)\rightarrow \uA^{\op}(A,C)
        \end{equation}
        of $\V$
        defined by the composition
        \begin{equation}
          \uA^{\op}(B,C)\otimes\uA^{\op}(A,B) \xrightarrow{c}
          \uA^{\op}(A,B)\otimes\uA^{\op}(B,C) \xrightarrow{\ub_{C,B,A}}
          \uA^{\op}(A,C)
        \end{equation}
        where $c$ is \eqref{eqn:cp2.2.19}, and
    \item for every $A\in\ob\A$, a morphism
        \begin{equation}
          j^{\op}_A:I\rightarrow \uA^{\op}(A,A)
        \end{equation}
        of $\V$ defined by
        \begin{equation}
          j^{\op}_A=j_A.
        \end{equation}
  \end{enumerate}
\end{definition}

\begin{remark}
  If $\V$ is closed and symmetric, then for every $A\in\ob\A$, we can define a $\V$-functor $\uA(-,A)=\uA^{\op}(A,-):\A^{\op}\rightarrow\V$.
\end{remark}

\section{Category with  $\V$-Structure}
\label{sec:cp2.v-st}

Here, we show that two definitions, Definition \ref{def:cp2.4.5} and Definition \ref{def:cp2.4.3} are equivalent.

\begin{proposition}
\label{pro:cp2.3.3}
  Let $\V$ be a monoidal category.
  Let $\A$ is $\V$-category. Then there is a category  $\S$  with  $\V$-structure
  such that
  \begin{enumerate}
    \item $\ob\S=\ob\A$,
    \item for every $A,B\in\ob\S$
        \begin{equation}
        \label{eqn:cp2.4.6}
          \S(A,B)=\V(I,\uA(A,B)),
        \end{equation}
    \item for every $f\in\V(I,\uA(A,B))$ and $g\in\V(I,\uA(B,C))$, the composition $g\cdot f$ is
        \begin{equation}
        \label{eqn:cp2.4.13}
          I \xrightarrow{l^{-1}_I} I \otimes I \xrightarrow{g\otimes f} \uA(B,C)\otimes\uA(A,B) \xrightarrow{\ub}\uA(A,C),
        \end{equation}
    \item for every $A\in\ob\S$
        \begin{equation}
        \label{eqn:cp2.4.10}
          1_A=j_A,
        \end{equation}
    \item for every $A,B\in\ob\S$
        \begin{equation}
          \uS(A,B)=\uA(A,B),
        \end{equation}
    \item the morphism $\ub_{X,Y,Z}$ in \eqref{eqn:cp2.4.18} is \eqref{eqn:cp2.4.14}, and
    \item the natural isomorphism $\varphi$ in \eqref{eqn:cp2.4.15} is given by the identity \eqref{eqn:cp2.4.6}.
  \end{enumerate}
\end{proposition}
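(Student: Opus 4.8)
The plan is to construct $\S$ in two stages---first as an ordinary category, then equipping it with the $\V$-structure---reading off every piece of data from $\A$, and to verify the axioms of Definition~\ref{def:cp2.4.5} by reducing them to the associativity and unit axioms \eqref{eqn:elno.2.2}, \eqref{eqn:elno.2.3} of the $\V$-category $\A$.

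First I would build the underlying ordinary category: set $\ob\S=\ob\A$ and $\S(A,B)=\V(I,\uA(A,B))$, take composition to be \eqref{eqn:cp2.4.13} and identities to be $1_A=j_A$. Associativity of this composition unwinds, after using the naturality of $l$ and $r$ and the coherence of $\V$, to the associativity axiom \eqref{eqn:elno.2.2} of $\A$, while the two unit laws unwind to the unit axiom \eqref{eqn:elno.2.3}. This shows $\S$ is a category and yields (3) and (4).

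Next I would install the $\V$-structure. On objects put $\uS(A,B)=\uA(A,B)$ and take the composition morphism $\ub_{X,Y,Z}$ of the $\V$-structure to be the $\ub_{X,Y,Z}$ of $\A$ from \eqref{eqn:cp2.4.14}, giving (5) and (6); let $\varphi$ be the identity map, which is legitimate because $\S(A,B)=\V(I,\uA(A,B))=\V(I,\uS(A,B))$ by construction, giving (7). I would then \emph{define} the action of $\uS$ on morphisms to be the composites that axioms \eqref{eqn:cp2.4.22} and \eqref{eqn:cp2.4.24} prescribe (with $\varphi=\id$, so that $\varphi(f)=f$). With these choices, axioms (5) and (6) of Definition~\ref{def:cp2.4.5} hold by construction, and axiom (4), the commutativity of \eqref{eqn:cp2.4.2}, is literally the associativity axiom \eqref{eqn:elno.2.2} of $\A$.

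What remains, and what I expect to be the bulk of the work, is to check that $\uS(-,-)$ is genuinely a bifunctor $\S^{\op}\times\S\to\V$ and that $\varphi$ is natural. For functoriality I would verify in turn that $\uS(-,Z)$ and $\uS(X,-)$ each preserve identities---a one-line consequence of the unit axiom \eqref{eqn:elno.2.3}---and composites, and then the interchange law $\uS(X,g)\cdot\uS(f,Y)=\uS(f,Y')\cdot\uS(X',g)$; expanding both sides through \eqref{eqn:cp2.4.22} and \eqref{eqn:cp2.4.24} and using the naturality of $a,l,r$, each reduces to the associativity axiom \eqref{eqn:elno.2.2}. For the naturality of $\varphi$, since $\varphi=\id$ it suffices to show $g\cdot h\cdot f=\uS(f,g)\cdot h$ for every $h\in\S(X,Y)$, the left-hand composition taken in $\S$ and the right-hand one in $\V$; unwinding the right-hand side by the definition of $\uS$ on morphisms and collapsing the units via the naturality of $l,r$ and the identity $r_I=l_I$ recovers the left-hand side. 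The main obstacle is the interchange law for $\uS$, since it is the only place where the full associativity pentagon of $\A$ must be brought to bear and the suppressed coherence subdiagrams reinstated; all the other verifications are short unit- or naturality-calculations.
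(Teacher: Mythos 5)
Your proposal is correct and follows essentially the same route as the paper's proof: build the ordinary category from the associativity and unit axioms \eqref{eqn:elno.2.2} and \eqref{eqn:elno.2.3}, define $\uS(-,-)$ on morphisms by the composites forced by \eqref{eqn:cp2.4.22} and \eqref{eqn:cp2.4.24} with $\varphi=\id$, and verify bifunctoriality by reducing identity-preservation, composite-preservation and the interchange law to those same two axioms. Your explicit verification of the naturality of $\varphi$ (via the naturality of $l,r$ and $r_I=l_I$) is a detail the paper leaves implicit, but it is the same argument.
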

\begin{proof}
  By \eqref{eqn:elno.2.2}, the composition \eqref{eqn:cp2.4.13} is associative.
  By \eqref{eqn:elno.2.3}, the bottom of the following diagrams are identities.
  \begin{equation*}
    \xymatrix{
       I \ar[r]^{l^{-1}}\ar[d]_f& I \otimes I \ar[d]^f&&\\
      \uA(A,B)\ar[r]^{l^{-1}}& I \otimes\uA(A,B)\ar[r]^(0.4){j_A}& \uA(B,B)\otimes\uA(A,B)\ar[r]^(0.6)\ub & \uA(A,B)\\}
  \end{equation*}
  \begin{equation*}
    \xymatrix{
       I \ar[r]^{r^{-1}}\ar[d]_f& I \otimes I \ar[d]^f&&\\
      \uA(A,B)\ar[r]^{r^{-1}}&\uA(A,B)\otimes I \ar[r]^(0.4){j_A} &\uA(A,B)\otimes\uA(A,A)\ar[r]^(0.6)\ub & \uA(A,B)\\}
  \end{equation*}
  So the unit law also holds. Therefore $\S$ is a category.

  For every $f\in\S(A,B)$ and $g\in\S(B,C)$,
  we define $\uS(f,C)$ by the composition
  \begin{equation}
  \label{eqn:cp2.4.11}
    \uS(B,C)\xrightarrow{r^{-1}} \uS(B,C)\otimes I \xrightarrow{1\otimes f} \uS(B,C)\otimes\uS(A,B) \xrightarrow{\ub} \uS(A,C)
  \end{equation}
  and $\uS(A,g)$ by the composition
  \begin{equation}
  \label{eqn:cp2.4.12}
    \uS(A,B)\xrightarrow{l^{-1}} I\otimes\uS(A,B) \xrightarrow{g \otimes 1} \uS(B,C)\otimes\uS(A,B) \xrightarrow{\ub} \uS(A,C).
  \end{equation}
  By \eqref{eqn:elno.2.3}, \eqref{eqn:cp2.4.10}, \eqref{eqn:cp2.4.11} and \eqref{eqn:cp2.4.12}, $\uS(-,-)$ preserves the identities.
  Let $A\in\ob\S$, $f\in\S(B,C)$ and $g\in\S(C,D)$. Then by \eqref{eqn:cp2.4.12} and \eqref{eqn:cp2.4.13}, $\uS(A,g\cdot f)$ is the diagonal of the following commutative diagram.
  \begin{equation*}
  \resizebox{0.95\textwidth}{!}{
    \xymatrix@C=1.5em{
      \uA(A,B) \ar[r]^{l^{-1}} &  I \otimes\uA(A,B) \ar[r]^(0.45)f \ar[d]^{l^{-1}} & \uA(B,C)\otimes\uA(A,B) \ar[r]^(0.57)\ub \ar[d]^{l^{-1}} & \uA(A,C) \ar[d]^{l^{-1}}\\
      &  I \otimes I \otimes\uA(A,B) \ar[r]^(0.45)f & I \otimes\uA(B,C)\otimes\uA(A,B) \ar[r]^(0.57)\ub \ar[d]^g & I \otimes\uA(A,C) \ar[d]^g\\
      && \uA(C,D)\otimes\uA(B,C)\otimes\uA(A,B) \ar[r]^(0.57)\ub \ar[d]^\ub & \uA(C,D)\otimes\uA(A,C) \ar[d]^\ub\\
      && \uA(B,D)\otimes\uA(A,B) \ar[r]^(0.57)\ub & \uA(A,D)}}
  \end{equation*}
  Therefore $\uS(A,g\cdot f)=\uS(A,g)\cdot\uS(A,f)$.
  Similarly, if $f\in\S(A,B)$, $g\in\S(B,C)$ and $D\in\ob\S$,  then $\uS(g\cdot f,D)=\uS(f,D)\cdot\uS(g,D)$ holds.
  Let $f\in\S(A,B)$ and $g\in\S(C,D)$. Then the following diagram commutes.
  \begin{equation*}
  \resizebox{0.95\textwidth}{!}{
    \xymatrix@C=1.5em{
      \uA(B,C) \ar[r]^{l^{-1}} \ar[d]^{r^{-1}} &  I \otimes\uA(B,C) \ar[r]^(0.45)g \ar[d]^{r^{-1}} & \uA(C,D)\otimes\uA(B,C) \ar[r]^(0.57)\ub \ar[d]^{r^{-1}} & \uA(B,D) \ar[d]^{r^{-1}}\\
      \uA(B,C)\otimes I  \ar[r]^{l^{-1}} \ar[d]^f & I \otimes\uA(B,C)\otimes I  \ar[r]^(0.45)g \ar[d]^f & \uA(C,D)\otimes\uA(B,C)\otimes I  \ar[r]^(0.57)\ub \ar[d]^f & \uA(B,D)\otimes I  \ar[d]^f\\
      \uA(B,C)\otimes\uA(A,B) \ar[r]^{l^{-1}} \ar[d]^\ub & I \otimes\uA(B,C)\otimes\A(A,B) \ar[r]^(0.45)g \ar[d]^\ub & \uA(C,D)\otimes\uA(B,C)\otimes\uA(A,B) \ar[r]^(0.57)\ub \ar[d]^\ub & \uA(B,D)\otimes\uA(A,B) \ar[d]^\ub\\
      \uA(A,C) \ar[r]^{l^{-1}} &  I \otimes\uA(A,C) \ar[r]^(0.45)g & \uA(C,D)\otimes\uA(A,C) \ar[r]^(0.57)\ub & \uA(A,D)}}
  \end{equation*}
  Therefore, $\uS(-,-)$ is a bifunctor.

  Finally, since $\varphi$ is the identity, \eqref{eqn:cp2.4.22} and \eqref{eqn:cp2.4.24} hold by  \eqref{eqn:cp2.4.11} and \eqref{eqn:cp2.4.12}.
\end{proof}

\begin{definition}
  The ordinary category $\S$ in Proposition \ref{pro:cp2.3.3} (without $\V$-structure) is called the underlying category of $\A$.
\end{definition}

\begin{proposition}
\label{pro:cp2.3.4}
  Let $\V$ be a monoidal category.
  If $\S$ is a category with $\V$-structure, then
  there is a $\V$-category $\A$  such that
  \begin{enumerate}
    \item $\ob\A=\ob\S$,
    \item for every $A,B\in\ob\A$
        \begin{equation}
          \uA(A,B)=\uS(A,B),
        \end{equation}
    \item the morphism $\ub_{A,B,C}$ in \eqref{eqn:cp2.4.14} is \eqref{eqn:cp2.4.18} and
    \item for every $A\in\ob\A$
        \begin{equation}
        \label{eqn:cp2.4.8}
          j_A=\varphi(1_A).
        \end{equation}
  \end{enumerate}
\end{proposition}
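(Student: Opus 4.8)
The plan is to take the data prescribed in the statement---$\ob\A=\ob\S$, $\uA(A,B)=\uS(A,B)$, the composition $\ub_{A,B,C}$ given by \eqref{eqn:cp2.4.18}, and the units $j_A=\varphi(1_A):I\rightarrow\uS(A,A)$---and to verify the two axioms of Definition \ref{def:cp2.4.3}, namely the associativity diagram \eqref{eqn:elno.2.2} and the unit diagram \eqref{eqn:elno.2.3}. Note first that $j_A$ is well typed: since $\varphi_{A,A}:\S(A,A)\rightarrow\V(I,\uS(A,A))$ is part of the $\V$-structure, $\varphi(1_A)$ is indeed a morphism $I\rightarrow\uS(A,A)$ of $\V$.

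The associativity axiom is immediate. Because $\uA(A,B)=\uS(A,B)$ and the composition $\ub$ for $\A$ is \emph{by definition} the composition $\ub$ of the $\V$-structure, the diagram \eqref{eqn:elno.2.2} is literally the diagram \eqref{eqn:cp2.4.2}, which commutes by hypothesis. So no work is needed there.

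The key observation for the unit axiom is that $\uS(-,-)$ is a bifunctor, hence preserves identities: $\uS(1_A,B)=1_{\uS(A,B)}$ and $\uS(A,1_B)=1_{\uS(A,B)}$ for all $A,B\in\ob\S$. I would then read off these two identities through the compatibility conditions \eqref{eqn:cp2.4.22} and \eqref{eqn:cp2.4.24}. Taking $f=1_A$ (with $X=Y=A$ and $Z=B$) in \eqref{eqn:cp2.4.22} expresses $\uS(1_A,B)$ as $\ub\cdot(1\otimes\varphi(1_A))\cdot r^{-1}$; equating this with $1_{\uS(A,B)}$, substituting $j_A=\varphi(1_A)$, and cancelling $r^{-1}$ against $r$ yields $r=\ub\cdot(1\otimes j_A)$, which is exactly the right-hand triangle of \eqref{eqn:elno.2.3}. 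Symmetrically, taking $g=1_B$ (with $X=A$ and $Y=Z=B$) in \eqref{eqn:cp2.4.24} expresses $\uS(A,1_B)$ as $\ub\cdot(\varphi(1_B)\otimes 1)\cdot l^{-1}$, and equating with $1_{\uS(A,B)}$ gives $l=\ub\cdot(j_B\otimes 1)$, the left-hand triangle.

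There is no genuine obstacle here; the proposition is essentially the translation dictionary between the axioms of Definition \ref{def:cp2.4.5} and those of Definition \ref{def:cp2.4.3}, and is the reverse of the passage carried out in Proposition \ref{pro:cp2.3.3}. The only point requiring care is the bookkeeping of which compatibility condition (\eqref{eqn:cp2.4.22} or \eqref{eqn:cp2.4.24}) produces which unit triangle, together with the correct direction of $l,r$ when cancelling $l^{-1},r^{-1}$. Everything else---functoriality of $\uS(-,-)$ and the existence of $\varphi$---is already furnished by the assumption that $\S$ carries a $\V$-structure.
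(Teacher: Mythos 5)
Your proposal is correct and takes essentially the same route as the paper: the paper likewise treats associativity as immediate (it is literally \eqref{eqn:cp2.4.2}) and deduces the unit law \eqref{eqn:elno.2.3} from the fact that the bifunctor $\uS(-,-)$ preserves identities, $\uS(1_A,B)=1_{\uS(A,B)}=\uS(A,1_B)$, combined with \eqref{eqn:cp2.4.22} and \eqref{eqn:cp2.4.24}. Your write-up merely makes explicit the substitutions $f=1_A$ and $g=1_B$ and the cancellation of $r^{-1}$, $l^{-1}$ that the paper's one-line proof leaves implicit.
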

\begin{proof}
  We only need to show the unit law \eqref{eqn:elno.2.3} holds.
  Since $\uS(-,-)$ is a functor $\uS(A,1_B)=1_{\uS(A,B)}=\uS(1_A,B)$. So, from \eqref{eqn:cp2.4.22} and  \eqref{eqn:cp2.4.24}, \eqref{eqn:elno.2.3} holds.
\end{proof}

\begin{lemma}
\label{lem:cp2.4.4}
  Let $\V$ be a monoidal category.
  Let $\S$ be a category with $\V$-structure.
  Let $\A$ be the $\V$-category  associated to $\S$ by Proposition \ref{pro:cp2.3.4}.
  Then, for every $f\in\S(A,B)$ and $g\in\S(B,C)$,
  \begin{equation}
  \label{eqn:cp2.4.9}
    \varphi(g)\cdot\varphi(f)=\varphi(g\cdot f)
  \end{equation}
  holds where the left composition is the composition \eqref{eqn:cp2.4.13} of the underlying category of $\A$.
\end{lemma}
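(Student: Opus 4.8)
The plan is to unwind both sides of \eqref{eqn:cp2.4.9} into explicit morphisms $I\to\uS(A,C)$ of $\V$ and to match them. By the definition \eqref{eqn:cp2.4.13} of composition in the underlying category of $\A$, the left-hand side $\varphi(g)\cdot\varphi(f)$ is the composite
\[
  I \xrightarrow{l^{-1}_I} I\otimes I \xrightarrow{\varphi(g)\otimes\varphi(f)} \uS(B,C)\otimes\uS(A,B) \xrightarrow{\ub} \uS(A,C).
\]
So it suffices to show that $\varphi(g\cdot f)$, where $g\cdot f$ is the composite in $\S$, equals this morphism.

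First I would reduce $\varphi(g\cdot f)$ using the naturality of $\varphi$ in its second (covariant) variable. Applied to $g\in\S(B,C)$ and evaluated at $f$, naturality of the isomorphism $\varphi\colon\S(-,-)\xrightarrow{\cong}\V(I,\uS(-,-))$ gives $\varphi(g\cdot f)=\uS(A,g)\cdot\varphi(f)$. Next I would expand $\uS(A,g)$ by axiom \eqref{eqn:cp2.4.24} of the $\V$-structure, so that $\varphi(g\cdot f)$ becomes
\[
  I \xrightarrow{\varphi(f)} \uS(A,B) \xrightarrow{l^{-1}} I\otimes\uS(A,B) \xrightarrow{\varphi(g)\otimes 1} \uS(B,C)\otimes\uS(A,B) \xrightarrow{\ub} \uS(A,C).
\]

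Since both composites end with the same $\ub$, it remains only to compare the two morphisms $I\to\uS(B,C)\otimes\uS(A,B)$. The naturality of $l$ applied to $\varphi(f)\colon I\to\uS(A,B)$ gives $l^{-1}_{\uS(A,B)}\cdot\varphi(f)=(1\otimes\varphi(f))\cdot l^{-1}_I$, and then the interchange (bifunctoriality of $\otimes$) identity $(\varphi(g)\otimes 1)\cdot(1\otimes\varphi(f))=\varphi(g)\otimes\varphi(f)$ converts the expanded form of $\varphi(g\cdot f)$ into exactly \eqref{eqn:cp2.4.13}, completing the argument. I do not expect a genuine obstacle here: this last step is a routine diagram chase, and the only point requiring care is bookkeeping which variable of $\varphi$ its naturality is applied to. As a sanity check one may run the symmetric argument, using naturality of $\varphi$ in the first (contravariant) variable together with \eqref{eqn:cp2.4.22} and the naturality of $r$; this yields the same conclusion after invoking $r_I=l_I$.
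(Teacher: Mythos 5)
Your proof is correct and is essentially the paper's own argument run in reverse: you rewrite $\varphi(g\cdot f)$ as $\uS(A,g)\cdot\varphi(f)$ via naturality of $\varphi$, expand $\uS(A,g)$ by condition (6) of Definition \ref{def:cp2.4.5}, and then use naturality of $l$ together with bifunctoriality of $\otimes$ --- exactly the ingredients the paper packages into its two commutative diagrams. No gap; nothing further is needed.
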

\begin{proof}
  By \eqref{eqn:cp2.4.13}, $\varphi(g)\cdot \varphi(f)$ is the composition
  \begin{equation*}
    \i\xrightarrow{}\i\otimes\i\xrightarrow{\varphi(g)\otimes\varphi(f)} \uS(B,C)\otimes\uS(A,B) \xrightarrow{\ub}\uS(A,C).
  \end{equation*}
  Since $\varphi(g)\cdot \varphi(f)$ can be decomposed as
  \begin{equation*}
    \xymatrix{
      \i\ar[r]^{l^{-1}}\ar[d]_{\varphi(f)}&\i\otimes\i\ar[d]^{\varphi(f)}&&\\
      \uS(A,B)\ar[r]^{l^{-1}}&\i\otimes\uS(A,B)\ar[r]^(0.4){\varphi(g)}& \uS(B,C)\otimes\uS(A,B)\ar[r]^(0.6)\ub & \uS(A,C),\\}
  \end{equation*}
  the condition (6) of Definition \ref{def:cp2.4.5} and the commutativity of the diagram
  \begin{equation*}
    \xymatrix{
      \V(\i,\uS(A,B))\ar[rr]^{\V(\i,\uS(A,g))} && \V(\i,\uS(A,C))\\
      \S(A,B)\ar[rr]_{\S(A,g)}\ar[u]^\varphi && \S(A,C)\ar[u]_\varphi}
  \end{equation*}
  imply that \eqref{eqn:cp2.4.9} holds.
\end{proof}

\begin{remark}
  The correspondence in Proposition \ref{pro:cp2.3.3} is injective.
  Lemma \ref{lem:cp2.4.4} tells us that a category $\S$ with $\V$-structure and the underlying category of the $\V$-category $\A$ obtained by Proposition \ref{pro:cp2.3.4} from $\S$ are isomorphic as ordinary categories.
  It also clear that $\S$ and the category with $\V$-structure obtained by Proposition \ref{pro:cp2.3.3} from $\A$ have the same $\V$-structure.
  In short, every category with $\V$-structure is isomorphic to another category with the same $\V$-structure induced from an unique $\V$-category.
\end{remark}

In this section, we have used different notations, $\S$ and $\A$, for a category with $\V$-structure and the $\V$-category associated with it. But as we saw in the previous remark, it is not necessary to distinguish them. So from now on, we will use the same notation for them without mentioning it explicitly as we do in the following

\begin{remark}
  Let $\V$ be a monoidal category.
  Let $\S$ be a category with $\V$-structure.
  Let $g\in\S(B,C)$.
  If $\V$ is closed, then \eqref{eqn:cp2.4.24}
  is $\varpi^{-1}(\uS(A,-)_{B,C}\cdot \varphi(g))$ by \eqref{eqn:cp2.2.5}.
  Therefore,
  \begin{equation*}
    \varpi(\uS(A,g))=\uS(A,-)_{B,C}\cdot \varphi(g)
  \end{equation*}
  holds.
\end{remark}

\section{$\V$-Category with Tensor}

Here we will prove Theorem \ref{thm:cp2.7.5}.
First, we need to reformulate the $\V$-naturality of ${\underline\phi}_{K,X,Y}$ in $Y$.

\begin{lemma}
\label{lem:cp2.5.4}
  Let $\V$ be a closed monoidal category so that it is a $\V$-category by Proposition \ref{pro:cp2.3.5} and Proposition \ref{pro:cp2.3.4}.
  Let $\S$ be a $\V$-category.
  Suppose that for every $K\in\ob\V$ and $X,Y\in \ob\S$, there exists an isomorphism
  \begin{equation*}
    {\underline\phi}_{K,X,Y}:\uS(K\otimes X,Y)
      \rightarrow \uV(K,\uS(X,Y)).
  \end{equation*}
  Then ${\underline\phi}_{K,X,Y}$ is $\V$-natural in $Y$ if and only if for every $Z\in\ob\S$, the diagram
  \begin{equation}
  \label{eqn:cp2.4.3}
    \xymatrix{
      \uS(Y,Z)\otimes\uS(K\otimes X,Y) \ar[rr]^(0.55)\ub \ar[d]_{1\otimes{\underline\phi}_{K,X,Y}} && \uS(K\otimes X,Z)\ar[d]^{{\underline\phi}_{K,X,Z}}\\
      \uS(Y,Z)\otimes\uV(K,\uS(X,Y)) \ar[rr]^(0.55){\delta} && \uV(K,\uS(X,Z))}
  \end{equation}
  commutes where $\delta=\pi^{-1}(\uV(K,\uS(X,-))_{Y,Z})$.
\end{lemma}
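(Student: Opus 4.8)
The plan is to recognize the asserted equivalence as an instance of Lemma~\ref{lem:cp2.4.14}, transported across the adjunction $\pi$. Fix $K\in\ob\V$ and $X\in\ob\S$. The assignment $Y\mapsto\uS(K\otimes X,Y)$ is the $\V$-functor $S:=\uS(K\otimes X,-):\S\rightarrow\V$, and $Y\mapsto\uV(K,\uS(X,Y))$ is the $\V$-functor $T:=\uV(K,-)\circ\uS(X,-):\S\rightarrow\V$, being a composite of $\V$-functors. By definition, the phrase ``$\underline\phi_{K,X,Y}$ is $\V$-natural in $Y$'' means that the family $\alpha_Y:=\varpi(\underline\phi_{K,X,Y}):I\rightarrow\uV(SY,TY)$ is a $\V$-natural transformation $S\rightarrow T$; note $\varpi^{-1}(\alpha_Y)=\underline\phi_{K,X,Y}$.

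First I would apply Lemma~\ref{lem:cp2.4.14} to $S$, $T$ and $\{\alpha_Y\}$. It tells us that $\{\alpha_Y\}$ is $\V$-natural if and only if, for all $Y,Z\in\ob\S$, the square with top edge $S_{Y,Z}$, left edge $T_{Y,Z}$, right edge $\uV(1,\underline\phi_{K,X,Z})$ and bottom edge $\uV(\underline\phi_{K,X,Y},1)$, sharing the corner $\uV(SY,TZ)$, commutes. Here $SY=\uS(K\otimes X,Y)$ and $TZ=\uV(K,\uS(X,Z))$, and by \eqref{eqn:cp2.2.5} we have $S_{Y,Z}=\pi(\ub)$ with $\ub=\ub_{K\otimes X,Y,Z}$, while $T_{Y,Z}=\pi(\delta)$ by the very definition of $\delta$.

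The remaining, and essential, step is to show that this hom-form square is exactly the image of the square \eqref{eqn:cp2.4.3} under the adjunction bijection $\pi_{\uS(Y,Z),\,SY,\,TZ}$, whose domain $\V(\uS(Y,Z)\otimes SY,TZ)$ contains both composites of \eqref{eqn:cp2.4.3}. For the top-right composite $\underline\phi_{K,X,Z}\cdot\ub$, naturality of $\pi$ in its codomain gives $\pi(\underline\phi_{K,X,Z}\cdot\ub)=\uV(1,\underline\phi_{K,X,Z})\cdot\pi(\ub)=\uV(1,\underline\phi_{K,X,Z})\cdot S_{Y,Z}$. For the left-bottom composite $\delta\cdot(1\otimes\underline\phi_{K,X,Y})$, naturality of $\pi$ in its contravariant (middle) variable, applied to $\underline\phi_{K,X,Y}:SY\rightarrow TY$, gives $\pi\big(\delta\cdot(1\otimes\underline\phi_{K,X,Y})\big)=\uV(\underline\phi_{K,X,Y},1)\cdot\pi(\delta)=\uV(\underline\phi_{K,X,Y},1)\cdot T_{Y,Z}$. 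Thus the two composites of \eqref{eqn:cp2.4.3} transpose precisely to the two composites of the hom-form square; since $\pi$ is a bijection, one square commutes if and only if the other does, which completes the equivalence.

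The point demanding care is the bookkeeping of the two distinct instances of $\pi$: the transpose organizing \eqref{eqn:cp2.4.3} is taken with respect to $SY=\uS(K\otimes X,Y)$, whereas $\delta$ is defined as a transpose with respect to $TY=\uV(K,\uS(X,Y))$. Reconciling them is exactly the naturality of $\pi$ in its middle argument, and one must not confuse it with the covariant naturality in the codomain used for the other composite. Everything else is the formal application of Lemma~\ref{lem:cp2.4.14} together with the bijectivity of $\pi$.
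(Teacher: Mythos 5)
Your proposal is correct and takes essentially the same route as the paper: both reduce the $\V$-naturality of $\underline\phi_{K,X,Y}$ in $Y$ to the hom-form square of Lemma \ref{lem:cp2.4.14} for the $\V$-functors $\uS(K\otimes X,-)$ and $\uV(K,\uS(X,-))$, and then transpose that square across the adjunction $\pi$ using \eqref{eqn:cp2.2.5} to recover \eqref{eqn:cp2.4.3}. The only difference is one of explicitness: you spell out the two distinct naturality properties of $\pi$ (covariant in the codomain for the composite $\underline\phi_{K,X,Z}\cdot\ub$, and in the middle tensor variable for $\delta\cdot(1\otimes\underline\phi_{K,X,Y})$) that the paper compresses into its closing sentence.
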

\begin{proof}
  Because of Lemma \ref{lem:cp2.4.14}, ${\underline\phi}_{K,X,Y}$ is $\V$-natural in $Y$ if and only if for every $Z\in\ob\S$, the following  diagram commutes.
  \begin{equation*}
    \resizebox{\textwidth}{!}{
    \xymatrix@C=10em{
      \uS(Y,Z)
      \ar[r]^(0.3){\uS(K\otimes X,-)}
      \ar[d]_{\uV(K,\uS(X,-))}
      &
      \uV(\uS(K\otimes X,Y),\uS(K\otimes X,Z))
      \ar[d]^{\uV(\uS(K\otimes X,Y),{\underline\phi}_{K,X,Z})}
      \\
      \uV(\uV(K,\uS(X,Y)),\uV(K,\uS(X,Z)))
      \ar[r]^{\uV({\underline\phi}_{K,X,Y},\uV(K,\uS(X,Z)))}
      &\uV(\uS(K\otimes X,Y),\uV(K,\uS(X,Z)))
      }}
  \end{equation*}
  But \eqref{eqn:cp2.2.5}, the commutativity of the diagram above is equivalent to the commutativity of \eqref{eqn:cp2.4.3}.
\end{proof}

\begin{lemma}
\label{lem:cp2.5.5}
  Let $\V$ be a closed monoidal category so that it is a $\V$-category by Proposition \ref{pro:cp2.3.5} and Proposition \ref{pro:cp2.3.4}.
  Let $\S$ be a $\V$-category.
  Let $K\in\ob\V$ and $X,Y,Z\in \ob\S$.
  Let
  \begin{equation*}
    \delta=\pi^{-1}(\uV(K,\uS(X,-))_{Y,Z}).
  \end{equation*}
  Then the following diagram commutes.
  \begin{equation}
  \label{eqn:cp2.4.5}
    \xymatrix{
         \uS(Y,Z)
         \otimes
         \uV(K,\uS(X,Y))
         \otimes
         K
      \ar[rr]^(0.55){\delta\otimes 1}
      \ar[d]_{1\otimes\epsilon}
      &&
         \uV(K,\uS(X,Z))
         \otimes
         K
      \ar[d]^\epsilon
      \\
         \uS(Y,Z)
         \otimes
         \uS(X,Y)
      \ar[rr]^\ub
      &&
      \uS(X,Z)
    }
  \end{equation}
\end{lemma}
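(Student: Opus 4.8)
The plan is to verify \eqref{eqn:cp2.4.5} by showing that its two composites from $\uS(Y,Z)\otimes\uV(K,\uS(X,Y))\otimes K$ to $\uS(X,Z)$ coincide, once a usable description of $\delta$ is in hand. To lighten the notation write $P=\uS(X,Y)$, $Q=\uS(X,Z)$ and $R=\uS(Y,Z)$, and let $\epsilon^K$ and $\epsilon^P$ denote the counits of $-\otimes K\dashv\uV(K,-)$ and $-\otimes P\dashv\uV(P,-)$. Reading the top-and-right edges against the left-and-bottom edges, what must be proved is the identity
\begin{equation*}
  \epsilon^K_Q\cdot(\delta\otimes 1_K)=\ub_{X,Y,Z}\cdot(1_R\otimes\epsilon^K_P)
\end{equation*}
of morphisms with source $R\otimes\uV(K,P)\otimes K$ and target $Q$.

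First I would compute $\delta$ explicitly. Since $\uV(K,\uS(X,-))$ is by definition the composite of the $\V$-functors $\uS(X,-)\colon\S\rightarrow\V$ and $\uV(K,-)\colon\V\rightarrow\V$, its component at $(Y,Z)$ is $\uV(K,-)_{P,Q}\cdot\uS(X,-)_{Y,Z}$. By \eqref{eqn:cp2.2.5} the two factors are $\uV(K,-)_{P,Q}=\pi(\ub^{\V}_{K,P,Q})$ and $\uS(X,-)_{Y,Z}=\pi(\ub_{X,Y,Z})$, where $\ub^{\V}_{K,P,Q}$ is the internal composition \eqref{eqn:cp2.2.16} of $\V$. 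Using the naturality of $\pi$ in its first variable I would then obtain
\begin{equation*}
  \delta=\pi^{-1}\big(\uV(K,-)_{P,Q}\cdot\uS(X,-)_{Y,Z}\big)=\ub^{\V}_{K,P,Q}\cdot\big(\uS(X,-)_{Y,Z}\otimes 1\big).
\end{equation*}
Substituting this into $\epsilon^K_Q\cdot(\delta\otimes 1_K)$ and invoking \eqref{eqn:cp2.2.4}, which is exactly the statement $\epsilon^K_Q\cdot(\ub^{\V}_{K,P,Q}\otimes 1_K)=\epsilon^P_Q\cdot(1\otimes\epsilon^K_P)\cdot a$, I would commute $\uS(X,-)_{Y,Z}$ past the associator (naturality of $a$) and across the tensor (bifunctoriality of $\otimes$). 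This rewrites the left-hand side as $\epsilon^P_Q\cdot(\uS(X,-)_{Y,Z}\otimes 1_P)\cdot(1_R\otimes\epsilon^K_P)$, and since $\epsilon^P_Q\cdot(\uS(X,-)_{Y,Z}\otimes 1_P)=\pi^{-1}(\uS(X,-)_{Y,Z})=\ub_{X,Y,Z}$ by \eqref{eqn:cp2.2.5}, the two sides agree.

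The content is routine; the only real obstacle is coherence bookkeeping. Three internal-hom adjunctions occur at once — those attached to $K$, to $P$, and to $\uV(K,P)$ (the last underlying the very definition of $\delta$) — and a single associator $a\colon(R\otimes\uV(K,P))\otimes K\rightarrow R\otimes(\uV(K,P)\otimes K)$ is produced when passing from the parenthesization required by $\delta\otimes 1_K$ to the one required by $1_R\otimes\epsilon^K_P$. This associator is precisely the kind of coherence datum that, by the convention stated at the end of the Introduction, is suppressed from the displayed diagram, so after commuting $\uS(X,-)_{Y,Z}$ through it by naturality of $a$ the two composites become literally equal. I expect keeping these adjunctions and the single reassociation straight to be the main difficulty, each remaining step being a direct application of \eqref{eqn:cp2.2.4}, \eqref{eqn:cp2.2.5}, and the naturality of $\pi$ and $a$.
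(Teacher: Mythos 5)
Your proposal is correct and takes essentially the same route as the paper: both first decompose $\delta$ as $\ub_{K,\uS(X,Y),\uS(X,Z)}\cdot(\uS(X,-)_{Y,Z}\otimes 1)$ using $\uV(K,\uS(X,-))=\uV(K,-)\cdot\uS(X,-)$, \eqref{eqn:cp2.2.5} and the naturality of $\pi$, and then conclude from \eqref{eqn:cp2.2.4}, \eqref{eqn:cp2.2.5} and the bifunctoriality of $\otimes$ --- the paper merely organizes these identical steps as the faces of a prism-shaped commutative diagram rather than as your equational chain. Your explicit bookkeeping of the one suppressed associator, discharged by the naturality of $a$ and the paper's stated coherence convention, is consistent with how the paper's displayed diagrams are to be read.
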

\begin{proof}
  Using $\uV(K,\uS(X,-))=\uV(K,-)\cdot\uS(X,-)$ and \eqref{eqn:cp2.2.5}, we can decompose $\delta$  as follows.
  \begin{equation*}
    \resizebox{0.80\textwidth}{!}{
    \xymatrix@C=-2em{
      &
         \uV(\uS(X,Y),\uS(X,Z))
         \otimes
         \uV(K,\uS(X,Y))
      \ar[dr]^{\ub}
      &\\
         \uS(Y,Z)
         \otimes
         \uV(K,\uS(X,Y))
      \ar[rr]^\delta
      \ar[ur]^{\uS(X,-)\otimes 1}
      &&
      \uV(K,\uS(X,Z))
    }}
  \end{equation*}
  Then the top of the following diagram commutes.
  \begin{equation*}
    \resizebox{0.80\textwidth}{!}{
    \xymatrix@C=-3em{
      &
         \uV(\uS(X,Y),\uS(X,Z))
         \otimes
         \uV(K,\uS(X,Y))
         \otimes
         K
      \ar[dr]^(0.55){\ub\otimes 1}
      \ar[dd]|(0.5)\hole^(0.4){1\otimes\epsilon}
      &
      \\
         \uS(Y,Z)
         \otimes
         \uV(K,\uS(X,Y))
         \otimes
         K
      \ar[rr]^(0.4){\delta\otimes 1}
      \ar[ur]^(0.4){\uS(X,-)\otimes 1\otimes 1\quad}
      \ar[dd]_{1\otimes\epsilon}
      &&
         \uV(K,\uS(X,Z))
         \otimes
         K
      \ar[dd]^\epsilon
      \\
      &
          \uV(\uS(X,Y),\uS(X,Z))
          \otimes
          \uS(X,Y)
     \ar[dr]^\epsilon
      &
      \\
         \uS(Y,Z)
         \otimes
         \uS(X,Y)
      \ar[rr]^\ub
      \ar[ur]^(0.45){\uS(X,-)\otimes 1}
      &&
      \uS(X,Z)
    }}
  \end{equation*}
  The bottom face commutes by \eqref{eqn:cp2.2.5}. The right face commutes by \eqref{eqn:cp2.2.4}. The left face commutes for a trivial reason.
  Therefore \eqref{eqn:cp2.4.5} is a commutative diagram.
\end{proof}

\begin{proof}[Proof of Theorem \ref{thm:cp2.7.5}]
(1)
  Let $K\in\ob\V$ and $X,Y,Z\in \ob\S$.
  Consider the following diagram.
  \begin{equation*}
  \resizebox{\textwidth}{!}{
    \xymatrix@C=-2em{
        \uS(Y,Z)
        \otimes
        \uS(K\otimes X,Y)
        \otimes
        K
      \ar[rr]^{\ub\otimes 1}
      \ar[dr]^{1\otimes 1\otimes\alpha_{K,X}}
      \ar[dd]_{1\otimes{\underline\phi}_{K,X,Y}\otimes 1}
      &&
        \uS(K\otimes X,Z)
        \otimes
        K
      \ar[dd]_(0.3){{\underline\phi}_{K,X,Z}\otimes 1}
      \ar[dr]^{1\otimes\alpha_{K,X}}
      &
      \\
      &
         \uS(Y,Z)
         \otimes
         \uS(K\otimes X,Y)
         \otimes
         \uS(X,K\otimes X)
      \ar[rr]^(0.62){\ub\otimes 1}
      \ar[dd]_(0.3){1\otimes \ub}
      &&
        \uS(K\otimes X,Z)
        \otimes
        \uS(X,K\otimes X)
      \ar[dd]_{\ub}
      \\
         \uS(Y,Z)
         \otimes
         \uV(K,\uS(X,Y))
         \otimes
         K
      \ar[rr]^(0.7){\delta\otimes 1}
      \ar[dr]^{1\otimes \epsilon}
      &&
         \uV(K,\uS(X,Z))
         \otimes
         K
      \ar[dr]^{\epsilon}
      &
      \\
      &
         \uS(Y,Z)
         \otimes
         \uS(X,Y)
      \ar[rr]^{\ub}
      &&
      \uS(X,Z)
    }
  }
  \end{equation*}
  The top face commutes for a trivial reason. The left face and the right face commute because of \eqref{eqn:cp.1.1}. The front face commutes by the associativity of the composition of $\uS$. The bottom faces commutes by  Lemma \ref{lem:cp2.5.5}. Then using the face in the back, we obtain
  \[
    \pi^{-1}({\underline\phi}_{K,X,Z}\cdot\ub)=\pi^{-1}(\delta\cdot (1\otimes{\underline\phi}_{K,X,Y})).
  \]
  Therefore ${\underline\phi}_{K,X,Y}$ is $\V$-natural in $Y$ by Lemma \ref{lem:cp2.5.4}.

(2)
  Let $K\in\ob\V$ and $X,Y\in \ob\S$.
  Let $\eta_{K}^X={\underline\phi}_{K,X,K\otimes X}\cdot j_{K\otimes X}$.
  Consider the following commutative  diagram
  \begin{equation*}
  \resizebox{\textwidth}{!}{
    \xymatrix{
      \uS(K\otimes X,Y) \ar[r]^(0.45){r^{-1}} \ar[d]_1 &
      \uS(K\otimes X,Y)\otimes I \ar[r]^(0.35){j_{K\otimes X}} \ar[d]_1 &
      \uS(K\otimes X,Y)\otimes\uS(K\otimes X,K\otimes X) \ar[r]^(0.65)\ub \ar[d]_{1\otimes{\underline\phi}_{K,X,K\otimes X}} & \uS(K\otimes X,Y)\ar[d]_{{\underline\phi}_{K,X,Y}}\\
      \uS(K\otimes X,Y) \ar[r]^(0.45){r^{-1}} &
      \uS(K\otimes X,Y)\otimes I \ar[r]^(0.35){\eta_{K}^X} &
      \uS(K\otimes X,Y)\otimes\uV(K,\uS(X,K\otimes X)) \ar[r]^(0.65){\delta} & \uV(K,\uS(X,Y))}}
  \end{equation*}
  where $\delta=\pi^{-1}(\uV(K,\uS(X,-))_{K\otimes X, Y})$.
  The square on the right commutes by Lemma \ref{lem:cp2.5.4}.
  The top row is the identity because of \eqref{eqn:elno.2.3}. Thus the bottom row is ${\underline\phi}_{K,X,Y}$.
  Consider the following diagram.
  \begin{equation*}
  \resizebox{\textwidth}{!}{
    \xymatrix{
         \uS(K\otimes X,Y)
         \otimes
         I
         \otimes
         K
      \ar[r]^(0.38){\eta_{K}^X}
      &
         \uS(K\otimes X,Y)
         \otimes
         \uV(K,\uS(X,K\otimes X))
         \otimes
         K
      \ar[r]^(0.63){\delta\otimes 1}
      \ar[d]_{1\otimes\epsilon}
      &
         \uV(K,\uS(X,Y))
         \otimes
         K
      \ar[d]^\epsilon
      \\
         \uS(K\otimes X,Y)
         \otimes
         K
      \ar[u]^{1\otimes l^{-1}}
      \ar[r]^(0.42){1\otimes\varpi^{-1}(\eta_{K}^X)}
      &
         \uS(K\otimes X,Y)
         \otimes
         \uS(X,K\otimes X)
      \ar[r]^(0.63)\ub
      &
      \uS(X,Y)
    }}
  \end{equation*}
  By \eqref{eqn:elno.1.2} and the previous observation, the left upper corner is ${\underline\phi}_{K,X,Y}\otimes 1$.
  By the definition of the natural isomorphism $\varpi$ in \eqref{eqn:cp2.2.17}, the left square commutes.
  The right square also commutes by Lemma \ref{lem:cp2.5.5}.
  So if we let $\alpha_{K,X}=\varpi^{-1}({\underline\phi}_{K,X,K\otimes X}\cdot j_{K\otimes X})$, \eqref{eqn:cp.1.1} commutes.
\end{proof}

\begin{remark}
\label{rmk:cp2.5.3}
  Let $\V$ be a monoidal category.
  Let $\S$ be a $\V$-category.
  If $\V$ is closed and symmetric, then, using the result in chapter 1.10 of \cite{kelly-05}, one can show that
  $\S$ is tensored if and only if
  \begin{enumerate}
    \item there exists a $\V$-functor
        \begin{equation*}
        \label{eqn:cp2.5.2}
          -\otimes -:\V\otimes\S\rightarrow\S
        \end{equation*}
        whose object function is given by \eqref{eqn:cp2.7.1} and
    \item the morphism $\underline\phi_{K,X,Y}$ in \eqref{eqn:cp2.5.1} is $\V$-natural in $K,X,Y$.
  \end{enumerate}
\end{remark}

\section{Functoriality and Naturality of Cylinder}

Here we prove Proposition \ref{pro:esht.1.1} and Proposition \ref{pro:esht.1.2}. As we remarked at the end of the previous section,  stronger results hold if $\V$ is closed and symmetric.

\begin{lemma}
\label{lem:cp2.1.24}
  Let $\V$ be a closed monoidal category.
  Let $\S$ be a category with $\V$-structure.
  Let $K\in\ob\V$ and $X\in\ob\S$.
  Suppose that $\S$ has a $K$-cylinder of $X$.
  Then, for every $Y\in\ob\S$ and $f\in\S(K\otimes X, Y)$,
  \begin{equation}
  \label{eqn:cp2.3.1}
    \underline\phi_{K,X,Y}\cdot \varphi(f)=\varpi(\uS(X,f)\cdot\alpha_{K,X})
  \end{equation}
  holds where $\varphi$ is \eqref{eqn:cp2.4.15} and $\varpi$ is \eqref{eqn:cp2.2.17}.
\end{lemma}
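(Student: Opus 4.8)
The plan is to unwind both sides of \eqref{eqn:cp2.3.1} using the defining properties of the $K$-cylinder, the $\V$-structure axioms of Definition \ref{def:cp2.4.5}, and the defining commutative diagram \eqref{eqn:cp.1.1} for $\underline\phi_{K,X,Y}$, then show they agree. Since $\underline\phi_{K,X,Y}$ and $\varpi$ are both built from the adjunction isomorphism $\pi$ of the closed structure, I expect the cleanest route is to transport everything to the adjoint side: rather than comparing the two morphisms $I \to \uV(K,\uS(X,Y))$ directly, I would apply $\pi^{-1}$ (equivalently, tensor with $K$ and post-compose with $\epsilon_{\uS(X,Y)}$) and compare the resulting morphisms $K \to \uS(X,Y)$, where the diagram \eqref{eqn:cp.1.1} becomes directly usable.

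\emph{First} I would record the adjoint characterization of each side. For the right-hand side, by the definition of $\varpi$ in \eqref{eqn:cp2.2.17} we have $\varpi(\uS(X,f)\cdot\alpha_{K,X}) = \pi_{I,K,\uS(X,Y)}(\,(\uS(X,f)\cdot\alpha_{K,X})\cdot l_K^{-1}\,)$, so its image under $\epsilon_{\uS(X,Y)}\circ(-\otimes 1_K)$ recovers $\uS(X,f)\cdot\alpha_{K,X}$ up to the unit isomorphism $l_K$. For the left-hand side, I would feed $\varphi(f)\colon I \to \uS(K\otimes X,Y)$ through the diagram \eqref{eqn:cp.1.1}: tensoring $\underline\phi_{K,X,Y}\cdot\varphi(f)$ with $K$ and applying $\epsilon_{\uS(X,Y)}$, the bottom row of \eqref{eqn:cp.1.1} lets me replace this by the top-right composite, namely $\ub \circ (\varphi(f)\otimes\alpha_{K,X})$ suitably bracketed.

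\emph{The key step} is then to identify this composite $\ub \circ (\varphi(f)\otimes\alpha_{K,X})$ with $\uS(X,f)\cdot\alpha_{K,X}$. Here I would invoke axiom (5) of Definition \ref{def:cp2.4.5}: the formula \eqref{eqn:cp2.4.22} expresses $\uS(f,-)$—and, by the analogous argument using \eqref{eqn:cp2.4.24}, the action of a morphism in the second slot—as precisely a composite of the form $\ub\circ(1\otimes\varphi(f))\circ r^{-1}$. Thus $\uS(X,f)$ applied to $\alpha_{K,X}$ unfolds, via \eqref{eqn:cp2.4.23}--\eqref{eqn:cp2.4.24}, into exactly the multiplication $\ub$ against $\varphi(f)$ that appears on the cylinder side. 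Matching the two unfoldings—keeping careful track of which factor carries $\varphi(f)$ and which carries $\alpha_{K,X}$, and of the coherence isomorphisms $l,r,a$ relating the two bracketings—yields the equality after applying the faithful adjoint transform.

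\emph{The main obstacle} I anticipate is bookkeeping rather than conceptual: ensuring that the instance of $\ub$ produced by \eqref{eqn:cp.1.1} (where $\alpha_{K,X}$ sits in the factor $\uS(X,K\otimes X)$ and $\varphi(f)$ sits in $\uS(K\otimes X,Y)$) matches the instance of $\ub$ produced by axiom (6) when computing $\uS(X,f)\cdot\alpha_{K,X}$, which requires a careful reconciliation of the unitors $l^{-1},r^{-1}$ and the associator through the naturality of $\pi$ in its three variables and the coherence lemma \eqref{eqn:cp2.2.6}. Because the paper's convention (stated at the end of the introduction) suppresses coherence subdiagrams, I would present the argument as a single commuting diagram with the coherence cells understood, reducing the verification to the two substitutions \eqref{eqn:cp2.4.22} and the defining square \eqref{eqn:cp.1.1}.
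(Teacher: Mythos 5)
Your proposal is correct and takes essentially the same route as the paper: the paper's proof likewise transposes both sides along $\varpi^{-1}$ (tensoring with $K$ and post-composing with $\epsilon_{\uS(X,Y)}$), uses the defining square \eqref{eqn:cp.1.1} to replace $\epsilon\cdot(\underline\phi_{K,X,Y}\otimes 1)$ by $\ub\cdot(1\otimes\alpha_{K,X})$, and identifies the resulting composite $\ub\cdot(\varphi(f)\otimes\alpha_{K,X})\cdot l^{-1}$ with $\uS(X,f)\cdot\alpha_{K,X}$ via the second-slot action and naturality of $l^{-1}$, all packaged as a single $2\times 4$ commutative diagram. Two cosmetic corrections: the key unfolding is axiom (6), i.e.\ \eqref{eqn:cp2.4.24} with the form $\ub\cdot(\varphi(f)\otimes 1)\cdot l^{-1}$, rather than the axiom-(5) form $\ub\cdot(1\otimes\varphi(f))\cdot r^{-1}$ you quote, and your worry about the associator is unnecessary—only the unitor $l^{-1}$ appears, so no coherence with $a$ or $r$ is needed.
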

\begin{proof}
  Consider the following commutative diagram.
  \[
    \resizebox{0.95\textwidth}{!}{
    \xymatrix{
      K
      \ar[d]_{\alpha_{K,X}}
      \ar[r]^(0.40){l^{-1}}
      &
      I \otimes K
      \ar[d]^{\alpha_{K,X}}
      \ar[r]^(0.40){\varphi(f)}
      &
      \uS({K\otimes X},Y)\otimes K
      \ar[r]^(0.48){\underline\phi_{K,X,Y}}
      \ar[d]^{\alpha_{K,X}}
      &
      \uV(K,\uS(X,Y))\otimes K
      \ar[d]^{\epsilon_{\uS(X,Y)}}
      \\
      K\otimes X
      \ar[r]^(0.40){l^{-1}}
      &
      I \otimes\uS(X,{K\otimes X})
      \ar[r]^(0.40){\varphi(f)}
      &
      \uS({K\otimes X},Y)\otimes\uS(X,{K\otimes X})
      \ar[r]^(0.66){\ub}
      &
      \uS(X,Y)
      }}
  \]
  The right commutes by \eqref{eqn:cp.1.1}.
  The top right corner of the diagram is $\varpi^{-1}(\underline\phi_{K,X,Y}\cdot \varphi(f))$. The bottom left corner is $\uS(X,f)\cdot\alpha_{K,X}$.
\end{proof}

\begin{definition}
\label{def:cp2.6.3}
  Let $\V$ be a closed monoidal category.
  Let $\S$ be a category with $\V$-structure.
  Let $K,L\in\ob\V$ and $X\in\ob\S$.
  Suppose that there exist a $K$-cylinder $K\otimes X$ of $X$ and a $L$-cylinder $L\otimes X$ of $X$.
  Then, for every $u\in\V(K,L)$,
  we denote by
  \begin{equation}
  \label{eqn:cp2.6.15}
    u\otimes X:K\otimes X\rightarrow L\otimes X
  \end{equation}
  the unique morphism
  that forces the following diagram commutes.
  \begin{equation}
  \label{eqn:cp2.1.1}
    \xymatrix{
      K \ar[r]^(0.3){\alpha_{K,X}} \ar[d]_u
        & \uS(X,K\otimes X) \ar[d]^{\uS(X,u\otimes X)}\\
      L \ar[r]_(0.3){\alpha_{L,X}} & \uS(X,L\otimes X)}
  \end{equation}
  Since $\underline\phi_{K,X,L\otimes X}$ is an isomorphism, $u\otimes X$ exists uniquely by Lemma \ref{lem:cp2.1.24}.
\end{definition}

\begin{definition}
\label{def:cp2.6.5}
  Let $\V$ be a closed monoidal category.
  Let $\S$ be a category with $\V$-structure.
  Let $K\in\ob\V$ and $X,Y\in\ob\S$.
  Suppose that there exist a $K$-cylinder $K\otimes X$ of $X$ and a $K$-cylinder $K\otimes Y$ of $Y$.
  Then, for every $v\in\S(X,Y)$,
  we denote by
  \begin{equation}
  \label{eqn:cp2.6.16}
    K\otimes v:K\otimes X\rightarrow K\otimes Y
  \end{equation}
  the unique morphism  that forces the following diagram commutes.
  \begin{equation}
  \label{eqn:cp2.1.2}
    \xymatrix{
      K \ar[rr]^(0.4){\alpha_{K,X}} \ar[d]_{\alpha_{K,Y}}
        && \uS(X,K\otimes X) \ar[d]^{\uS(X,K\otimes v)}\\
      \uS(Y,K\otimes Y) \ar[rr]_{\uS(v,K\otimes Y)}
        && \uS(X, K\otimes Y)}
  \end{equation}
  Since
  $\underline\phi_{K,X,K\otimes Y}$ is an isomorphism, $K\otimes v$ exists uniquely by Lemma \ref{lem:cp2.1.24}.
\end{definition}

\begin{proof}[Proof of Proposition \ref{pro:esht.1.1}]
  Since $\underline\phi_{K,X,X\otimes'K}$ is an isomorphism,
  there exists an unique morphism $f\in\S(K\otimes X,X\otimes'K)$
  making the diagram
  \begin{equation*}
    \xymatrix{
      K \ar[r]^(0.3){\alpha_{K,X}} \ar[d]_{\id_K}
        & \uS(X,K\otimes X) \ar[d]^{\uS(X,f)}\\
      K \ar[r]_(0.3){\alpha_{K,X}'} & \uS(X,X\otimes'K)}
  \end{equation*}
  commute by Lemma \ref{lem:cp2.1.24}.
\end{proof}

\begin{proof}[Proof of Proposition \ref{pro:esht.1.2}]
(1)
  Let $K,L\in\ob\V$ and let $X,Y\in\ob\S$.
  Let $u\in\V(K,L)$ and $v\in(X,Y)$.
  Using the uniqueness of of $u\otimes X$ and $K\otimes v$,
  one can easily prove that $-\otimes X$ and $K\otimes -$ are functors.
  To show that they form a bifunctor, we consider the following diagram.
  \[
    \xymatrix{
      K \ar[rr]^{\alpha_{K,X}} \ar[dr]^{\alpha_{K,Y}} \ar[dd]_u
        & & \uS(X,K\otimes X) \ar[dr]^{\;\;\uS(X,K\otimes v)}
        \ar[dd]|\hole^(0.7){\uS(X,u\otimes X)}& \\
      & \uS(Y,K\otimes Y) \ar[rr]^(0.35){\uS(v,K\otimes Y)}
        \ar[dd]^(0.7){\uS(Y,u\otimes Y)}
        & & \uS(X,K\otimes Y) \ar[dd]^{\uS(X,u\otimes Y)}\\
      L \ar[rr]^(0.2){\alpha_{L,X}}|(0.42)\hole \ar[dr]_{\alpha_{L,Y}}
        & & \uS(X,L\otimes X) \ar[dr]^{\;\;\uS(X,L\otimes v)} & \\
      & \uS(Y,L\otimes Y) \ar[rr]_{\uS(v,L\otimes Y)}& & \uS(X,L\otimes Y)}
  \]
  Since $\uS(-,-)$ is a bifunctor, the front face is a commutative diagram.
  The top and the bottom faces commute by the definition of $K\otimes v$ and $L\otimes v$ respectively. The left and the back faces commute by the definition $u\otimes Y$ and $u\otimes X$ respectively.
  So,
  \[
    \uS(X,(u\otimes Y)\cdot (K\otimes v))\cdot \alpha_{K,X}
    =
    \uS(X,(L\otimes v)\cdot (u\otimes X))\cdot \alpha_{K,X}.
  \]
  Then, by Lemma \ref{lem:cp2.1.24}
  \[
    \underline\phi_{K,X,Y\otimes L}\cdot\varphi((u\otimes Y)\cdot (K\otimes v))
    =
    \underline\phi_{K,X,Y\otimes L}\cdot\varphi((L\otimes v)\cdot (u\otimes X)).
  \]
  Hence,
  $(u\otimes Y)\cdot (K\otimes v)=(L\otimes v)\cdot (u\otimes X)$ holds.

(2)
  Let $K,L\in\ob\V$. Let $W,X,Y,Z\in\ob\S$.

(a)
  $\underline\phi_{-,X,Y}$: Let $u\in\V(K,L)$.
  Consider the following commutative diagram.
  \[
    \xymatrix{
      \uS(K\otimes X,Y)\otimes K
      \ar[rr]^(0.4){1\otimes\alpha_{K,X}}
      &&
      \uS(K\otimes X,Y)\otimes\uS(X,K\otimes X)
      \ar[dr]^{\ub}
      &
      \\
      \uS(L\otimes X, Y)\otimes K
      \ar[u]^{\uS(u\otimes X,Y)\otimes 1}
      \ar[d]_{1\otimes u}
      \ar[rr]^(0.4){1\otimes\alpha_{K,X}}
      && \uS(L\otimes X,Y)\otimes\uS(X,K\otimes X)
      \ar[u]^{\uS(u\otimes X,Y)\otimes 1}
      \ar[d]_{1\otimes\uS(X,u\otimes X)}
      & \uS(X,Y)
      \\
      \uS(L\otimes X,Y)\otimes L
      \ar[rr]_(0.4){1\otimes\alpha_{L,X}}
      && \uS(L\otimes X,Y)\otimes\uS(X,L\otimes X)
      \ar[ur]_{\ub}
      &}
  \]
  The bottom square commutes by \eqref{eqn:cp2.1.1}.
  The triangle commutes by \eqref{eqn:cp2.4.2}, \eqref{eqn:cp2.4.22} and~ \eqref{eqn:cp2.4.24}.
  From \eqref{eqn:cp.1.1}, we know $\pi(\underline\phi_{K,X,Y}\cdot \uS(u\otimes X,Y))$ is the upper half of the boundary of the  diagram and
  $\pi(\uV(u,\uS(X,Y))\cdot \underline\phi_{L,X,Y})$ is the lower half .
  So, the diagram
  \begin{equation*}
    \xymatrix{
      \uS(K\otimes X,Y) \ar[rr]^{\underline\phi_{K,X,Y}}
        && \uV(K,\uS(X,Y))\\
      \uS(L\otimes X,Y) \ar[rr]_{\underline\phi_{L,X,Y}} \ar[u]^{\uS(u\otimes X,Y)} &
        & \uV(L,\uS(X,Y)) \ar[u]_{\uV(u,\uS(X,Y))}}
  \end{equation*}
  commutes and $\underline\phi_{K,X,Y}$ is natural in $K$.

(b) $\underline\phi_{K,-,Y}$:
  Let $u\in\S(W,X)$.
  Consider the following commutative diagram.
  \[
    \xymatrix@C=4em{
      \uS(K\otimes W,Y)\otimes K
      \ar[r]^(0.4){1\otimes\alpha_{K,W}}
      & \uS(K\otimes W,Y)\otimes\uS(W,K\otimes W)
      \ar@/^3pc/[ddr]^(0.7){\ub}
      \\
      \uS(K\otimes X,Y)\otimes K
      \ar[r]^(0.4){1\otimes \alpha_{K,W}}
      \ar@/_3pc/[ddr]_(0.35){1\otimes\alpha_{K,X}}
      \ar[u]^{\uS(K\otimes u,Y)\otimes 1}
      & \uS(K\otimes X,Y)\otimes\uS(W,K\otimes W)
      \ar[u]^{\uS(K\otimes u,Y)\otimes 1}
      \ar[d]_{1\otimes\uS(W,K\otimes u)}
      &
      \\
      &\uS(K\otimes X,Y)\otimes\uS(W,K\otimes X)
      \ar[r]^(0.67)\ub
      &\uS(W,Y)
      \\
      &\uS(K\otimes X, Y)\otimes\uS(X,K\otimes X)
      \ar[r]_(0.67){\ub}
      \ar[u]^{1\otimes\uS(u,K\otimes X)}
      & \uS(X,Y)
      \ar[u]_{\uS(u,Y)}}
  \]
  The lower left commutes by \eqref{eqn:cp2.1.2}.
  The upper right commutes by \eqref{eqn:cp2.4.2}, \eqref{eqn:cp2.4.22} and~ \eqref{eqn:cp2.4.24}.
  The lower right commutes by \eqref{eqn:cp2.4.2} and~ \eqref{eqn:cp2.4.22}.
  From \eqref{eqn:cp.1.1}, we know $\pi(\underline\phi_{K,W,Y}\cdot\uS(K\otimes u,Y))$ is the upper half of the boundary of the  diagram and
  $\pi(\uV(K,\uS(u,Y))\cdot\underline\phi_{K,X,Y})$ is the lower half.
  So, the diagram
  \[
    \xymatrix{
      \uS(K\otimes W,Y) \ar[rr]^{\underline\phi_{K,W,Y}}
        && \uV(K,\uS(W,Y))\\
      \uS(K\otimes X,Y) \ar[rr]_{\underline\phi_{K,X,Y}} \ar[u]^{\uS(K\otimes u,Y)} &
        & \uV(K,\uS(X,Y)) \ar[u]_{\uV(K,\uS(u,Y))}}
  \]
  commutes and $\underline\phi_{K,X,Y}$ is natural in $X$.

(c) $\underline\phi_{K,X,-}$:
  Let $u\in\S(Y,Z)$.
  Consider the following commutative diagram.
  \[
    \xymatrix{
      \uS(K\otimes X, Y)\otimes K
      \ar[rr]^(0.43){1\otimes\alpha_{K,X}}
      \ar[d]_{\uS(K\otimes X,u)\otimes 1}
      &&\uS(K\otimes X,Y)\otimes\uS(X,K\otimes X)
      \ar[r]^(0.7){\ub}
      \ar[d]_{\uS(K\otimes X,u)\otimes 1}
      & \uS(X,Y)
      \ar[d]^{\uS(X,u)}
      \\
      \uS(K\otimes X,Z)\otimes K
      \ar[rr]_(0.43){1\otimes\alpha_{K,X}}
      && \uS(K\otimes X,Z)\otimes\uS(X,K\otimes X)
      \ar[r]_(0.7){\ub}
      &\uS(X,Z)}
  \]
  The right square commutes by \eqref{eqn:cp2.4.2} and~ \eqref{eqn:cp2.4.24}.
  From \eqref{eqn:cp.1.1}, we know $\pi(\uV(K,\uS(X,u))\cdot \underline\phi_{K,X,Y})$ is the upper right corner of the  diagram and
  $\pi(\underline\phi_{K,X,Z}\cdot \uS(K\otimes X,u))$ is the lower left.
  So, the diagram
  \[
    \xymatrix{
      \uS(K\otimes X,Y) \ar[rr]^{\underline\phi_{K,X,Y}} \ar[d]_{\uS(K\otimes X,u)}
        && \uV(K,\uS(X,Y)) \ar[d]^{\uV(K,\uS(X,u))}\\
      \uS(K\otimes X,Z) \ar[rr]_{\underline\phi_{K,X,Z}}&
        & \uV(K,\uS(X,Z))}
  \]
  commutes and $\underline\phi_{K,X,Y}$ is natural in $Y$.
\end{proof}

\section{Tensor-Closed $\V$-Module}
\label{sec:cp2.tcvm}

Here we prove Proposition \ref{pro:cp2.7.12} and Theorem \ref{thm:cp2.6.9}.
First, we generalize some properties of monoidal categories to that of $\V$-modules.

\begin{lemma}
\label{lem:cp2.7.1}
  Let $\V$ be a monoidal category.
  Let $\S$ be a $\V$-module.
  Then, for every $K\in\ob\V$ and $X\in\ob\S$, the following diagram commutes.
  \begin{equation}
  \label{eqn:cp2.7.6}
    \xymatrix{
      I\otimes (K\otimes X) \ar[rr]^a  \ar[dr]_l && (I\otimes K)\otimes X \ar[dl]^{l}  & \\
      & K\otimes X &}
  \end{equation}
\end{lemma}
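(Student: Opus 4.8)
The diagram \eqref{eqn:cp2.7.6} is the $\V$-module counterpart of the monoidal unit lemma \eqref{eqn:cp2.2.6}; unwound, it asserts the equality
\begin{equation*}
  l_{K\otimes X}\cdot a_{I,K,X}=l_K\otimes 1_X
\end{equation*}
of morphisms $(I\otimes K)\otimes X\rightarrow K\otimes X$ of $\S$, where $l_{K\otimes X}$ and $a_{I,K,X}$ are the module unit \eqref{eqn:cp2.6.2} and module associativity \eqref{eqn:cp2.6.1}, while $l_K:I\otimes K\rightarrow K$ is the unit of $\V$. The plan is to reproduce the coherence argument that yields \eqref{eqn:cp2.2.6}, with the module pentagon \eqref{eqn:cp2.6.4} and module triangle \eqref{eqn:cp2.6.5} playing the roles of the monoidal pentagon \eqref{eqn:elno.1.1} and triangle \eqref{eqn:elno.1.2}. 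These axioms have precisely the same shape as their monoidal ancestors, the one difference being that the last tensor slot is now an object of $\S$ acted on by $\V$; since in the Joyal--Street derivation (Proposition 1.1 in \cite{joyal-street-93}) the only object ever placed in that slot is $X$ and it never has to be moved, the argument transcribes to the present setting, where only the final slot may carry an object of $\S$.

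First I would use that the module unit $l:I\otimes(-)\rightarrow\id_\S$ is a natural isomorphism: for $\phi,\psi:U\rightarrow V$ in $\S$ naturality gives $\phi\cdot l_U=l_V\cdot(1_I\otimes\phi)$, and likewise for $\psi$, so that $\phi=\psi$ as soon as $1_I\otimes\phi=1_I\otimes\psi$. It is therefore enough to check the displayed identity after applying $I\otimes(-)$. Next I would rewrite $1_I\otimes l_{K\otimes X}$ by the instance of the module triangle \eqref{eqn:cp2.6.5} at the $\V$-object $I$ and the $\S$-object $K\otimes X$, trading it for $r_I\otimes 1_{K\otimes X}$ and $a_{I,I,K\otimes X}$, and expand the resulting associativity composite by the instance of the module pentagon \eqref{eqn:cp2.6.4} at $(I,I,K)$ in the $\V$-slots and $X$ in the $\S$-slot. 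Collapsing the repeated unit factor with the help of the already-established identity $r_I=l_I$ and of \eqref{eqn:cp2.2.6} in $\V$, and using the naturality of $a$, then identifies the two sides.

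The main obstacle is bookkeeping rather than ideas: every symbol $a$, $l$, $r$ occurring in \eqref{eqn:cp2.6.4} and \eqref{eqn:cp2.6.5} is a blend of genuinely monoidal operations in $\V$ and of the module operations on $\S$, so one must track at each node which tensor product and which structure morphism is intended, and must select the pentagon instance so that the module triangle applies without any permutation of $X$. Once the slots are labelled correctly the chase is formally identical to the monoidal case, and cancelling the isomorphisms $l$ and $a$ at the end delivers \eqref{eqn:cp2.7.6}.
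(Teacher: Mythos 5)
Your proposal is correct and matches the paper's proof, which is exactly the one-line remark that \eqref{eqn:cp2.7.6} follows from the module pentagon \eqref{eqn:cp2.6.4} and module triangle \eqref{eqn:cp2.6.5} by the same argument as for \eqref{eqn:cp2.2.6} (the Joyal--Street coherence chase, with the $\S$-object kept fixed in the last slot). You have simply spelled out the details the paper leaves to the reader, and your cancellation step goes through --- whether one collapses the unit factor via $r_I=l_I$ together with \eqref{eqn:cp2.2.6} as you suggest, or directly via the triangle \eqref{eqn:elno.1.2} in $\V$ and naturality of the module associator.
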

\begin{proof}
  Like the diagram \eqref{eqn:cp2.2.6}, it also follows from \eqref{eqn:cp2.6.4} and \eqref{eqn:cp2.6.5}.
\end{proof}

\begin{lemma}
\label{eqn:cp2.7.2}
  Let $\V$ be a monoidal category.
  Let $\S$ be a tensor-closed $\V$-module.
  Then, for every $f\in\S(X,Y)$ and $Z\in\ob\S$, the following diagram commutes.
  \begin{equation}
  \label{eqn:cp2.7.9}
    \xymatrix{
      \uS(Y,Z)\otimes X \ar[r]^{1\otimes f} \ar[d]_{\uS(f,Z)\otimes 1} & \uS(Y,Z)\otimes Y \ar[d]^{\epsilon^Y_Z}\\
      \uS(X,Z)\otimes X \ar[r]^{\epsilon^X_Z} & Z}
  \end{equation}
\end{lemma}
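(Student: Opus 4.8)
The plan is to deduce \eqref{eqn:cp2.7.9} from the naturality of the adjunction isomorphism $\phi_{K,X,Y}$ of \eqref{eqn:cp2.6.7}, exactly as the closed-monoidal statement \eqref{eqn:cp2.2.14} follows from the naturality of $\pi$. Both composites in \eqref{eqn:cp2.7.9} are morphisms $\uS(Y,Z)\otimes X\rightarrow Z$ of $\S$, so since $\phi_{\uS(Y,Z),X,Z}$ is a bijection it suffices to show that the two have the same image under it; I claim this common image is $\uS(f,Z)$.

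First I would treat the lower-left composite $\epsilon^X_Z\cdot(\uS(f,Z)\otimes 1)$. Here $\uS(f,Z)\otimes 1$ is the image of $\uS(f,Z)\colon\uS(Y,Z)\rightarrow\uS(X,Z)$ under the functor $-\otimes X$, and the naturality of $\phi$ in its first variable $K$ yields the triangle identity $\phi^{-1}_{\uS(Y,Z),X,Z}(k)=\epsilon^X_Z\cdot(k\otimes 1)$ for every $k\colon\uS(Y,Z)\rightarrow\uS(X,Z)$, where $\epsilon^X_Z=\phi^{-1}_{\uS(X,Z),X,Z}(\id)$ is the counit \eqref{eqn:cp2.7.13}. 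Taking $k=\uS(f,Z)$ gives $\phi_{\uS(Y,Z),X,Z}\bigl(\epsilon^X_Z\cdot(\uS(f,Z)\otimes 1)\bigr)=\uS(f,Z)$.

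Next I would treat the upper-right composite $\epsilon^Y_Z\cdot(1\otimes f)$ using the naturality of $\phi$ in its second variable with respect to $f\colon X\rightarrow Y$, which is the commutative square
\[
\xymatrix{
  \S(\uS(Y,Z)\otimes Y,Z)\ar[r]^(0.56){\phi_{\uS(Y,Z),Y,Z}}\ar[d]_{\S(1\otimes f,Z)} & \V(\uS(Y,Z),\uS(Y,Z))\ar[d]^{\V(1,\uS(f,Z))}\\
  \S(\uS(Y,Z)\otimes X,Z)\ar[r]_(0.56){\phi_{\uS(Y,Z),X,Z}} & \V(\uS(Y,Z),\uS(X,Z)).}
\]
Chasing the counit $\epsilon^Y_Z=\phi^{-1}_{\uS(Y,Z),Y,Z}(\id)$ from the top-left corner, the top-then-right route gives $\uS(f,Z)\cdot\id=\uS(f,Z)$, while the left-then-bottom route gives $\phi_{\uS(Y,Z),X,Z}\bigl(\epsilon^Y_Z\cdot(1\otimes f)\bigr)$, since $1\otimes f$ equals $\uS(Y,Z)\otimes f$ and hence $\S(1\otimes f,Z)$ sends $\epsilon^Y_Z$ to $\epsilon^Y_Z\cdot(1\otimes f)$. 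Commutativity of the square therefore forces $\phi_{\uS(Y,Z),X,Z}\bigl(\epsilon^Y_Z\cdot(1\otimes f)\bigr)=\uS(f,Z)$ as well, and injectivity of $\phi_{\uS(Y,Z),X,Z}$ finishes the argument.

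I expect the only real work to be bookkeeping rather than computation: one must keep straight that $1\otimes f$ is the action of $\uS(Y,Z)\otimes-$ on $f$ whereas $\uS(f,Z)\otimes 1$ is $-\otimes X$ applied to $\uS(f,Z)$, and that the two composites are controlled by the naturality of $\phi$ in its first and its second variable respectively. There is no genuine obstacle; as in the monoidal case \eqref{eqn:cp2.2.14}, the lemma is a purely formal consequence of the naturality of $\phi$ in all three variables.
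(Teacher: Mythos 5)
Your proposal is correct and is exactly the paper's argument: the paper's proof consists of the single sentence that the diagram ``follows from $\phi_{K,X,Y}$ being natural in $K$ and $X$,'' and your two naturality chases (in the first variable, yielding $\phi_{\uS(Y,Z),X,Z}\bigl(\epsilon^X_Z\cdot(\uS(f,Z)\otimes 1)\bigr)=\uS(f,Z)$, and in the second, yielding $\phi_{\uS(Y,Z),X,Z}\bigl(\epsilon^Y_Z\cdot(1\otimes f)\bigr)=\uS(f,Z)$, followed by injectivity of $\phi$) are precisely the details that sentence leaves implicit. No gaps; your write-up simply makes the paper's terse proof explicit.
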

\begin{proof}
  It follows from  $\phi_{K,X,Y}$ being natural in $K$ and $X$.
\end{proof}

Proposition \ref{pro:cp2.7.12} is a generalization of Proposition \ref{pro:cp2.3.5}. It is probably well-known. But we could not find a reference. So we give the proof.

\begin{proof}[Proof of Proposition \ref{pro:cp2.7.12}]
  Let $A,B,C,D\in\ob\S$.
  For the associativity law, we need to show that the following diagram commutes.
  \begin{equation}
  \label{eqn:cp2.3.21}
    \xymatrix@R=-0em{
    \uS(C,D)\otimes(\uS(B,C)\otimes\uS(A,B))\ar[r]^(0.56)\ub \ar[dd]_{\alpha^{-1}}^\cong & \uS(C,D)\otimes\uS(A,C) \ar[dr]^(0.6)\ub&\\
    && \uS(A,D)\\
    (\uS(C,D)\otimes\uS(B,C))\otimes\uS(A,B)\ar[r]^(0.56)\ub  & \uS(B,D)\otimes\uS(A,B) \ar[ur]_(0.6)\ub&}
  \end{equation}
  Consider the following diagram.
  \begin{equation*}
  \resizebox{\textwidth}{!}{
    \xymatrix@C=1.5em@R=1.2em{
      (\uS(C,D)\otimes(\uS(B,C)\otimes\uS(A,B)))\otimes A \ar[r]^(0.58)\ub \ar[d]_{a}^\cong & (\uS(C,D)\otimes\uS(A,C))\otimes A \ar[r]^(0.6)\ub \ar[d]_{a}^\cong & \uS(A,D)\otimes A \ar[r]^(0.66)\epsilon  & D \ar[d]^=
      \\
      \uS(C,D)\otimes((\uS(B,C)\otimes\uS(A,B))\otimes A) \ar[r]^(0.58)\ub \ar[d]_{a}^\cong & \uS(C,D)\otimes(\uS(A,C)\otimes A) \ar[r]^(0.6)\epsilon & \uS(C,D)\otimes C \ar[r]^(0.66)\epsilon \ar[d]^= & D \ar[d]^=
      \\
      \uS(C,D)\otimes(\uS(B,C)\otimes(\uS(A,B)\otimes A)) \ar[r]^(0.58)\epsilon \ar[d]_{{a}^{-1}}^\cong & \uS(C,D)\otimes(\uS(B,C)\otimes B) \ar[r]^(0.6)\epsilon \ar[d]_{{a}^{-1}}^\cong & \uS(C,D)\otimes C \ar[r]^(0.66)\epsilon & D \ar[d]^=
      \\
      (\uS(C,D)\otimes\uS(B,C))\otimes(\uS(A,B)\otimes A) \ar[r]^(0.58)\epsilon \ar[d]_= & (\uS(C,D)\otimes\uS(B,C))\otimes B \ar[r]^(0.6)\ub  & \uS(B,D)\otimes B \ar[r]^(0.66)\epsilon \ar[d]^= & D \ar[d]^=
      \\
      (\uS(C,D)\otimes\uS(B,C))\otimes(\uS(A,B)\otimes A) \ar[r]^(0.58)\ub \ar[d]_{{a}^{-1}}^\cong & \uS(B,D)\otimes(\uS(A,B)\otimes A) \ar[r]^(0.6)\epsilon \ar[d]_{{a}^{-1}}^\cong & \uS(B,D)\otimes B \ar[r]^(0.66)\epsilon & D \ar[d]^=
      \\
      ((\uS(C,D)\otimes\uS(B,C))\otimes\uS(A,B))\otimes A \ar[r]^(0.58)\ub & (\uS(B,D)\otimes\uS(A,B))\otimes A \ar[r]^(0.6)\ub & \uS(A,D)\otimes A \ar[r]^(0.66)\epsilon & D\\}}
  \end{equation*}
  All the inner squares commute by the definition \eqref{eqn:cp2.7.11} of $\ub$ or trivial reasons. Therefore it is a commutative diagram.
  By \eqref{eqn:cp2.6.4}, the first column is $a^{-1}\otimes 1$. Thus $\pi^{-1}(\eqref{eqn:cp2.3.21})$ is a commutative diagram.

  Next, we prove (5) in Definition \ref{def:cp2.4.5}.
  Let $f\in\S(A,B)$. Consider the following commutative diagram.
  \begin{equation*}
    \xymatrix@C=1.5em@R=1.7em{
      \uS(B,C)\otimes A \ar[r]^{r^{-1}}\ar[dr]^1 &\uS(B,C)\otimes\i\otimes A \ar[r]^(0.45){\varphi(f)}\ar[d]^l &\uS(B,C)\otimes\uS(A,B)\otimes A\ar[r]^(0.6)\ub\ar[d]^\epsilon & \uS(A,C)\otimes A \ar[d]^\epsilon\\
      & \uS(B,C)\otimes A\ar[r]^{f} & \uS(B,C)\otimes B\ar[r]^\epsilon& C}
  \end{equation*}
  The triangle commutes by \eqref{eqn:cp2.6.5} and two squares commute by the definitions.
  The upper-right corner of the diagram is $\phi^{-1}(\eqref{eqn:cp2.4.22})$.
  By \eqref{eqn:cp2.7.9},
  the bottom row is
  \begin{equation*}
    \uS(B,C)\otimes A\xrightarrow{\uS(f,C)\otimes 1} \uS(A,C)\otimes A\xrightarrow{\epsilon} C,
  \end{equation*}
  which is $\phi^{-1}(\uS(f,C))$. Therefore (5) holds.

  Finally, we prove (6) in Definition \ref{def:cp2.4.5}. Let $g\in\S(B,C)$. The upper-right corner of the following commutative diagram is $\phi^{-1}(\eqref{eqn:cp2.4.24})$.
  \begin{equation*}
    \xymatrix@C=1.5em@R=1.7em{
      \uS(A,B)\otimes A \ar[r]^{l^{-1}}\ar[d]^\epsilon &\i\otimes\uS(A,B)\otimes A \ar[r]^(0.45){\varphi(g)}\ar[d]^\epsilon &\uS(B,C)\otimes\uS(A,B)\otimes A\ar[r]^(0.6)\ub\ar[d]^\epsilon & \uS(A,C)\otimes A \ar[d]^\epsilon\\
      B\ar[r]^{l^{-1}} & \i\otimes B\ar[r]^{\varphi(g)} & \uS(B,C)\otimes B\ar[r]^\epsilon& C}
  \end{equation*}
  By the definition of $\varphi$, the bottom row is $g$.
  Then by the naturality of $\phi_{X,Y,Z}$ in $Z$, the bottom-left corner is $\phi^{-1}(\uS(A,g))$. Therefore (6) holds.
\end{proof}

The following lemma is a generalization of Lemma \ref{lem:cp2.4.14}.

\begin{lemma}
\label{lem:cp2.8.15}
  Let $\V$ be a monoidal category.
  Let $\A$ be a $\V$-category.
  Let $\S$ be a tensor-closed $\V$-module so that $\S$ is a $\V$-category by Proposition \ref{pro:cp2.7.12} and Proposition \ref{pro:cp2.3.4}.
  Let $S,T:\A\rightarrow \S$ be $\V$-functors.
  Then $\{\alpha_{A}:I\rightarrow\uS(SA,TA)\mid A\in\ob\A\}$ is a $\V$-natural transformation from $S$ to $T$ if and only if
  for every $A,B\in\ob\A$, the following diagram commutes.
  \begin{equation}
    \xymatrix@C=6em{
      \uA(A,B)
      \ar[r]^{S_{A,B}}
      \ar[d]_{T_{A,B}}
      & \uS(SA,SB)
      \ar[d]^{\uS(1,\varphi^{-1}(\alpha_B))}
      \\
      \uS(TA,TB)
      \ar[r]^{\uS(\varphi^{-1}(\alpha_A),1)}
      & \uS(SA,TB)}
  \end{equation}
\end{lemma}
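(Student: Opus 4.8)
The plan is to transport the one-line argument of Lemma \ref{lem:cp2.4.14} to the present setting, reducing everything to the defining compositions \eqref{eqn:cp2.4.22} and \eqref{eqn:cp2.4.24} of the $\V$-structure that Proposition \ref{pro:cp2.7.12} places on $\S$, whose unit isomorphism is $\varphi$ rather than $\varpi$.

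First I would unwind the definition of $\V$-naturality. Applying diagram \eqref{eqn:cp2.2.8} with $\B$ taken to be $\S$, the family $\alpha$ is $\V$-natural from $S$ to $T$ if and only if, for all $A,B\in\ob\A$, the two composites
\[
  \uA(A,B)\xrightarrow{l^{-1}} I\otimes\uA(A,B)\xrightarrow{\alpha_B\otimes S_{A,B}}\uS(SB,TB)\otimes\uS(SA,SB)\xrightarrow{\ub}\uS(SA,TB)
\]
and
\[
  \uA(A,B)\xrightarrow{r^{-1}}\uA(A,B)\otimes I\xrightarrow{T_{A,B}\otimes\alpha_A}\uS(TA,TB)\otimes\uS(SA,TA)\xrightarrow{\ub}\uS(SA,TB)
\]
agree as morphisms $\uA(A,B)\to\uS(SA,TB)$.

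Next I would identify each leg with an edge of the target square. Writing $\alpha_B\otimes S_{A,B}=(\alpha_B\otimes 1)\cdot(1\otimes S_{A,B})$ and using the naturality of $l^{-1}$ to commute $S_{A,B}$ past the unitor, the left leg becomes $[\ub\cdot(\alpha_B\otimes 1)\cdot l^{-1}]\cdot S_{A,B}$. Because $\varphi(\varphi^{-1}(\alpha_B))=\alpha_B$, the bracketed composite is exactly the composition \eqref{eqn:cp2.4.24} defining $\uS(SA,\varphi^{-1}(\alpha_B))$, so the left leg equals $\uS(1,\varphi^{-1}(\alpha_B))\cdot S_{A,B}$. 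Symmetrically, writing $T_{A,B}\otimes\alpha_A=(1\otimes\alpha_A)\cdot(T_{A,B}\otimes 1)$ and using the naturality of $r^{-1}$, the right leg becomes $[\ub\cdot(1\otimes\alpha_A)\cdot r^{-1}]\cdot T_{A,B}$, whose bracketed composite is the composition \eqref{eqn:cp2.4.22} defining $\uS(\varphi^{-1}(\alpha_A),TB)$; hence the right leg equals $\uS(\varphi^{-1}(\alpha_A),1)\cdot T_{A,B}$.

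Finally I would conclude that the equality of the two legs is precisely the identity $\uS(1,\varphi^{-1}(\alpha_B))\cdot S_{A,B}=\uS(\varphi^{-1}(\alpha_A),1)\cdot T_{A,B}$, which is the commutativity of the displayed square. I do not expect any genuine obstacle: the only care required is the routine bookkeeping of commuting $S_{A,B}$ and $T_{A,B}$ past the unitors via the naturality of $l$ and $r$, together with the observation that conditions (5) and (6) of Definition \ref{def:cp2.4.5} for the $\V$-structure of Proposition \ref{pro:cp2.7.12} have exactly the same shape as in the closed monoidal case, now with $\uV$ replaced by $\uS$ and $\varpi$ replaced by $\varphi$. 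This is what allows the proof of Lemma \ref{lem:cp2.4.14} to transport essentially verbatim.
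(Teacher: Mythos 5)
Your proposal is correct and follows essentially the same route as the paper, whose entire proof of this lemma is the one-line citation of \eqref{eqn:cp2.4.22} and \eqref{eqn:cp2.4.24}; your unwinding of \eqref{eqn:cp2.2.8} and the unitor-naturality bookkeeping is exactly the implicit content of that citation, mirroring Lemma \ref{lem:cp2.4.14} with $\uV$ replaced by $\uS$ and $\varpi$ by $\varphi$.
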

\begin{proof}
  It also follows from \eqref{eqn:cp2.4.22} and \eqref{eqn:cp2.4.24}.
\end{proof}

\begin{definition}
  Let $\V$ be a closed monoidal category.
  Let $\S$ be a tensor-closed $\V$-module.
  Let $X\in\ob\S$.
  For every $K,L\in\ob\V$, we define a morphism
  \begin{equation}
  \label{eqn:cp2.7.15}
    (-\otimes X)_{K,L}:\uV(K,L)\rightarrow\uS(K\otimes X,L\otimes X)
  \end{equation}
  of $\V$
  by
  \begin{equation}
  \label{eqn:cp2.7.16}
    (-\otimes X)_{K,L}=\phi((\epsilon_L\otimes 1)\cdot a).
  \end{equation}
\end{definition}

The following four lemmas are well-known at least for closed monoidal categories.

\begin{lemma}
\label{lem:cp2.7.5}
  Let $\V$ be a closed monoidal category.
  Let $\S$ be a tensor-closed $\V$-module.
  Then, for every $K,L,M\in\ob\V$ and $X\in\ob\S$, the following diagram commutes.
  \begin{equation}
  \label{eqn:cp2.7.19}
    \xymatrix{
      \uV(L,M)\otimes\uV(K,L)
      \ar[rr]^\ub
      \ar[d]_{1\otimes (-\otimes X)_{K,L}}
      &&
      \uV(K,M) \ar[dd]^{(-\otimes X)_{K,M}}
      \\
      \uV(L,M)\otimes\uS(K\otimes X,L\otimes X)
      \ar[d]_{(-\otimes X)_{L,M}\otimes 1}&&
      \\
      \uS(L\otimes X,M\otimes X)\otimes\uS(K\otimes X,L\otimes X) \ar[rr]^(0.6)\ub
        && \uS(K\otimes X,M\otimes X)}
  \end{equation}
\end{lemma}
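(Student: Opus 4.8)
The plan is to verify \eqref{eqn:cp2.7.19} by transposing across the adjunction $\phi$ of \eqref{eqn:cp2.6.7}. Both composites in the diagram are morphisms $\uV(L,M)\otimes\uV(K,L)\to\uS(K\otimes X,M\otimes X)$ of $\V$, and since $\phi_{\uV(L,M)\otimes\uV(K,L),\,K\otimes X,\,M\otimes X}$ is a bijection it suffices to show the two morphisms agree after applying $\phi^{-1}$: concretely, after tensoring on the right with $K\otimes X$ and post-composing with the counit $\epsilon_{M\otimes X}$ of \eqref{eqn:cp2.7.13}. So the whole problem reduces to comparing two morphisms $(\uV(L,M)\otimes\uV(K,L))\otimes(K\otimes X)\to M\otimes X$ of $\S$.

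First I would unwind the top-and-right composite. Its transpose is $\epsilon_{M\otimes X}\cdot((-\otimes X)_{K,M}\otimes 1)\cdot(\ub\otimes 1)$. By the definition \eqref{eqn:cp2.7.16} of $(-\otimes X)_{K,M}$ the first two factors collapse to $(\epsilon_M\otimes 1)\cdot a$, where now $\epsilon_M$ is the counit \eqref{eqn:elno.5.3} of the closed structure of $\V$ and $a$ is a module associator. Sliding $\ub$ through $a$ by naturality and then invoking the definition \eqref{eqn:cp2.2.4} of $\ub$ on $\uV$ (i.e.\ $\epsilon_M\cdot(\ub\otimes 1)=\epsilon_M\cdot(1\otimes\epsilon_L)\cdot a$ in $\V$), this transpose becomes a composite built only from module associators and the $\V$-counits $\epsilon_L,\epsilon_M$.

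Next I would unwind the left-and-bottom composite the same way. Using the definition \eqref{eqn:cp2.7.11} of $\ub$ on $\uS$, its transpose is $\epsilon_{M\otimes X}\cdot(1\otimes\epsilon_{L\otimes X})\cdot a$ precomposed with the two copies of $-\otimes X$; applying the counit relation \eqref{eqn:cp2.7.16} for $(-\otimes X)_{K,L}$ and for $(-\otimes X)_{L,M}$, together with naturality of the counit (Lemma~\ref{eqn:cp2.7.2}) and of the module associator, turns it into a composite of module associators and the same $\V$-counits $\epsilon_L,\epsilon_M$.

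At this point both transposes are morphisms $(\uV(L,M)\otimes\uV(K,L))\otimes(K\otimes X)\to M\otimes X$ assembled from module associators and $\epsilon_L,\epsilon_M$, differing only in the bracketing of the associators applied to $(\uV(L,M)\otimes\uV(K,L))\otimes(K\otimes X)$. I would then assemble them into a single commutative diagram whose inner cells commute by naturality of the module associator in each variable, naturality of $\epsilon$, and---this is the heart of the matter---the module coherence axiom \eqref{eqn:cp2.6.4}, with the variables $(K,L,M,X)$ there specialized to $(\uV(L,M),\uV(K,L),K,X)$. This axiom is exactly what identifies the single associator $a$ of $\V$ hidden in the $\uV$-composition against the three nested module associators of $\S$ produced by $\ub$ on $\uS$ and by the two copies of $-\otimes X$. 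Once the two transposes are shown equal, injectivity of $\phi$ yields the commutativity of \eqref{eqn:cp2.7.19}.

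The main obstacle is precisely this coherence step: every other cell is a routine naturality square (and the paper's convention of suppressing coherence subdiagrams keeps them out of the way), but reconciling the one $\V$-associator carried by $\ub$ on $\uV$ with the module associators that surface on the other side is the single place where the module axiom \eqref{eqn:cp2.6.4} must genuinely be used.
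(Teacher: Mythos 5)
Your proposal is correct and takes essentially the same route as the paper: the paper's proof likewise transposes \eqref{eqn:cp2.7.19} along $\phi$ (tensoring with $K\otimes X$ and composing with the counits into $M\otimes X$) and verifies the resulting diagram cell-by-cell from the defining equations \eqref{eqn:cp2.2.4}, \eqref{eqn:cp2.7.11} and \eqref{eqn:cp2.7.16}. The only difference is presentational: the paper suppresses all associators under its stated coherence convention, whereas you correctly make explicit that reinstating them requires precisely the module pentagon \eqref{eqn:cp2.6.4} at $(\uV(L,M),\uV(K,L),K,X)$ --- a genuine hidden dependence worth noting --- though your citation of Lemma \ref{eqn:cp2.7.2} is superfluous, since the relevant middle cells commute by bifunctoriality of $\otimes$ alone.
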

\begin{proof}
  From \eqref{eqn:cp2.2.4}, \eqref{eqn:cp2.7.11} and \eqref{eqn:cp2.7.16}, all the inner diagrams of the following diagram commute.
  \begin{equation*}
  \resizebox{0.95\textwidth}{!}{
    \xymatrix@C=-3em@R=1.2em{
      \uV(L,M)\otimes\uV(K,L)\otimes K\otimes X \ar[rrr]^\ub \ar[dr]^\epsilon \ar[dd]_{(-\otimes X)_{K,L}}
        &&& \uV(K,M)\otimes K\otimes X \ar[dddd]^{(-\otimes X)_{K,M}} \ar[ldd]_\epsilon\\
      & \uV(L,M)\otimes L\otimes X \ar[dd]_{(-\otimes X)_{L,M}} \ar[dr]^\epsilon & &\\
      \uV(L,M)\otimes\uS(K\otimes X,L\otimes X)\otimes K\otimes X \ar[dd]_{(-\otimes X)_{L,M}} \ar[ur]^\epsilon
      && M\otimes X&\\
      &\uS(L\otimes X,M\otimes X)\otimes L\otimes X \ar[ur]_(0.65)\epsilon
      &&\\
      \uS(L\otimes X,M\otimes X)\otimes\uS(K\otimes X,L\otimes X)\otimes K\otimes X \ar[ur]^\epsilon \ar[rrr]^(0.58)\ub
        &&& \uS(K\otimes X,M\otimes X)\otimes K\otimes X \ar[luu]_\epsilon}}
  \end{equation*}
  But taking $\phi$ on this diagram, we get \eqref{eqn:cp2.7.19}.
\end{proof}

\begin{lemma}
\label{lem:cp2.7.14}
  Let $\V$ be a closed monoidal category so that $\V$ is a $\V$-category by Proposition \ref{pro:cp2.3.5} and Proposition \ref{pro:cp2.3.4}.
  Let $\S$ be a tensor-closed $\V$-module so that $\S$ is a $\V$-category by Proposition \ref{pro:cp2.7.12} and Proposition \ref{pro:cp2.3.4}.
  Let $X\in\ob\S$.
  Then,
  morphisms in \eqref{eqn:cp2.7.15} form a $\V$-functor
  \begin{equation}
  \label{eqn:cp2.7.14}
    -\otimes X:\V\rightarrow\S.
  \end{equation}
\end{lemma}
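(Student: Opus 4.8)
The plan is to check the two axioms of a $\V$-functor from Definition \ref{def:cp2.3.2} for the data consisting of the object assignment $K\mapsto K\otimes X$ and the morphisms $(-\otimes X)_{K,L}$ of \eqref{eqn:cp2.7.15}. The compatibility with composition \eqref{eqn:elno.2.6} is immediate: instantiating that diagram with $\A=\V$, $\B=\S$ and $T=-\otimes X$ produces precisely \eqref{eqn:cp2.7.19}, whose commutativity is Lemma \ref{lem:cp2.7.5} (the left-hand vertical there is the two-step composite realizing $(-\otimes X)_{L,M}\otimes(-\otimes X)_{K,L}$). So nothing new is needed for that axiom, and only the unit axiom \eqref{eqn:elno.2.7}, namely $(-\otimes X)_{K,K}\cdot j_K=j_{K\otimes X}$, requires argument.

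For the unit axiom I would first identify both units explicitly. Since $\V$ carries the $\V$-structure of Proposition \ref{pro:cp2.3.5}, in which $\varpi$ plays the role of $\varphi$, Proposition \ref{pro:cp2.3.4} gives $j_K=\varpi(1_K)=\pi(l_K)$; equivalently, the counit identity $\epsilon_K\cdot(j_K\otimes 1_K)=l_K$ holds in $\V$. Likewise, combining Proposition \ref{pro:cp2.3.4} with Definition \ref{eqn:def:1.17} gives $j_{K\otimes X}=\varphi(1_{K\otimes X})=\phi(l_{K\otimes X})$.

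Next I would transpose the composite $(-\otimes X)_{K,K}\cdot j_K$ across the adjunction $\phi$. Using the definition $(-\otimes X)_{K,K}=\phi((\epsilon_K\otimes 1)\cdot a)$ from \eqref{eqn:cp2.7.16} and the naturality of $\phi_{K,X,Y}$ in its $\V$-variable along $j_K\colon I\to\uV(K,K)$, one obtains $(-\otimes X)_{K,K}\cdot j_K=\phi\big(((\epsilon_K\otimes 1)\cdot a)\cdot(j_K\otimes 1_{K\otimes X})\big)$. Since $\phi$ is bijective and $j_{K\otimes X}=\phi(l_{K\otimes X})$, the axiom reduces to the single identity $(\epsilon_K\otimes 1_X)\cdot a\cdot(j_K\otimes 1_{K\otimes X})=l_{K\otimes X}$ of morphisms $I\otimes(K\otimes X)\to K\otimes X$ in $\S$. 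To prove it I would invoke naturality of the module associativity $a$ in its first variable to move $j_K$ past $a$, then use the counit identity $\epsilon_K\cdot(j_K\otimes 1_K)=l_K$ to collapse the left-hand factor to $l_K\otimes 1_X$, and finally apply Lemma \ref{lem:cp2.7.1} (diagram \eqref{eqn:cp2.7.6}) to recognize the resulting composite as $l_{K\otimes X}$.

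The only delicate part is keeping the associativity and unit coherence isomorphisms straight throughout the transposition, in particular the direction of $a$ in \eqref{eqn:cp2.7.16}; once these are pinned down, each step is forced by a single previously established fact (the counit relation for $j_K$, naturality of $a$, and \eqref{eqn:cp2.7.6}), so the argument introduces no genuinely new computation beyond Lemma \ref{lem:cp2.7.5} and Lemma \ref{lem:cp2.7.1}.
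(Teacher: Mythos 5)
Your proposal is correct and takes essentially the same route as the paper's proof: the composition axiom \eqref{eqn:elno.2.6} is exactly Lemma \ref{lem:cp2.7.5}, and the unit axiom is handled by transposing $(-\otimes X)_{K,K}\cdot j_K$ through the naturality of $\phi$ along $j_K=\pi(l_K)$ and then verifying $(\epsilon_K\otimes 1)\cdot a\cdot(j_K\otimes 1)=l_{K\otimes X}$ via naturality of $a$, the counit identity, and \eqref{eqn:cp2.7.6}, which is precisely the commutative diagram the paper draws. The coherence bookkeeping you flag (the direction of $a$ in \eqref{eqn:cp2.7.16}) is handled correctly, so there is no gap.
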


\begin{proof}
  The commutativity of \eqref{eqn:elno.2.6} holds for $-\otimes X$ by Lemma \ref{lem:cp2.7.5}.
  Next, since $\pi(l_K)=j_K$ by \eqref{eqn:cp2.4.8},
  the following diagram commutes.
  \begin{equation*}
    \xymatrix{
      \uV(K,K)\otimes (K\otimes X) \ar[r]^a & (\uV(K,K)\otimes K)\otimes X \ar[r]^(0.65)\epsilon & K\otimes X\\
      I\otimes (K\otimes X) \ar[r]^a \ar[u]^{j_K} & (I\otimes K)\otimes X \ar[ur]_{l} \ar[u]^{j_K} & }
  \end{equation*}
  Then from \eqref{eqn:cp2.7.6} and \eqref{eqn:cp2.4.8}, $(-\otimes X)_{K,K}\cdot j_K=\phi(l_{K\otimes X})=j_{K\otimes X}$.
  Thus the commutativity of \eqref{eqn:elno.2.7} also holds for $-\otimes X$.
\end{proof}

\begin{remark}
  Let $u\in\uV(K,L)$.
  By the definition of $\varpi$,  the following
  diagram commutes.
  \begin{equation*}
    \xymatrix{
      \uV(K,L)\otimes (K\otimes X) \ar[r]^a & (\uV(K,L)\otimes K)\otimes X \ar[r]^(0.65)\epsilon & L\otimes X\\
      I\otimes (K\otimes X) \ar[r]^a \ar[u]^{\varpi(u)} & (I\otimes K)\otimes X \ar[r]^(0.56){l\otimes 1} \ar[u]^{\varpi(u)} & K\otimes X \ar[u]_{u\otimes 1}}
  \end{equation*}
  The upper left corner is $\phi^{-1}((-\otimes X)_{K,L}\cdot \varpi(u))$.
  The lower right corner is $(u\otimes X)\cdot l_{K\otimes X}$ by \eqref{eqn:cp2.7.6}.
  Therefore $(-\otimes X)_{K,L}\cdot\varpi(u)=\varphi(u\otimes X)$  holds.
  So, we may say that
  the $\V$-functor in \eqref{eqn:cp2.7.14} extends
  the ordinary functor
  $-\otimes X:\V\rightarrow\S$ in \eqref{eqn:cp2.6.0}.
\end{remark}

\begin{lemma}
\label{lem:cp2.7.15}
  Let $\V$ be a closed monoidal category so that $\V$ is a $\V$-category by Proposition \ref{pro:cp2.3.5} and Proposition \ref{pro:cp2.3.4}.
  Let $\S$ be a tensor-closed $\V$-module so that $\S$ is a $\V$-category by Proposition \ref{pro:cp2.7.12} and Proposition \ref{pro:cp2.3.4}.
  Let $K,L\in\ob\V$ and $X\in\ob\S$.
  Then $(-\otimes X)_{K,L}$ in \eqref{eqn:cp2.7.15} is $\V$-natural in $L$.
\end{lemma}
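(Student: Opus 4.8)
The plan is to recognize the assertion as an instance of the reformulation of $\V$-naturality in Lemma \ref{lem:cp2.4.14}, and then to transpose the resulting square across the adjunction $\pi$ so that it becomes exactly the diagram \eqref{eqn:cp2.7.19} of Lemma \ref{lem:cp2.7.5}, which is already available. First I would pin down the two $\V$-functors $\V\rightarrow\V$ in the variable $L$ between which $(-\otimes X)_{K,-}$ is to be $\V$-natural, namely $S=\uV(K,-)$ and the composite $T=\uS(K\otimes X,-)\circ(-\otimes X)$; the latter is a $\V$-functor since $-\otimes X$ is one by Lemma \ref{lem:cp2.7.14} and $\uS(K\otimes X,-)$ is the representable $\V$-functor on the $\V$-category $\S$. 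Here $SL=\uV(K,L)$ and $TL=\uS(K\otimes X,L\otimes X)$, and setting $\varpi^{-1}(\alpha_L)=(-\otimes X)_{K,L}$ identifies our morphisms with the components in the sense of Lemma \ref{lem:cp2.4.14}.

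Applying Lemma \ref{lem:cp2.4.14} to $S$ and $T$, the $\V$-naturality in $L$ is equivalent to the commutativity, for all $L,M\in\ob\V$, of the square whose top-right path is $\uV(1,(-\otimes X)_{K,M})\cdot S_{L,M}$ with $S_{L,M}=\pi(\ub_{K,L,M})$, and whose left-bottom path is $\uV((-\otimes X)_{K,L},1)\cdot T_{L,M}$ with $T_{L,M}=\uS(K\otimes X,-)_{L\otimes X,M\otimes X}\cdot(-\otimes X)_{L,M}$ and $\uS(K\otimes X,-)_{L\otimes X,M\otimes X}=\pi(\ub_{K\otimes X,L\otimes X,M\otimes X})$. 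I would then transpose this square across $\pi$, turning a diagram valued in the double-hom objects into one with domain $\uV(L,M)\otimes\uV(K,L)$ and codomain $\uS(K\otimes X,M\otimes X)$. For the top-right path, naturality of the counit $\epsilon$ in its codomain variable collapses $\uV(1,(-\otimes X)_{K,M})$ against $\pi(\ub_{K,L,M})$ to give $(-\otimes X)_{K,M}\cdot\ub_{K,L,M}$. For the left-bottom path, the pre-composition identity \eqref{eqn:cp2.2.14} converts $\uV((-\otimes X)_{K,L},1)$ into a tensor factor, and the relation $\epsilon\cdot(\pi(\ub)\otimes 1)=\ub$ applied to $\ub_{K\otimes X,L\otimes X,M\otimes X}$ yields $\ub_{K\otimes X,L\otimes X,M\otimes X}\cdot((-\otimes X)_{L,M}\otimes(-\otimes X)_{K,L})$. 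Equating the two transposes is precisely the commutativity of \eqref{eqn:cp2.7.19}, so Lemma \ref{lem:cp2.7.5} finishes the argument.

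The main obstacle I anticipate is the bookkeeping in the transposition step: one must use the two distinct functorialities of the internal hom correctly — the counit naturality governing post-composition $\uV(1,g)$ versus the identity \eqref{eqn:cp2.2.14} governing pre-composition $\uV(f,1)$ — while keeping the coherence isomorphisms implicit, as the paper's convention permits, so that the two reformulated composites line up on the nose with the edges of \eqref{eqn:cp2.7.19}. Once the square is transposed, no further computation is required, since Lemma \ref{lem:cp2.7.5} supplies exactly the needed equality.
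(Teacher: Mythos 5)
Your proposal is correct and follows the paper's own argument exactly: the paper likewise invokes Lemma \ref{lem:cp2.4.14} to reduce the $\V$-naturality in $L$ to the commutativity of the square with edges $\uV(K,-)_{L,M}$, $\uV(1,(-\otimes X)_{K,M})$, $\uS(K\otimes X,-)_{L\otimes X,M\otimes X}\cdot(-\otimes X)_{L,M}$ and $\uV((-\otimes X)_{K,L},1)$, and then observes that this square is $\pi(\eqref{eqn:cp2.7.19})$, so Lemma \ref{lem:cp2.7.5} concludes. Your explicit unwinding of the transposition — counit naturality for the post-composition edge and \eqref{eqn:cp2.2.14} for the pre-composition edge — merely spells out the bookkeeping the paper compresses into the phrase ``the above diagram is $\pi(\eqref{eqn:cp2.7.19})$,'' and it is done correctly.
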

\begin{proof}
  Because of Lemma \ref{lem:cp2.4.14}, it is enough to show that the following diagram commutes: let $M\in\ob\V$.
  \begin{equation*}
  \resizebox{0.95\textwidth}{!}{
    \xymatrix@C=6.5em{
      \uV(L,M) \ar[r]^(0.45){\uV(K,-)_{L,M}} \ar[d]_{(-\otimes X)_{L,M}} & \uV(\uV(K,L),\uV(K,M)) \ar[dd]^{\uV(1,(-\otimes X)_{K,M})} \\
      \uS(L\otimes X,M\otimes X)\ar[d]_{\uS(K\otimes X,-)_{L\otimes X,M\otimes X}} &\\
      \uV(\uS(K\otimes X,L\otimes X),\uS(K\otimes X,M\otimes X)) \ar[r]^(0.54){\uV((-\otimes X)_{K,L},1)}  & \uV(\uV(K,L),\uS(K\otimes X,M\otimes X))}}
  \end{equation*}
  But the above diagram is $\pi(\eqref{eqn:cp2.7.19})$.
\end{proof}

\begin{lemma}
\label{lem:cp2.7.16}
  Let $\V$ be a closed monoidal category so that $\V$ is a $\V$-category by Proposition \ref{pro:cp2.3.5} and Proposition \ref{pro:cp2.3.4}.
  Let $\S$ be a tensor-closed $\V$-module so that $\S$ is a $\V$-category by Proposition \ref{pro:cp2.7.12} and Proposition \ref{pro:cp2.3.4}.
  Let $X,Y\in\ob\S$.
  Then $\epsilon_Y:\uS(X,Y)\otimes X\rightarrow Y$ in \eqref{eqn:cp2.7.13} is $\V$-natural in $Y$.
\end{lemma}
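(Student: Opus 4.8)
The plan is to exhibit $\epsilon$ as a candidate $\V$-natural transformation between two $\V$-functors and then to reduce the $\V$-naturality to an identity of ordinary morphisms into $Z$ by means of the adjunction isomorphism $\phi$. First I would fix $X$ and set $T=\id_\S$ and $S=(-\otimes X)\cdot\uS(X,-):\S\to\S$. Here $\uS(X,-):\S\to\V$ is a $\V$-functor with $\uS(X,-)_{Y,Z}=\pi(\ub_{X,Y,Z})$ by \eqref{eqn:cp2.2.5}, and $-\otimes X:\V\to\S$ is a $\V$-functor by Lemma \ref{lem:cp2.7.14}; hence $S$ is a $\V$-functor with component $S_{Y,Z}=(-\otimes X)_{\uS(X,Y),\uS(X,Z)}\cdot\pi(\ub_{X,Y,Z})$, while $SY=\uS(X,Y)\otimes X$ and $TY=Y$. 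Setting $\alpha_Y=\varphi(\epsilon_Y):I\to\uS(SY,TY)$, we have $\varphi^{-1}(\alpha_Y)=\epsilon_Y$, so that $\epsilon$ is $\V$-natural in $Y$ precisely when $\{\alpha_Y\}_Y$ is a $\V$-natural transformation $S\to T$. By Lemma \ref{lem:cp2.8.15} this holds if and only if, for all $Y,Z\in\ob\S$,
\[
  \uS(1,\epsilon_Z)\cdot S_{Y,Z}=\uS(\epsilon_Y,1)\colon\uS(Y,Z)\to\uS(\uS(X,Y)\otimes X,Z).
\]

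To verify this equality of morphisms of $\V$, I would apply $\phi^{-1}$: since $\phi_{\uS(Y,Z),\,\uS(X,Y)\otimes X,\,Z}$ is a bijection, it suffices to check that the two morphisms $\uS(Y,Z)\otimes(\uS(X,Y)\otimes X)\to Z$ obtained as $\epsilon_Z\cdot(-\otimes 1)$ agree. For the right-hand side, Lemma \ref{eqn:cp2.7.2} applied to $f=\epsilon_Y$ gives at once $\epsilon_Z\cdot(\uS(\epsilon_Y,Z)\otimes 1)=\epsilon_Z\cdot(1\otimes\epsilon_Y)$. For the left-hand side I would first unwind $\phi^{-1}(S_{Y,Z})$: splitting the composite $S_{Y,Z}$ by naturality of $\phi$ in its first variable, inserting the formula \eqref{eqn:cp2.7.16} for $(-\otimes X)_{K,L}$, using naturality of the module associativity $a$, and applying $\pi^{-1}\pi=\id$ (so that $\uS(X,-)_{Y,Z}=\pi(\ub_{X,Y,Z})$ contributes $\ub_{X,Y,Z}$), all of this collapses to $\phi^{-1}(S_{Y,Z})=(\ub_{X,Y,Z}\otimes 1)\cdot a^{-1}$. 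Pulling $\epsilon_Z$ through $\uS(1,\epsilon_Z)$ by the covariant naturality of the counit of $-\otimes P\dashv\uS(P,-)$ rewrites the left-hand side as $\epsilon_Z\cdot(\ub_{X,Y,Z}\otimes 1)\cdot a^{-1}$, and the defining identity \eqref{eqn:cp2.7.11} of $\ub_{X,Y,Z}$ turns $\epsilon_Z\cdot(\ub_{X,Y,Z}\otimes 1)$ into $\epsilon_Z\cdot(1\otimes\epsilon_Y)\cdot a$. Since $a\cdot a^{-1}=1$, the left-hand side becomes $\epsilon_Z\cdot(1\otimes\epsilon_Y)$, matching the right-hand side, which proves the square and hence the lemma.

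I expect the main obstacle to be bookkeeping rather than anything conceptual. One must keep careful track of which counit occurs at each stage, that is, which object occupies the first slot of $\uS$: the two instances of $\epsilon$ produced by \eqref{eqn:cp2.7.11} are the counits for $X$ and for $Y$ respectively, and the one pulled out of $\uS(1,\epsilon_Z)$ is the counit for $X$ again, so the types only match after \eqref{eqn:cp2.7.11} is invoked. One must also thread the module associativity $a$ through the several reassociations (it is exactly the $a$ of \eqref{eqn:cp2.7.11} that cancels the $a^{-1}$ coming from the unwinding of $S_{Y,Z}$), relying on the coherence convention stated at the end of Section~1 to suppress the canonical isomorphisms. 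The technical heart is the computation $\phi^{-1}(S_{Y,Z})=(\ub_{X,Y,Z}\otimes 1)\cdot a^{-1}$, i.e. the unwinding of the composite $\V$-functor $S$ into a single morphism; once that is in hand the defining identity of $\ub$ finishes the argument. Alternatively, the whole computation can be packaged as one large commutative prism assembled from two copies of $\epsilon$, in the style of the proof of Lemma \ref{lem:cp2.7.5}.
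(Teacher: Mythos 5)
Your proposal is correct and takes essentially the same route as the paper: the paper likewise reduces the $\V$-naturality via Lemma \ref{lem:cp2.8.15} to the square \eqref{eqn:cp2.7.22}, which is exactly your identity $\uS(1,\epsilon_Z)\cdot S_{Y,Z}=\uS(\epsilon_Y,1)$ with $S_{Y,Z}=(-\otimes X)_{\uS(X,Y),\uS(X,Z)}\cdot\uS(X,-)_{Y,Z}$, and then verifies it after applying $\phi^{-1}$ using the same ingredients \eqref{eqn:cp2.7.9}, \eqref{eqn:cp2.7.11}, \eqref{eqn:cp2.2.5}, \eqref{eqn:cp2.7.16} and the naturality of the counit $\epsilon^{\uS(X,Y)\otimes X}$. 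The only cosmetic difference is presentational: the paper packages the cancellations into one large commutative diagram, while you carry out the identical computation equationally.
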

\begin{proof}
  Because of Lemma \ref{lem:cp2.8.15}, it is enough to show that the following diagram commutes: let $Z\in\ob\S$.
  \begin{equation}
  \label{eqn:cp2.7.22}
    \xymatrix{
      \uS(Y,Z) \ar[rrr]^{\uS(\epsilon_Y,1)} \ar[d]_{\uS(X,-)_{Y,Z}}
      &&& \uS(\uS(X,Y)\otimes X,Z) \\
      \uV(\uS(X,Y),\uS(X,Z)) \ar[rrr]^(0.44){(-\otimes X)_{\uS(X,Y),\uS(X,Z)}}
      &&&
      \uS(\uS(X,Y)\otimes X,\uS(X,Z)\otimes X) \ar[u]_(0.55){\uS(1,\epsilon_Z)}}
  \end{equation}
  Consider the following diagram.
  \begin{equation*}
  \resizebox{0.95\textwidth}{!}{
    \xymatrix@C=0em@R=1.2em{
      \uS(Y,Z)\otimes\uS(X,Y)\otimes X
      \ar[rr]^{\uS(\epsilon_Y,1)}
      \ar[dd]_{\uS(X,-)_{Y,Z}}
      \ar[dr]^{1\otimes\epsilon}
      \ar[dddr]^{\ub\otimes 1}
      &
      & \uS(\uS(X,Y)\otimes X,Z)\otimes\uS(X,Y)\otimes X
      \ar[ddddd]^{\epsilon}
      \\
      & \uS(Y,Z)\otimes Y
      \ar[ddddr]^\epsilon
      &
      \\
      \uV(\uS(X,Y),\uS(X,Z))\otimes\uS(X,Y)\otimes X
      \ar[dd]_{(-\otimes X)_{\uS(X,Y),\uS(X,Z)}}
      \ar[dr]_{\epsilon\otimes 1}
      &
      &
      \\
      & \uS(X,Z)\otimes  X
      \ar[ddr]_\epsilon
      &
      \\
      \uS(\uS(X,Y)\otimes X,\uS(X,Z)\otimes X)\otimes\uS(X,Y)\otimes X
      \ar[d]_{\uS(1,\epsilon_Z)}
      \ar[ur]^\epsilon
      &&
      \\
      \uS(\uS(X,Y)\otimes X,Z)\otimes\uS(X,Y)\otimes X
      \ar[rr]^{\epsilon}
      && Z
      }}
  \end{equation*}
  From the upper right to the lower left, all the inner diagrams commute because of \eqref{eqn:cp2.7.9}, \eqref{eqn:cp2.7.11}, \eqref{eqn:cp2.2.5}, \eqref{eqn:cp2.7.16} and the naturality of $\epsilon^{\uS(X,Y)\otimes X}$.
  Therefore $\phi^{-1}(\eqref{eqn:cp2.7.22})$ commutes.
\end{proof}

\begin{definition}
\label{def:cp2.7.10}
  Let $\V$ be a closed monoidal category.
  Let $\S$ be a tensor-closed $\V$-module.
  For every $K\in\ob\V$ and $X,Y\in\ob\S$,
  we define a morphism $\underline\phi_{K,X,Y}$ of $\V$
  as the unique morphism that makes the following diagram commute
  \begin{equation}
  \label{eqn:cp2.7.4}
    \xymatrix{
      \S((L\otimes K)\otimes X,Y)\ar[r]^{\phi} &
      \V(L\otimes K,\uS(X,Y))\ar[r]^{\pi} &
      \V(L,\uV(K,\uS(X,Y)))\\
      \S(L\otimes (K\otimes X),Y)\ar[u]^{a^*} \ar[rr]^{\phi} && \V(L,\uS(K\otimes X,Y))\ar[u]_{\V(L,\underline{\phi}_{K,X,Y})}.}
  \end{equation}
  where $L\in\ob\V$. We note that if $\S$ is $\V$, the diagram \eqref{eqn:cp2.7.4} is reduced to \eqref{eqn:cp2.2.1}.
\end{definition}

\begin{lemma}[cf.~1.8.(j) in \cite{kelly-05}]
\label{lem:cp2.7.13}
  Let $\V$ be a closed monoidal category.
  Let $\S$ be a tensor-closed $\V$-module.
  Then for every $K\in\ob\V$ and $X,Y\in\ob\S$,
  $\underline\phi_{K,X,Y}$ in Definition \ref{def:cp2.7.10} is the inverse of the composition
  \begin{equation*}
    \uS(K\otimes X,Y)\xleftarrow{\uS(1,\epsilon_Y)} \uS(K\otimes X,\uS(X,Y)\otimes X) \xleftarrow{(-\otimes X)_{K,\uS(X,Y)}} \uV(K,\uS(X,Y)).
  \end{equation*}
\end{lemma}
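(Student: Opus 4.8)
The plan is to exhibit the displayed composition as a two-sided inverse of $\underline\phi_{K,X,Y}$ by playing the defining universal property \eqref{eqn:cp2.7.4} off against the triangle identities of the two adjunctions involved. Write $P=\uV(K,\uS(X,Y))$ and let
\[
  \theta=\uS(1,\epsilon_Y)\cdot(-\otimes X)_{K,\uS(X,Y)}\colon P\rightarrow\uS(K\otimes X,Y)
\]
denote the composition in question, where $\epsilon_Y$ is \eqref{eqn:cp2.7.13}.

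First I would rewrite $\theta$ as a single $\phi$-transpose. Unwinding the definition \eqref{eqn:cp2.7.16} of $(-\otimes X)_{K,\uS(X,Y)}$ and using that $\phi_{P,K\otimes X,-}$ is natural in its last variable, post-composition with $\epsilon_Y$ passes through $\phi$, so that
\[
  \phi_{P,K\otimes X,Y}^{-1}(\theta)=\epsilon_Y\cdot(\epsilon_{\uS(X,Y)}\otimes 1)\cdot a,
\]
where $\epsilon_{\uS(X,Y)}\colon P\otimes K\rightarrow\uS(X,Y)$ is the counit \eqref{eqn:elno.5.3} of the closed structure of $\V$ and $a$ is the module associator appearing in \eqref{eqn:cp2.7.16}.

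Next I would note, directly from \eqref{eqn:cp2.7.4}, that for each $L$ the map $\V(L,\underline\phi_{K,X,Y})$ equals the composite $\pi\cdot\phi\cdot a^{*}\cdot\phi^{-1}$ of four bijections and is therefore itself a bijection; since $\V(-,\underline\phi_{K,X,Y})$ is automatically natural, $\underline\phi_{K,X,Y}$ is already an isomorphism by Yoneda, and it suffices to verify the single identity $\underline\phi_{K,X,Y}\cdot\theta=\id_P$. For this I would instantiate \eqref{eqn:cp2.7.4} at $L=P$ and evaluate it on the element $\phi^{-1}(\theta)$ determined above. The upper route through the square yields exactly $\underline\phi_{K,X,Y}\cdot\theta$. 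Along the lower route, $a^{*}$ precomposes with the associator and cancels the $a$ inside $\phi^{-1}(\theta)$, leaving $\epsilon_Y\cdot(\epsilon_{\uS(X,Y)}\otimes 1)$; applying $\phi_{P\otimes K,X,Y}$ returns $\epsilon_{\uS(X,Y)}$, because this morphism is precisely $\phi_{P\otimes K,X,Y}^{-1}(\epsilon_{\uS(X,Y)})$; and finally $\pi_{P,K,\uS(X,Y)}$ carries the counit $\epsilon_{\uS(X,Y)}$ to $\id_P$ by the triangle identity for $-\otimes K\dashv\uV(K,-)$. Hence $\underline\phi_{K,X,Y}\cdot\theta=\id_P$, and so $\theta=\underline\phi_{K,X,Y}^{-1}$.

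The main obstacle is the first step: the careful bookkeeping that collapses the two-stage composite $\theta$ into a single transpose and keeps track of the associator $a$ (and its inverse), so that it cancels cleanly against the $a^{*}$ of \eqref{eqn:cp2.7.4}. Once $\theta$ is written in the form $\phi\bigl(\epsilon_Y\cdot(\epsilon_{\uS(X,Y)}\otimes 1)\cdot a\bigr)$, the remainder is a mechanical application of the two triangle identities, so no genuine difficulty remains after the bookkeeping is settled.
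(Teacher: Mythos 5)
Your proof is correct and takes essentially the same route as the paper's: both unwind $(-\otimes X)_{K,\uS(X,Y)}$ via \eqref{eqn:cp2.7.16} so that the module associator cancels against the $a^{*}$ in the defining square \eqref{eqn:cp2.7.4}, then use the counit descriptions of $\phi$ and $\pi$ (the triangle identities) to get $\underline\phi_{K,X,Y}\cdot\theta=\id$, with invertibility of $\underline\phi_{K,X,Y}$ supplying the other side. The only cosmetic difference is that the paper chases an arbitrary $f\in\V(L,\uV(K,\uS(X,Y)))$ through the diagrams \eqref{eqn:cp2.7.24} and \eqref{eqn:cp2.7.25} for every $L$, whereas you specialize to $L=\uV(K,\uS(X,Y))$ and evaluate on the single element $\phi^{-1}(\theta)$ --- a Yoneda-style repackaging of the same computation.
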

\begin{proof}
  Consider the following commutative diagram.
  \begin{equation}
  \label{eqn:cp2.7.24}
    \xymatrix{
      \V(L,\uV(K,\uS(X,Y)))
      \ar[d]_{(-\otimes X)_{K,\uS(X,Y)}}
      &
      \\
      \V(L,\uS(K\otimes X,\uS(X,Y)\otimes X))
      \ar[r]_\cong^\pi
      \ar@{.>}[d]_\epsilon
      &
      \S(L\otimes (K\otimes X),\uS(X,Y)\otimes X)
      \ar[d]^\epsilon
      \\
      \V(L,\uS(K\otimes X,Y))
      \ar@{.>}[r]_\cong^\pi
      &
      \S(L\otimes (K\otimes X),Y)
    }
  \end{equation}
  Let $f\in\V(L,\uV(K,\uS(X,Y)))$.
  From \eqref{eqn:cp2.7.16}, we know
  the composition of the straight arrows in the following commutative diagram is the image of $f$ in $\S(L\otimes (K\otimes X),Y)$ computed along the straight arrows in \eqref{eqn:cp2.7.24}.
  \begin{equation}
  \label{eqn:cp2.7.25}
    \xymatrix@C=4em{
      L\otimes(K\otimes X)
      \ar[r]^(0.37){f\otimes 1}
      \ar@{.>}[d]_{a^{-1}}
      & \uV(K,\uS(X,Y))\otimes(K\otimes X)
      \ar[d]^{a^{-1}}
      &
      \\
      (L\otimes K)\otimes X
      \ar@{.>}[r]^(0.37){(f\otimes 1)\otimes 1}
       & (\uV(K,\uS(X,Y))\otimes K)\otimes X
      \ar[d]^{\epsilon\otimes 1}
      &
      \\
      & \uS(X,Y)\otimes X
      \ar[r]^\epsilon
      & Y}
  \end{equation}
  But $\epsilon\cdot (\epsilon\otimes 1)\cdot ((f\otimes 1)\otimes 1)$ in \eqref{eqn:cp2.7.25} is the image of $f$ in $\S((L\otimes K)\otimes X,Y)$
  when we trace it along \eqref{eqn:cp2.7.4}.
  So the claim follows.
\end{proof}

\begin{proof}[Proof of Theorem \ref{thm:cp2.6.9}.(1)]
  The object $K\otimes X$ in \eqref{eqn:cp2.7.1} is given by the functor in \eqref{eqn:cp2.6.0}.
  If we define $\underline\phi_{K,X,Y}$ in \eqref{eqn:cp2.5.1} as the morphism in Definition \ref{def:cp2.7.10},
  by Lemma \ref{lem:cp2.7.13},
  Lemma \ref{lem:cp2.7.15} and Lemma \ref{lem:cp2.7.16}, $\underline\phi_{K,X,Y}$ is $\V$-natural in $Y$ and $\S$ is tensored.
\end{proof}

Before we prove Theorem \ref{thm:cp2.6.9}.(2), we prove two consequences of Theorem \ref{thm:cp2.6.9}.(1).

\begin{proposition}
\label{pro:cp2.7.1}
  Let $\V$ be a closed monoidal category. If $\V$ is equipped with a $\V$-structure in Proposition \ref{pro:cp2.3.5}, then $\V$ has cylinder:
  for every $K,L\in\ob\V$, the $K$-cylinder of $L$ is
  \begin{equation}
  \label{eqn:cp2.7.23}
    (K\otimes L, \pi_{K,L,K\otimes L}(1_{K\otimes L}), \{\underline\pi_{K,L,M}\}_{M\in\ob\V})
  \end{equation}
  where $\underline\pi_{K,L,M}$ is \eqref{eqn:cp2.2.3}.
\end{proposition}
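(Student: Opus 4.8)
The plan is to avoid checking the cylinder axioms of Definition~\ref{def:cp2.4.1} by hand and instead to deduce everything from the equivalence already established in this section. First I would observe, as noted in the remark that $\V$ is closed if and only if $\V$ is tensor-closed as a $\V$-module, that a closed monoidal category $\V$ is a tensor-closed $\V$-module over itself: the action $-\otimes-\colon\V\times\V\to\V$ is the monoidal product, the internal hom $\uS(-,-)$ is $\uV(-,-)$, and the adjunction $\phi_{K,X,Y}$ is $\pi_{K,X,Y}$. The key point is that the $\V$-structure attached to this module by Proposition~\ref{pro:cp2.7.12} coincides with the $\V$-structure of Proposition~\ref{pro:cp2.3.5}: specializing $\S=\V$ in~\eqref{eqn:cp2.7.11} recovers exactly~\eqref{eqn:cp2.2.4}, so $\ub$ becomes~\eqref{eqn:cp2.2.16}, and specializing~\eqref{eqn:cp2.7.12} recovers exactly~\eqref{eqn:cp2.2.17}, so $\varphi$ becomes $\varpi$.

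With this identification, Theorem~\ref{thm:cp2.6.9}.(1) shows that the $\V$-category $\V$ is tensored, and Theorem~\ref{thm:cp2.7.5}.(2) then exhibits a cylinder whose $K$-cylinder of $L$ is
\[
  \bigl(K\otimes L,\ \varpi^{-1}(\underline\phi_{K,L,K\otimes L}\cdot j_{K\otimes L}),\ \{\underline\phi_{K,L,M}\}_{M\in\ob\V}\bigr),
\]
where $\underline\phi$ is the family constructed in Definition~\ref{def:cp2.7.10} (this is the tensored-structure isomorphism produced in the proof of Theorem~\ref{thm:cp2.6.9}.(1)). It therefore remains only to identify these data with $\underline\pi_{K,L,M}$ and with $\pi_{K,L,K\otimes L}(1_{K\otimes L})$.

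The identification $\underline\phi_{K,L,M}=\underline\pi_{K,L,M}$ is immediate from the remark recorded in Definition~\ref{def:cp2.7.10}: when $\S=\V$ the defining diagram~\eqref{eqn:cp2.7.4} reduces to~\eqref{eqn:cp2.2.1}, so the two morphisms solve the same universal problem. For the unit component I would invoke Lemma~\ref{lem:cp2.2.10} with $(X,Y,Z)=(K,L,K\otimes L)$, evaluated at $1_{K\otimes L}$. Along the top edge, $\varpi_{K\otimes L,K\otimes L}(1_{K\otimes L})=\varphi(1_{K\otimes L})=j_{K\otimes L}$ by~\eqref{eqn:cp2.4.8}, and postcomposing with $\underline\pi_{K,L,K\otimes L}$ gives $\underline\pi_{K,L,K\otimes L}\cdot j_{K\otimes L}$; along the bottom edge one obtains $\varpi_{K,\uV(L,K\otimes L)}(\pi_{K,L,K\otimes L}(1_{K\otimes L}))$. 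Commutativity of Lemma~\ref{lem:cp2.2.10} equates these, and applying $\varpi^{-1}$ yields
\[
  \varpi^{-1}(\underline\pi_{K,L,K\otimes L}\cdot j_{K\otimes L})=\pi_{K,L,K\otimes L}(1_{K\otimes L}),
\]
which is precisely the second entry claimed in~\eqref{eqn:cp2.7.23}.

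The main obstacle here is conceptual rather than computational: correctly recognizing $\V$ as a tensor-closed module over itself and verifying that the $\V$-structure induced by Proposition~\ref{pro:cp2.7.12} is the canonical one of Proposition~\ref{pro:cp2.3.5}, so that Theorems~\ref{thm:cp2.6.9} and~\ref{thm:cp2.7.5} apply verbatim. Once that is in place, Lemma~\ref{lem:cp2.2.10} does all the remaining work, and no direct verification of the coherence diagram~\eqref{eqn:cp.1.1} is needed. A fully direct check of~\eqref{eqn:cp.1.1} from the definitions of $\ub$ for $\uV$, of $\underline\pi$, and of the counit $\epsilon$ is possible, but it is considerably longer and I would avoid it.
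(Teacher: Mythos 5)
Your proof is correct and follows essentially the same route as the paper's: the paper likewise obtains the cylinder from Theorem \ref{thm:cp2.6.9}.(1) together with Theorem \ref{thm:cp2.7.5}.(2), identifies $\underline\phi_{K,L,M}$ with $\underline\pi_{K,L,M}$ by noting that \eqref{eqn:cp2.7.4} reduces to \eqref{eqn:cp2.2.1} when $\S=\V$, and derives the unit component $\varpi^{-1}(\underline\pi_{K,L,K\otimes L}\cdot j_{K\otimes L})=\pi_{K,L,K\otimes L}(1_{K\otimes L})$ from Lemma \ref{lem:cp2.2.10} and \eqref{eqn:cp2.4.8}. Your only addition is spelling out that the $\V$-structure of Proposition \ref{pro:cp2.7.12} specializes to that of Proposition \ref{pro:cp2.3.5}, a point the paper leaves implicit.
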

\begin{proof}
  By Lemma \ref{lem:cp2.2.10} and \eqref{eqn:cp2.4.8}, $\underline\pi_{K,L,K\otimes L}\cdot j_{K\otimes L}=
  \varpi(\pi_{K,L,K\otimes L}(1_{K\otimes L}))$
  holds. If $\S$ is $\V$, the diagram \eqref{eqn:cp2.7.4} defining $\underline\phi_{K,X,Y}$ is reduced to \eqref{eqn:cp2.2.1} defining $\underline\pi_{K,L,M}$. Therefore \eqref{eqn:cp2.1.16} in Theorem \ref{thm:cp2.7.5} reduces to \eqref{eqn:cp2.7.23}.
\end{proof}

\begin{lemma}
  Let $\V$ be a closed monoidal category equipped with a $\V$-structure in Proposition \ref{pro:cp2.3.5}.
  Then for every $K,L,M\in\ob\V$,
  \begin{equation}
  \label{eqn:cp2.7.5}
    \pi_{-,K\otimes L,-}^{-1}(\underline\pi_{K,L,M}^{-1})=\epsilon^L_M\cdot(\epsilon^K_M\otimes 1)\cdot a
  \end{equation}
  holds where $\underline\pi_{K,L,M}$ is \eqref{eqn:cp2.2.3}.
\end{lemma}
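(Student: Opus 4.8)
The plan is to apply the adjunction isomorphism $\pi_{P,K\otimes L,M}$ (where I abbreviate $P=\uV(K,\uV(L,M))$) to both sides and to read off the result from the diagram \eqref{eqn:cp2.2.1} that characterizes $\underline\pi$. Since $\pi_{P,K\otimes L,M}$ is a bijection, proving \eqref{eqn:cp2.7.5} is equivalent to proving $\pi_{P,K\otimes L,M}(\text{RHS})=\underline\pi_{K,L,M}^{-1}$, where $\text{RHS}$ denotes the composite $\epsilon^L_M\cdot(\epsilon^K_{\uV(L,M)}\otimes 1)\cdot a^{-1}:P\otimes(K\otimes L)\to M$. Here the counit occurring in the middle is the one at the object $\uV(L,M)$ for the adjunction $-\otimes K\dashv\uV(K,-)$, and the associativity isomorphism is taken in the direction $P\otimes(K\otimes L)\to(P\otimes K)\otimes L$; this is what the compressed notation in \eqref{eqn:cp2.7.5} abbreviates. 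Because $\underline\pi_{K,L,M}$ is invertible, the claim reduces further to the single equation $\underline\pi_{K,L,M}\cdot\pi_{P,K\otimes L,M}(\text{RHS})=1_P$.

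Next I would instantiate the characterizing diagram \eqref{eqn:cp2.2.1} at $X=K$, $Y=L$, $Z=M$ and at the specific object $W=P$. For an arbitrary $\psi\in\V(P\otimes(K\otimes L),M)$ it gives $\underline\pi_{K,L,M}\cdot\pi_{P,K\otimes L,M}(\psi)=\pi_{P,K,\uV(L,M)}\big(\pi_{P\otimes K,L,M}(\psi\cdot a)\big)$, where $a^*$ has been written out as precomposition with the associativity isomorphism $a$. Substituting $\psi=\text{RHS}$ makes the innermost $a^{-1}$ cancel against $a$, leaving $\pi_{P\otimes K,L,M}\big(\epsilon^L_M\cdot(\epsilon^K_{\uV(L,M)}\otimes 1)\big)$ to be computed first.

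The computation is then just two applications of the triangle identity for $\pi$, namely $\pi_{-,Y,Z}(\epsilon^Y_Z\cdot(g\otimes 1))=g$ (equivalently $\pi^{-1}(g)=\epsilon\cdot(g\otimes 1)$). Applying it to the $L$-adjunction collapses $\epsilon^L_M\cdot(\epsilon^K_{\uV(L,M)}\otimes 1)$ to $\epsilon^K_{\uV(L,M)}$, and applying it to the $K$-adjunction collapses $\epsilon^K_{\uV(L,M)}=\pi^{-1}(1_P)$ to $1_P$. This yields $\underline\pi_{K,L,M}\cdot\pi_{P,K\otimes L,M}(\text{RHS})=1_P$, which is exactly the reduced equation from the first paragraph.

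I do not expect a genuine obstacle here: the statement is essentially an unwinding of the definition of $\underline\pi$. The only points requiring care are bookkeeping ones — getting the direction of the associativity isomorphism right so that it cancels under $a^*$, and identifying the correct counits (in particular that the middle counit lands in $\uV(L,M)$, not in $M$, so that the two triangle identities are applied to the $L$- and $K$-adjunctions in the correct order). Once these are pinned down, the Yoneda-style reduction to the instance $W=P$ together with the two triangle identities finishes the proof with no further calculation.
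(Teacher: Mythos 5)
Your proof is correct, but it takes a genuinely different route from the paper's. The paper first invokes Lemma \ref{lem:cp2.7.13} (specialized to $\S=\V$) to write $\underline\pi_{K,L,M}^{-1}$ explicitly as the composite $\uV(1,\epsilon^L_M)\cdot(-\otimes L)_{K,\uV(L,M)}$, and then computes $\pi^{-1}$ of that composite by a short diagram chase using the naturality of the counit and the defining formula \eqref{eqn:cp2.7.16} for $(-\otimes L)_{K,\uV(L,M)}$ (that formula is precisely what makes the triangle in the paper's diagram commute). You instead bypass the enriched machinery entirely: you apply $\pi$ to the right-hand side, instantiate the characterizing diagram \eqref{eqn:cp2.2.1} at the single object $W=\uV(K,\uV(L,M))$, and finish with two applications of the counit identity $\pi^{-1}(g)=\epsilon\cdot(g\otimes 1)$, reducing everything to the equation $\underline\pi_{K,L,M}\cdot\pi(\mathrm{RHS})=1$, which suffices since $\underline\pi_{K,L,M}$ is invertible. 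Your argument is more elementary and self-contained --- it needs only the hom-set adjunction and the definition of $\underline\pi$, not Lemma \ref{lem:cp2.7.13} or the morphism \eqref{eqn:cp2.7.15} --- whereas the paper buys brevity by recycling a lemma it has already proved for arbitrary tensor-closed $\V$-modules, exhibiting the present statement as the $\S=\V$ instance of that formalism. Your bookkeeping points are also handled correctly and match the paper's stated convention of suppressing coherence data: the middle counit must indeed be read as $\epsilon^K_{\uV(L,M)}$ rather than literally as $\epsilon^K_M$, and the associativity isomorphism must be oriented so that it cancels under $a^*$ in \eqref{eqn:cp2.2.1}.
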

\begin{proof}
  We know from Lemma \ref{lem:cp2.7.13} that
  $\pi_{-,K\otimes L,-}^{-1}(\underline\pi_{K,L,M}^{-1})$ is the upper right corner of the following diagram.
  \begin{equation*}
    \resizebox{1.1\textwidth}{!}{
    \xymatrix@C=6em{
    \uV(K,\uV(L,M))\otimes K\otimes L \ar[r]^(0.45){(-\otimes L)_{K,\uV(L,M)}}
    \ar[dr]_{\epsilon^K_M}
    &
    \uV(K\otimes L,\uV(L,M)\otimes L)\otimes K\otimes L \ar[r]^(0.56){\uV(1,\epsilon_M^L)}
    \ar[d]^{\epsilon^{K\otimes L}_{\uV(L,M)\otimes L}}
    &
    \uV(K\otimes L,M)\otimes K\otimes L
    \ar[d]^{\epsilon^{K\otimes L}_M}\\
    &
    \uV(L,M)\otimes L
    \ar[r]_{\epsilon^L_M}
    &
    M}}
  \end{equation*}
  Because of \eqref{eqn:cp2.7.16}, the triangle commutes.
  Hence \eqref{eqn:cp2.7.5} holds.
\end{proof}

\begin{proof}[Proof of Theorem \ref{thm:cp2.6.9}.(2)]
(A)
  First, we prove that if $\S$ is a category with $\V$-structure and cylinder, then $\S$ is a tensor-closed $\V$-module.

  The functor $-\otimes -$ in \eqref{eqn:cp2.6.0} is provided by Proposition \ref{pro:esht.1.2}.(1).
  The functor $\uS(-,-)$ in \eqref{eqn:cp2.6.6} is given by the functor in \eqref{eqn:cp2.4.17}.

  The natural isomorphism $\phi_{K,X,Y}$ in \eqref{eqn:cp2.6.7} is defined by the following commutative diagram
  \begin{equation}
  \label{eqn:cp2.7.32}
    \xymatrix{
      \S(K\otimes X,Y)
      \ar[r]^(.45)\varphi_(.45)\cong
      \ar[d]_{\phi_{K,X,Y}}
      &
      \V(I,\uS(K\otimes X,Y))
      \ar[d]^{\V(I,\underline\phi_{K,X,Y})}
      \\
      \V(K,\uS(X,Y))
      \ar[r]_(.45)\varpi^(.45)\cong
      &
      \V(I,\uV(K,\uS(X,Y)))}
  \end{equation}
  where $\underline\phi_{K,X,Y}$ is \eqref{eqn:cp2.4.1}.
  Then by Proposition \ref{pro:esht.1.2}.(2), $\phi_{K,X,Y}$ is natural in $K,X,Y$.
  We observe that for every $f\in\S(K\otimes X,Y)$, $\phi_{K,X,Y}(f)$ is the composition
  \begin{equation}
  \label{eqn:cp2.7.3}
    K\xrightarrow{l^{-1}} I\otimes K\xrightarrow{\varphi(f)\otimes\alpha_{K,X}}
    \uS(K\otimes X, Y)\otimes\uS(X,K\otimes X) \xrightarrow{\ub} \uS(X,Y)
  \end{equation}
  by \eqref {eqn:cp.1.1} and \eqref{eqn:cp2.2.17}.
  In particular,
  \begin{equation}
  \label{eqn:cp2.7.8}
    \phi_{K,X,K\otimes X}(1_{K\otimes X})=\alpha_{K,X}.
  \end{equation}

  Next, we define the natural transformation $a_{K,L,X}$ in \eqref{eqn:cp2.6.1}
  by the commutativity of the diagram \eqref{eqn:cp2.1.28} and Yoneda Lemma.
  \begin{equation}
  \label{eqn:cp2.1.28}
    \xymatrix{
      \S((K\otimes L)\otimes X,Y)\ar[r]^{\phi} &
      \V(K\otimes L,\uS(X,Y))\ar[r]^{\pi} &
      \V(K,\uV(L,\uS(X,Y)))\\
      \S(K\otimes (L\otimes X),Y)\ar[u]^{a_{K,L,X}^*} \ar[rr]_{\phi} && \V(K,\uS(L\otimes X,Y))\ar[u]_{\V(K,\underline{\phi})}}
  \end{equation}
  where $K,L\in\ob\V$, $X,Y\in\ob \S$.
  Using \eqref{eqn:cp2.7.3}, we know
  \begin{equation*}
    \phi_{K\otimes L, X, K\otimes(L\otimes X)}(a_{K,L,X})=\ub\cdot (\varphi(a_{K,L,X})\otimes\alpha_{K\otimes L,X})\cdot l^{-1}_{K\otimes L}.
  \end{equation*}
  On the other hand, by \eqref{eqn:cp2.7.8},
  \begin{align*}
  \label{eqn:cp2.7.17}
    &(\pi^{-1}_{K,L,\uS(X,K\otimes (L\otimes X))}\cdot\V(K,\underline\phi_{L,X,K\otimes (L\otimes X)})\cdot\phi_{K,L\otimes X, K\otimes(L\otimes X)})(1_{K\otimes (L\otimes X)})\\
    =&\pi^{-1}_{K,L,\uS(X,K\otimes (L\otimes X))}(\underline\phi_{L,X,K\otimes (L\otimes X)}\cdot\alpha_{K,L\otimes X})\\
    =&\ub\cdot (\alpha_{K,L\otimes X}\otimes\alpha_{L,X}).
  \end{align*}
  So, by the commutativity of \eqref{eqn:cp2.1.28},
  \begin{equation}
  \label{eqn:cp2.7.7}
    \ub\cdot (\alpha_{K,L\otimes X}\otimes\alpha_{L,X})=\ub\cdot (\varphi(a_{K,L,X})\otimes\alpha_{K\otimes L,X})\cdot l^{-1}_{K\otimes L}.
  \end{equation}
  Consider the following diagram.
  \begin{equation}
  \label{eqn:cp2.7.21}
    \xymatrix@C=2em{
      \uS((K\otimes L)\otimes X,Y)\ar[r]^{\underline\phi} &
      \uV(K\otimes L,\uS(X,Y))\ar[r]^{\underline\pi} &
      \uV(K,\uV(L,\uS(X,Y)))\\
      \uS(K\otimes (L\otimes X),Y)\ar[u]^{\uS(a_{K,L,X},Y)} \ar[rr]_{\underline\phi} && \uV(K,\uS(L\otimes X,Y))\ar[u]_{\V(K,\underline{\phi})}}
  \end{equation}
  Using \eqref{eqn:cp.1.1} and \eqref{eqn:cp2.4.24},
  we know $\pi^{-1}_{-,K\otimes L,-}(\underline\phi\cdot\uS(a_{K,L,X},Y))$ can be represented by the straight arrows in the following commutative diagram: for simplicity, we let $M=\uS(K\otimes (L\otimes X),Y)$.
  \begin{equation*}
    \resizebox{1.15\textwidth}{!}{
    \xymatrix@C=4em{
      M\otimes (K\otimes L) \ar[d]_{l^{-1}}&\\
      M\otimes I\otimes (K\otimes L)
      \ar[d]_{\varphi(a_{K,L,X})}&\\
      M\otimes \uS((K\otimes L)\otimes X, K\otimes(L\otimes X))\otimes (K\otimes L)
      \ar@{..>}[d]_\ub\ar[r]^(0.45){\alpha_{K\otimes L ,X}}
      &M\otimes\uS((K\otimes L)\otimes X, K\otimes(L\otimes X))\otimes\uS(X,(K\otimes L)\otimes X)
      \ar[d]^\ub\\
      \uS((K\otimes L)\otimes X,Y)
      \ar@{..>}[r]^{\alpha_{K\otimes L ,X}}\otimes (K\otimes L)\ar[d]_{\underline\phi}
      &\uS((K\otimes L)\otimes X,Y)\otimes \uS(X,(K\otimes L)\otimes X)
      \ar[d]^\ub
      \\
     \uV(K\otimes L,\uS(X,Y))\otimes(K\otimes L)
      \ar@{..>}[r]_\epsilon
      &\uS(X,Y)}}
  \end{equation*}
  On the other hand, using \eqref{eqn:cp2.7.5},
  $\pi^{-1}_{-,K\otimes L,-}(\underline\pi^{-1}\cdot\uV(K,\underline\phi)\cdot\underline\phi)$ can be represented by the straight arrows in the following commutative diagram.
  \begin{equation*}
    \resizebox{0.95\textwidth}{!}{
    \xymatrix@C=3em{
      \uS(K\otimes (L\otimes X),Y)\otimes K\otimes L
      \ar@{..>}[d]_{\underline\phi}\ar[r]^(0.38){\alpha_{K,L\otimes X}}
      &
      \uS(K\otimes (L\otimes X),Y)\otimes\uS(L\otimes X,K\otimes (L\otimes X))\otimes L
      \ar[d]^\ub\\
      \uV(K,\uS(L\otimes X,Y))\otimes K\otimes L
      \ar@{..>}[d]_{\uV(K,\underline\phi)}\ar@{..>}[r]^{\epsilon^K}
      &
      \uS(L\otimes X,Y)\otimes L
      \ar[d]^{\underline\phi}
      \\
      \uV(K,\uV(L,\uS(X,Y)))\otimes K\otimes L
      \ar@{..>}[d]_{\underline\pi^{-1}}\ar@{..>}[r]^{\epsilon^K}
      &\uV(L,\uS(X,Y))\otimes L
      \ar[d]^{\epsilon^L}\\
      \uV(K\otimes L,\uS(X,Y))\otimes K\otimes L
      \ar@{..>}[r]^{\epsilon^{K\otimes L}}
      &\uS(X,Y)
      }}
  \end{equation*}
  Since $\epsilon^L\cdot\underline\phi=\ub\cdot\alpha_{L,X}$ by \eqref{eqn:cp.1.1},
  the diagram \eqref{eqn:cp2.7.21} commutes by \eqref{eqn:cp2.7.7}.
  Now, using the commutativity of \eqref{eqn:cp2.1.28} and \eqref{eqn:cp2.7.21}, one can reduce the commutativity of the diagram $\S(\eqref{eqn:cp2.6.4},Y)$ to the commutativity of the following diagram.
  \begin{equation*}
    \resizebox{\textwidth}{!}{
    \xymatrix{
      \V((K\otimes L)\otimes M,\uS(X,Y))\ar[r]^\pi & \V(K\otimes L,\uV(M, \uS(X,Y)))\ar[r]^\pi &
      \V(K,\uV(L,\uV(M,\uS(X,Y))))\\
      \V(K\otimes (L\otimes M),\uS(X,Y))\ar[u]^{a^*_{K,L,M}} \ar[rr]_\pi && \V(K,\uV(L\otimes M,\uS(X,Y)))\ar[u]_{\underline{\pi}}}}
  \end{equation*}
  But this is a consequence of \eqref{eqn:cp2.2.1}.

  Finally, we define the natural transformation $l$ in \eqref{eqn:cp2.6.2}.
  Given any $X\in\ob\S$, $l_X$ is defined as the unique morphism of $\V$
  that makes the the diagram \eqref{eqn:cp2.7.31} commutes for every $Y\in\ob\S$.
  \begin{equation}
  \label{eqn:cp2.7.31}
    \xymatrix{
      \S(X,Y)\ar[rr]^{l^*_X} \ar[dr]_{\varphi_{X,Y}}
      && \S(I\otimes X,Y)\ar[dl]^{\phi_{I,X,Y}}\\
      &\V(I,\uS(X,Y))}
  \end{equation}
  Because of \eqref{eqn:cp2.1.28}, the commutativity of \eqref{eqn:cp2.6.5} is equivalent to the commutativity of the following diagram of straight arrows.
  \begin{equation*}
    \xymatrix@C=4em{
      \V(K\otimes I,\uS(X,Y)) \ar[r]^\pi &
      \V(K,\uV(I,\uS(X,Y))) & \V(K,\uS((I \otimes X),Y)) \ar[l]_{\underline\phi_{I,X,Y}}\\
      & \V(K,\uS(X,Y)) \ar@{..>}[u]_{i_{\uS(X,Y)}} \ar[ul]^{r^*_K} \ar[ur]_{\uS(l_X,Y)}&}
  \end{equation*}
  The left triangle commutes because of \eqref{eqn:cp2.2.18}.
  Because of \eqref{eqn:cp2.7.8} and the commutativity of \eqref{eqn:cp2.7.31}
  \begin{equation*}
    \varphi_{X,Y}(l_X^{-1})=\alpha_{I,X}
  \end{equation*}
  holds.
  Then using \eqref{eqn:cp2.4.2}, \eqref{eqn:cp2.4.22}, \eqref{eqn:cp.1.1}, \eqref{eqn:cp2.4.9}, one can easily show that
  \begin{equation*}
    \pi^{-1}(\underline\phi_{I,X,Y}\cdot\uS(l_X,Y))=r_{\uS(X,Y)}.
  \end{equation*}
  So, by \eqref{eqn:cp2.2.20}, the right triangle also commutes.

(B) Next, we prove that the correspondence in (1), say $\Phi$, is one-to-one
  by showing
  that the correspondence in (A), say $\Psi$, is the inverse of $\Phi$.

(i) $\Psi\circ\Phi=1$:
  We can decompose \eqref{eqn:cp2.7.4} with $L=I$ into the following diagram.
  \begin{equation*}
    \xymatrix{
      \S((I\otimes K)\otimes X,Y)\ar[r]^{\phi} &
      \V(I\otimes K,\uS(X,Y))\ar[r]^{\pi} &
      \V(I,\uV(K,\uS(X,Y)))\\
      &\uV(K,\uS(X,Y))\ar[ur]^\varpi\ar[u]^{l^*}&\\
      \S(I\otimes (K\otimes X),Y)\ar[uu]^{a^*_{I,K,X}} &
      \uS(K\otimes X,Y)\ar[r]_(0.45){\varphi}\ar[u]_{\phi_{K,X,Y}}
      \ar[l]^{l^*}\ar[luu]_{l^*}
      & \V(I,\uS(K\otimes X,Y))\ar[uu]_{\V(I,\underline{\phi}_{K,X,Y})}}
  \end{equation*}
  Because of the commutativity of \eqref{eqn:cp2.7.6}, the triangle at the lower left corner is a commutative diagram. Since every morphism is an isomorphism, the trapezoid at the lower right corner is also a commutative diagram. So, $(\Psi\circ\Phi)(\phi)=\phi$ by \eqref{eqn:cp2.7.32}. Then by \eqref{eqn:cp2.7.4} and \eqref{eqn:cp2.1.28}, $(\Psi\circ\Phi)(a)=a$.
  $(\Psi\circ\Phi)(l)=l$ holds by \eqref{eqn:cp2.7.12} and \eqref{eqn:cp2.7.31}.

(ii) $\Phi\circ\Psi=1$:
  First, we know by the naturality of $\phi_{\uS(Y,Z),-,Z}$
  \begin{equation}
  \label{eqn:cp2.7.20}
    \phi_{\uS(Y,Z),\uS(X,Y)\otimes X,Z}(\epsilon_Z\cdot (1\otimes\epsilon_Y))=\uS(\epsilon_Y,Z).
  \end{equation}
  Consider the following commutative diagram.
  \begin{equation*}
    \resizebox{1.1\textwidth}{!}{
    \xymatrix@C=4em{
      \uS(Y,Z)\otimes\uS(X,Y)
      \ar[r]^(0.44){\alpha_{\uS(X,Y),X}}\ar[d]_{\uS(\epsilon_Y,Z)}
      &
      \uS(Y,Z)\otimes\uS(X,\uS(X,Y)\otimes X)
      \ar[r]^{\uS(X,\epsilon_Y)}\ar[d]_{\uS(\epsilon_Y,Z)}
      &
      \uS(Y,Z)\otimes\uS(X,Y)
      \ar[dd]^\ub
      \\
      \uS(\uS(X,Y)\otimes X,Z)\otimes\uS(X,Y)
      \ar[r]^(0.44){\alpha_{\uS(X,Y),X}}\ar[d]_{\underline\phi}
      &
      \uS(\uS(X,Y)\otimes X,Z)\otimes\uS(X,\uS(X,Y)\otimes X)
      \ar[d]_\ub
      &
      \\
      \uV(\uS(X,Y),\uS(X,Z))\otimes\uS(X,Y)
      \ar[r]_\epsilon
      &
      \uS(X,Z)
      \ar[r]_=
      &
      \uS(X,Z)
      }}
  \end{equation*}
  Using \eqref{eqn:cp2.7.4} and \eqref{eqn:cp2.7.20}, we know  $\phi_{\uS(Y,Z)\otimes\uS(X,Y),X,Z}(\epsilon_Z\cdot (1\otimes\epsilon_Y)\cdot a)$ is the right upper corner of the above diagram.
  Since $\alpha_{\uS(X,Y),X}=\phi_{\uS(X,Y),X,\uS(X,Y)\otimes X}(1_{\uS(X,Y)\otimes X})$ holds by \eqref{eqn:cp2.7.8},
  the left triangle of the following diagram commutes.
  \begin{equation*}
    \xymatrix@C=5em{
      \uS(X,Y)\otimes X
      \ar[r]^(0.41){\alpha_{\uS(X,Y),X}\otimes 1}\ar[dr]_=
      &
      \uS(X,\uS(X,Y)\otimes X)\otimes X
      \ar[d]^{\epsilon_{\uS(X,Y)\otimes Y}}
      \ar[r]^(0.58){\uS(X,\epsilon_Y)\otimes 1}
      &
      \uS(X,Y)\otimes X
      \ar[d]^{\epsilon_Y}
      \\
      &
      \uS(X,Y)\otimes X
      \ar[r]_{\epsilon_Y}
      &
      Y
      }
  \end{equation*}
  Therefore $\uS(X,\epsilon_Y)\cdot\alpha_{\uS(X,Y),X}=1$ and $(\Phi\circ\Psi)(\ub)=\ub$ holds by  \eqref{eqn:cp2.7.11}.
  $(\Phi\circ\Psi)(\varphi)=\varphi$ holds by \eqref{eqn:cp2.7.12} and \eqref{eqn:cp2.7.31}.
  Theorem \ref{thm:cp2.7.5} implies that $\alpha_{K,X}$ is determined by $\underline\phi_{K,X,Y}$.
  But
  $(\Phi\circ\Psi)(\underline\phi)=\underline\phi$
  holds by \eqref{eqn:cp2.7.4} and \eqref{eqn:cp2.1.28}.
\end{proof}

\section{Closed $\V$-module}

In this final section, we prove Theorem \ref{thm:cp2.1.11}.
In doing so, we are going to use the correspondence in Theorem \ref{thm:cp2.6.9} and its dual without mentioning them explicitly.

First we prove three lemmas needed for the uniqueness in Theorem \ref{thm:cp2.1.11}.

\begin{lemma}
\label{lem:cp2.8.1}
  Let $\V$ be a closed symmetric monoidal category.
  Let $\S$ be a closed $\V$-bimodule.
  Then for every $K,L\in\ob\V$ and $X,Y\in\ob\S$, the following diagram commutes.
  \begin{equation*}
    \xymatrix@C=3.5em{
      \uV(L\otimes K,\uS(X,Y))
      \ar[rr]^c
      \ar[d]_{\underline\pi}
      &&
      \uV(K\otimes L,\uS(X,Y))
      \ar[d]^{\underline\pi}
      \\
      \uV(L,\uV(K,\uS(X,Y)))
      &&
      \uV(K,\uV(L,\uS(X,Y)))
      \\
      \uV(L,\uS(K\otimes X,Y))
      \ar[u]^{\underline\phi_{K,X,Y}}
      &
      \uS(K\otimes X,L\pitchfork Y)
      \ar[r]^(0.48){\underline\phi_{K,X,L\pitchfork Y}}
      \ar[l]_(0.48){\underline\psi_{L,Y,K\otimes X}}
      &
      \uV(K,\uS(X,L\pitchfork Y))
      \ar[u]_{\underline\psi_{L,Y,X}}}
  \end{equation*}
\end{lemma}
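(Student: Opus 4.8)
The plan is to prove the identity by Yoneda after translating every arrow into the language of ordinary hom-sets. Every object in the diagram is an internal hom of $\V$ and every arrow is a morphism of $\V$, so it is enough to show that $\V(W,-)$ sends the diagram to a commutative diagram of sets for each $W\in\ob\V$. First I would rewrite each constituent map on $\V(W,-)$: the arrow $\underline{\pi}$ is governed by its defining diagram \eqref{eqn:cp2.2.1}, so $\V(W,\underline{\pi})$ is, modulo the adjunction $\pi$, reassociation by $a$; the arrow $\underline\phi_{K,X,Y}$ is governed by \eqref{eqn:cp2.7.4}, expressing $\V(W,\underline\phi)$ through $\phi$, $\pi$ and $a$; and applying Definition \ref{def:cp2.7.10}, equivalently \eqref{eqn:cp2.7.4}, to the tensor-closed $\V$-module $\S^{\op}$ (whose tensor is $\pitchfork$ and whose internal hom is $\uS$ with reversed arguments) gives the parallel description of $\underline\psi_{L,Y,X}$ in terms of the cotensor adjunction $\psi$ of \eqref{eqn:cp2.1.15}.

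Next I would transport both composites to one common hom-set. Using the two-variable adjunction of the closed $\V$-bimodule, namely the natural isomorphisms $\S(K\otimes X,Y)\cong\V(K,\uS(X,Y))\cong\S(X,K\pitchfork Y)$ coming from $\phi$ and $\psi$, the right-hand route $F\to G\to D$ carries an element of $\V(W,\uS(K\otimes X,L\pitchfork Y))$ to an element of $\S\big(((W\otimes K)\otimes L)\otimes X,\,Y\big)$, while the left-hand route $F\to E\to C$ lands it in $\S\big(((W\otimes L)\otimes K)\otimes X,\,Y\big)$. The two representatives should then differ by nothing more than the reindexing that interchanges the inner factors $K$ and $L$. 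On the $\V$-side this interchange is exactly what the top edge $\underline{\pi}\circ\uV(c_{K,L},-)\circ\underline{\pi}^{-1}$ implements, so the proof reduces to checking that this top transposition matches the one obtained by reordering $K$ and $L$ inside the iterated tensor on the source object. I would verify this by the coherence of $\V$: the pentagon \eqref{eqn:elno.1.1} controls the scattered associativity constraints and the hexagon \eqref{eqn:elno.4.12} identifies the single symmetry $c_{K,L}$ with the transposition of the inner factors, while naturality of $\phi$, $\psi$ and $\pi$ lets these constraints slide past the adjunctions. Once the two elements of the common hom-set coincide for all $W$, Yoneda yields the equality of the two morphisms.

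The main obstacle is precisely this coherence bookkeeping: there is only one symmetry $c_{K,L}$ at the top, but each of the two routes introduces its own associativity reindexings through \eqref{eqn:cp2.2.1} and \eqref{eqn:cp2.7.4}, and one must check that these assemble, via the pentagon and hexagon, into the single transposition relating the two hom-set representatives. A secondary but error-prone point is the passage through the opposite $\V$-structure needed to describe $\underline\psi$: there the roles of tensor and internal hom are twisted ($\pitchfork$ plays the role of the tensor and $\uS$ has its arguments reversed), so keeping the variances consistent while applying \eqref{eqn:cp2.7.4} to $\S^{\op}$ is where most of the care is required.
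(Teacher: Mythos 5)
Your Yoneda set-up is fine as far as it goes: representing by $\V(W,-)$, unpacking $\underline\pi$ through \eqref{eqn:cp2.2.1}, $\underline\phi$ through \eqref{eqn:cp2.7.4}, and $\underline\psi$ through the dual of \eqref{eqn:cp2.7.4} applied to $\S^{\op}$ is all legitimate, and your identification of where the two routes land, $\S\bigl(((W\otimes K)\otimes L)\otimes X,Y\bigr)$ versus $\S\bigl(((W\otimes L)\otimes K)\otimes X,Y\bigr)$, is correct. The gap is in the final step, where you claim the comparison follows from the pentagon \eqref{eqn:elno.1.1}, the hexagon \eqref{eqn:elno.4.12}, and naturality of $\phi$, $\psi$, $\pi$. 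Nothing in that list links the tensor-side data $(\otimes,a,\phi)$ to the cotensor-side data $(\pitchfork,a^{\op},\psi)$; yet the lemma is precisely such a link. Indeed it is what later pins $a^{\op}$ down uniquely in Theorem \ref{thm:cp2.1.11}: if the lemma were a formal consequence of $\V$-coherence plus naturality, it would hold for an arbitrary comodule associativity $a^{\op}$ satisfying only \eqref{eqn:cp2.1.42} and \eqref{eqn:cp2.1.43} (note $\underline\psi$ depends on $a^{\op}$ through the dual of \eqref{eqn:cp2.7.4}), and the uniqueness assertion would be vacuous. The missing ingredient is the defining axiom of a closed $\V$-bimodule, namely that the $\V$-structure of $\S^{\op}$ is the \emph{opposite} $\V$-structure of $\S$: $\ub^{\op}=\ub\cdot c$ and $\varphi^{\op}_{X,Y}=\varphi_{Y,X}$, which at the unit--counit level is the pair of compatibilities \eqref{eqn:cp.1.1} and \eqref{eqn:cp2.1.25} holding with one and the same composition $\ub$.

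To see where your reduction stalls, write $\rho=\psi^{-1}\circ\phi$ for the induced adjunction isomorphism $\S(K\otimes X,Y)\cong\S(X,K\pitchfork Y)$. After your translation, Route A applies the conjugation $g\mapsto\rho^{-1}\bigl(a^{\op}\cdot\rho(g)\bigr)$ at parameter $W$ with $K\otimes X$ in the $\S$-slot, while Route B applies it at parameter $W\otimes K$ with $X$ in the slot; the identity you must verify moves the factor $K$ between the $\V$-parameter and the $\S$-argument. That move is not naturality of $\phi$ or $\psi$ in any of their variables --- the comparison maps between $\phi_{W\otimes K,X,-}$ and $\phi_{W,K\otimes X,-}$ are exactly $\underline\phi$ itself, so ``sliding the constraints past the adjunctions'' is circular unless you inject the enrichment compatibility. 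This is exactly the point at which the paper works differently: it applies $\pi^{-1}_{-,K\otimes L,-}$ and rewrites both composites as explicit morphisms $\uS(K\otimes X,L\pitchfork Y)\otimes(K\otimes L)\rightarrow\uS(X,Y)$, using Lemma \ref{lem:cp2.7.13} and \eqref{eqn:cp2.7.16} together with \eqref{eqn:cp.1.1} and \eqref{eqn:cp2.1.25} to express everything through $\alpha_{K,X}$, $\beta_{L,Y}$, $\ub$, $c$ and $a^{-1}$; only then does the hexagon \eqref{eqn:elno.4.12} finish the proof --- the one step your sketch and the paper genuinely share. Your argument can be repaired by keeping the Yoneda translation but replacing ``coherence plus naturality'' with this unit--counit computation, i.e.\ expressing $\rho$ through $\alpha$ and $\beta$ and invoking \eqref{eqn:cp2.1.25} (equivalently $\ub^{\op}=\ub\cdot c$) to interchange the two evaluations.
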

\begin{proof}
  In the above diagram, we have two morphisms from $\uS(K\otimes X,L\pitchfork Y)$ to $\uV(K\otimes L,\uS(X,Y))$. We want to show that they coincide after applying $\pi^{-1}_{-,K\otimes L,-}$ to them.
  Consider the following diagram.
  \begin{equation*}
    \resizebox{1.1\textwidth}{!}{
    \xymatrix@C=0em@R=2em{
      \uS(K\otimes X,L\pitchfork Y)\otimes(K\otimes L)
      \ar[r]^(0.45){\alpha_{K,X}}
      \ar[d]_{\underline\phi_{K,X,L\pitchfork Y}}
      &
      (\uS(K\otimes X,L\pitchfork L)\otimes\uS(X,K\otimes X))\otimes L
      \ar[d]^\ub
      &
      \\
      \uV(K,\uS(X,L\pitchfork Y))\otimes(K\otimes L)
      \ar[r]^\epsilon
      \ar[dd]_{\underline\psi_{L,Y,X}}
      &
      \uS(X,L\pitchfork Y)\otimes L
      \ar[r]^(0.45){\beta_{L,Y}}
      \ar[dd]^{\underline\psi_{L,Y,X}}
      &
      \uS(X,L\pitchfork Y)\otimes \uS(L\pitchfork Y,Y)
      \ar[d]^c
      \\
      &&
      \uS(L\pitchfork Y,Y)\otimes \uS(X,L\pitchfork Y)
      \ar[d]^\ub
      \\
      \uV(K,\uV(L,\uS(X,Y)))\otimes(K\otimes L)
      \ar[r]^\epsilon
      \ar[d]_{-\otimes L}
      &
      \uV(L,\uS(X,Y))\otimes L
      \ar[r]^\epsilon
      &
      \uS(X,Y)
      \\
      \uV(K\otimes L,\uV(L,\uS(X,Y))\otimes L)\otimes (K\otimes L)
      \ar[rr]^\epsilon
      \ar[ur]^\epsilon
      &&
      \uV(K\otimes L,\uS(X,Y))\otimes (K\otimes L)
      \ar[u]_\epsilon}}
  \end{equation*}
  By \eqref{eqn:cp2.7.16} and the commutativity of the diagrams \eqref{eqn:cp.1.1} and \eqref{eqn:cp2.1.25}, the above diagram commutes.
  Then, using Lemma \ref{lem:cp2.7.13}, we obtain
  \begin{equation*}
    \pi^{-1}(\underline\pi^{-1} \cdot\underline\psi_{L,Y,X} \cdot\underline\phi_{K,X,L\pitchfork Y})=\ub\cdot (1\otimes\ub)\cdot c \cdot a^{-1}\cdot (1\otimes(\alpha_{K,X}\otimes\beta_{L,Y}))
  \end{equation*}
  From a similar diagram chasing, we obtain
  \begin{equation*}
    \pi^{-1}(c\cdot \underline\pi^{-1} \cdot\underline\phi_{K,X,Y} \cdot\underline\psi_{L,Y,K\otimes X})=\ub\cdot (\ub\otimes 1)\cdot c\otimes 1 \cdot a^{-1}\cdot 1\otimes c\cdot (1\otimes(\alpha_{K,X}\otimes\beta_{L,Y}))
  \end{equation*}
  Then from the commutativity of the diagram \eqref{eqn:elno.4.12},
  \begin{equation*}
    \pi^{-1}(\underline\pi^{-1} \cdot\underline\psi_{L,Y,X} \cdot\underline\phi_{K,X,L\pitchfork Y})=\pi^{-1}(c\cdot \underline\pi^{-1} \cdot\underline\phi_{K,X,Y} \cdot\underline\psi_{L,Y,K\otimes X}).
  \end{equation*}
  holds.
\end{proof}

\begin{lemma}
\label{lem:cp2.8.2}
  Let $\V$ be a closed symmetric monoidal category.
  Let $\S$ be a closed $\V$-bimodule.
  Then, for every $K,L\in\ob\V$ and $X,Y\in\ob\S$, the following diagram commutes.
  \begin{equation}
  \label{eqn:cp2.8.1}
    \xymatrix{
      \S((L\otimes K)\otimes Y,X)
      \ar[r]^c
      &
      \S((K\otimes L)\otimes Y,X)
      \ar[r]^{\psi^{-1}\cdot\phi}
      &
      \S(Y,(K\otimes L)\pitchfork X)
      \\
      \S(L\otimes (K\otimes Y),X)
      \ar[r]_{\psi^{-1}\cdot\phi}
      \ar[u]^{a^*}
      &
      \S(K\otimes Y,L\pitchfork X)
      \ar[r]_{\psi^{-1}\cdot\phi}
      &
      \S(Y,K\pitchfork (L\pitchfork X))
      \ar[u]_{a^{\op}}
      }
  \end{equation}
\end{lemma}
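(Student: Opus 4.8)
The plan is to transport the entire square \eqref{eqn:cp2.8.1} from the level of hom-sets down to a single equality of morphisms of $\V$ between internal homs, and then to recognise that equality as Lemma~\ref{lem:cp2.8.1} glued to the associativity compatibilities already recorded in the proof of Theorem~\ref{thm:cp2.6.9}. First I would observe that every arrow of \eqref{eqn:cp2.8.1} is the image under $\V(I,-)$, transported along $\varphi\colon\S(-,-)\xrightarrow{\cong}\V(I,\uS(-,-))$, of an \emph{internal representative}. For the three precomposition arrows these representatives are $\uS(a_{L,K,Y},X)$, $\uS(c_{K,L}\otimes 1,X)$ and $\uS(Y,a^{\op}_{K,L,X})$, by naturality of $\varphi$; for each arrow $\psi^{-1}\cdot\phi$ it is $\underline\psi^{-1}\cdot\underline\phi$ with matching indices, because \eqref{eqn:cp2.7.32} and its dual give $\varpi\cdot\phi=\V(I,\underline\phi)\cdot\varphi$ and $\varpi\cdot\psi=\V(I,\underline\psi)\cdot\varphi$, so that the two copies of $\varpi$ cancel and $\varphi\cdot(\psi^{-1}\cdot\phi)=\V(I,\underline\psi^{-1}\cdot\underline\phi)\cdot\varphi$. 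Since $\V(I,-)$ preserves commutativity, it then suffices to prove that the diagram of internal representatives commutes.

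The resulting internal diagram asserts that two composites $\uS(L\otimes(K\otimes Y),X)\to\uS(Y,(K\otimes L)\pitchfork X)$ agree, and I would evaluate it using four inputs. (i) Naturality of $\underline\phi_{-,Y,X}$ in its tensor variable (Proposition~\ref{pro:esht.1.2}(2)) applied to $c_{K,L}\colon K\otimes L\to L\otimes K$, turning $\underline\phi_{K\otimes L,Y,X}\cdot\uS(c_{K,L}\otimes 1,X)$ into $c\cdot\underline\phi_{L\otimes K,Y,X}$ with $c=\uV(c_{K,L},\uS(Y,X))$, exactly the top arrow of Lemma~\ref{lem:cp2.8.1}. (ii) The internal associativity relation \eqref{eqn:cp2.7.21}, rewriting $\underline\phi_{L\otimes K,Y,X}\cdot\uS(a_{L,K,Y},X)$ as $\underline\pi^{-1}\cdot\uV(L,\underline\phi_{K,Y,X})\cdot\underline\phi_{L,K\otimes Y,X}$. (iii) Lemma~\ref{lem:cp2.8.1} itself, read with $X$ and $Y$ interchanged. (iv) The dual of \eqref{eqn:cp2.7.21} for the tensor-closed $\V$-module $\S^{\op}$ --- legitimate by the dual of Theorem~\ref{thm:cp2.6.9} --- which reads \[\underline\pi\cdot\underline\psi_{K\otimes L,X,Y}\cdot\uS(Y,a^{\op}_{K,L,X})=\uV(K,\underline\psi_{L,X,Y})\cdot\underline\psi_{K,L\pitchfork X,Y}.\]

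Carrying out the chase, I would begin with the up-then-across composite $\underline\psi_{K\otimes L,X,Y}^{-1}\cdot\underline\phi_{K\otimes L,Y,X}\cdot\uS(c_{K,L}\otimes 1,X)\cdot\uS(a_{L,K,Y},X)$, use (i) to absorb the symmetry and (ii) to absorb $a_{L,K,Y}$, and then factor $\underline\phi_{L,K\otimes Y,X}=\underline\psi_{L,X,K\otimes Y}\cdot(\underline\psi_{L,X,K\otimes Y}^{-1}\cdot\underline\phi_{L,K\otimes Y,X})$ so that Lemma~\ref{lem:cp2.8.1} applies to the prefix $c\cdot\underline\pi^{-1}\cdot\uV(L,\underline\phi_{K,Y,X})\cdot\underline\psi_{L,X,K\otimes Y}$. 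This converts the prefix into $\underline\pi^{-1}\cdot\uV(K,\underline\psi_{L,X,Y})\cdot\underline\phi_{K,Y,L\pitchfork X}$, and the whole composite becomes \[\underline\psi_{K\otimes L,X,Y}^{-1}\cdot\underline\pi^{-1}\cdot\uV(K,\underline\psi_{L,X,Y})\cdot\underline\phi_{K,Y,L\pitchfork X}\cdot(\underline\psi_{L,X,K\otimes Y}^{-1}\cdot\underline\phi_{L,K\otimes Y,X}).\] The across-then-up composite is $\uS(Y,a^{\op}_{K,L,X})\cdot(\underline\psi_{K,L\pitchfork X,Y}^{-1}\cdot\underline\phi_{K,Y,L\pitchfork X})\cdot(\underline\psi_{L,X,K\otimes Y}^{-1}\cdot\underline\phi_{L,K\otimes Y,X})$. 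Cancelling the common isomorphism on the right, the two agree precisely when $\underline\pi^{-1}\cdot\uV(K,\underline\psi_{L,X,Y})\cdot\underline\psi_{K,L\pitchfork X,Y}=\underline\psi_{K\otimes L,X,Y}\cdot\uS(Y,a^{\op}_{K,L,X})$, which is exactly (iv). This closes the argument.

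I expect the real work to be bookkeeping of indices rather than any new idea. The delicate point is that the associativity of the module $\S$ on the left-bracketed $(L\otimes K)\otimes Y$ produces the splitting $\underline\pi_{L,K,\uS(Y,X)}$, whereas the associativity of the comodule on $(K\otimes L)\pitchfork X$ demands $\underline\pi_{K,L,\uS(Y,X)}$; it is precisely the symmetry $c$ inserted into Lemma~\ref{lem:cp2.8.1} that interchanges these two, which is why that lemma is phrased exactly as it is. A secondary thing to verify with care is the identification of the internal representative of $\psi^{-1}\cdot\phi$ via \eqref{eqn:cp2.7.32} and its dual, so that the cancellation of $\varpi$ is clean and the external square genuinely descends to the internal one.
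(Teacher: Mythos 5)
Your proposal is correct, and its skeleton is the same as the paper's: symmetry is absorbed by naturality of $\phi$/$\underline\phi$ in the $\V$-variable, the two associativity maps are absorbed by the $\pi$-type compatibilities on the tensor and cotensor sides, and Lemma~\ref{lem:cp2.8.1} does the pivotal work in the middle. The difference is the level at which you run the argument. The paper stays at the hom-set level: it pastes one diagram in which the left and right trapezoids commute by \eqref{eqn:cp2.1.28} and its dual --- and for a given closed $\V$-bimodule these are available essentially by definition, since \eqref{eqn:cp2.1.28} is exactly the diagram \eqref{eqn:cp2.7.4} of Definition~\ref{def:cp2.7.10} characterizing $\underline\phi$ (resp.\ $\underline\psi$) --- while the middle commutes by Lemma~\ref{lem:cp2.8.1}, implicitly transported to hom-sets. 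You instead descend the whole outer square to internal homs via $\varphi$, \eqref{eqn:cp2.7.32} and its dual (your cancellation of the two copies of $\varpi$ is clean, and it is the bimodule condition \eqref{eqn:cp2.1.34} that makes $\phi$ and $\psi$ conjugate by the \emph{same} $\varphi$), and then chase internally using the naturality of $\underline\phi$ from Proposition~\ref{pro:esht.1.2}(2), the internal associativity diagram \eqref{eqn:cp2.7.21}, its dual, and Lemma~\ref{lem:cp2.8.1}; I verified the index bookkeeping in your chase and the final cancellation reducing everything to your identity (iv), and it is sound. What each route buys: the paper's external version needs only the definitional diagrams \eqref{eqn:cp2.7.4}, so it is lighter on inputs, but it leaves tacit the externalization of the internal Lemma~\ref{lem:cp2.8.1}; your internal version makes the one transport step explicit and uniform, at the cost of invoking the strictly stronger internal statement \eqref{eqn:cp2.7.21} and its dual. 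On that last point be slightly careful: \eqref{eqn:cp2.7.21} is proved in Theorem~\ref{thm:cp2.6.9}(2)(A) for the module structure \emph{constructed} from a cylinder, so applying it to the \emph{given} structures on $\S$ and $\S^{\op}$ requires the bijectivity statement of Theorem~\ref{thm:cp2.6.9}(2) (that $\Psi\circ\Phi=1$ and $\Phi\circ\Psi=1$ recover $a$, $\phi$, $\underline\phi$); this is legitimate and is exactly the convention the paper announces at the start of its final section, but it deserves a word in a polished write-up.
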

\begin{proof}
  Consider the following commutative diagram.
  \begin{equation*}
    \xymatrix@C=-3em@R=2em{
      \S((L\otimes K)\otimes Y,X)
      \ar[dr]^{{\phi}}
      \ar[rr]^c
      &&
      \S((K\otimes L)\otimes Y,X)
      \ar[dr]^{{\phi}}
      &&
      \S(Y,(K\otimes L)\pitchfork X)
      \ar[dl]_{{\psi}}
      \\
      &\V(L\otimes K,\uS(Y,X))
      \ar[rr]^c
      \ar[d]_{{\pi}}
      &&
      \V(K\otimes L,\uS(Y,X))
      \ar[d]^{{\pi}}
      &
      \\
      &\V(L,\uV(K,\uS(Y,X)))
      &&
      \V(K,\uV(L,\uS(Y,X)))
      &
      \\
      &
      \V(L,\uS(K\otimes Y,X))
      \ar[u]^{\underline{\phi}}
      &&
      \V(K,\uS(Y,L\pitchfork X))
      \ar[u]_{\underline{\psi}}
      &
      \\
      \S(L\otimes (K\otimes Y),X)
      \ar[ur]_{{\phi}}
      \ar[uuuu]^{a^*}
      &&
      \S(K\otimes Y,L\pitchfork X)
      \ar[ru]_{{\phi}}
      \ar[ul]^{{\psi}}
      &&
      \S(Y,K\pitchfork (L\pitchfork X))
      \ar[ul]^{{\psi}}
      \ar[uuuu]_{a^{\op}}
      }
  \end{equation*}
  The left and the right trapezoids commute by \eqref{eqn:cp2.1.28} and its dual.
  The middle commutes by Lemma \ref{lem:cp2.8.1}.
  Hence the result holds.
\end{proof}

\begin{lemma}
\label{lem:cp2.8.3}
  Let $\V$ be a closed symmetric monoidal category.
  Let $\S$ be a closed $\V$-bimodule.
  Then, for every $X,Y\in\ob\S$, the following diagram commutes.
  \begin{equation}
  \label{eqn:cp2.8.2}
    \xymatrix{
      \S(I\otimes Y,X)
      \ar[r]^\phi
      &
      \V(I,\uS(Y,X))
      &
      \S(Y,I\pitchfork X)
      \ar[l]_\psi
      \\
      &
      \S(Y,X)
      \ar[ul]^{l_Y^*}
      \ar[ur]_{(l_X^{\op})_*}
      &
      }
  \end{equation}
\end{lemma}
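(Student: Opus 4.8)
The plan is to recognize both legs of the triangle in \eqref{eqn:cp2.8.2} as instances of the canonical natural isomorphism $\varphi$ that Definition \ref{eqn:def:1.17} attaches to a tensor-closed $\V$-module, one leg coming from $\S$ and the other from $\S^{\op}$, and then to finish by invoking that a closed $\V$-bimodule forces the $\V$-structure of $\S^{\op}$ to be the opposite $\V$-structure of $\S$. No diagram chase is needed; the whole statement collapses to a reading of the defining formula \eqref{eqn:cp2.7.12} on both $\S$ and $\S^{\op}$.

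First I would identify the left-hand leg. By Definition \ref{eqn:def:1.17}, the component $\varphi_{Y,X}$ of the $\varphi$ of the tensor-closed $\V$-module $\S$ is by \eqref{eqn:cp2.7.12} exactly the composition $\S(Y,X)\xrightarrow{l_Y^*}\S(I\otimes Y,X)\xrightarrow{\phi_{I,Y,X}}\V(I,\uS(Y,X))$, which is precisely the path $l_Y^*$ followed by $\phi$ in \eqref{eqn:cp2.8.2}. Hence the left leg equals $\varphi_{Y,X}$.

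Next I would identify the right-hand leg with the corresponding component of the $\varphi$ of $\S^{\op}$. This is where the bookkeeping lives: I would unwind Definition \ref{eqn:def:1.17} applied to $\S^{\op}$ using that the tensor of $\S^{\op}$ is $K\otimes_{\S^{\op}}X=K\pitchfork X$, its internal hom is $\uS^{\op}(X,Y)=\uS(Y,X)$, its adjunction isomorphism $\phi^{\op}_{K,X,Y}$ is the isomorphism $\psi_{K,X,Y}$ of the closed $\V$-module, and that precomposition in $\S^{\op}$ with the unit $l^{\op}_X$ is ordinary postcomposition in $\S$, namely $(l_X^{\op})_*$. Reading \eqref{eqn:cp2.7.12} for $\S^{\op}$ at $(X,Y)$ then gives exactly $\S(Y,X)\xrightarrow{(l_X^{\op})_*}\S(Y,I\pitchfork X)\xrightarrow{\psi_{I,X,Y}}\V(I,\uS(Y,X))$, which is the path $(l_X^{\op})_*$ followed by $\psi$ in \eqref{eqn:cp2.8.2}. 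Hence the right leg is the component $\varphi^{\op}_{X,Y}$ of the $\varphi$ of $\S^{\op}$.

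Finally I would invoke the definition of a closed $\V$-bimodule: by Proposition \ref{pro:cp2.7.12} the isomorphism $\varphi$ of $\S^{\op}$ is part of the $\V$-structure of $\S^{\op}$, and this $\V$-structure is required to be the opposite $\V$-structure of $\S$, so \eqref{eqn:cp2.1.34} yields $\varphi^{\op}_{X,Y}=\varphi_{Y,X}$. Combining the three steps, both legs equal $\varphi_{Y,X}$ and \eqref{eqn:cp2.8.2} commutes. I expect the only genuine obstacle to be the careful dualization of the middle step---keeping straight that the tensor, unit, and adjunction of $\S^{\op}$ unwind to $\pitchfork$, $l^{\op}$, and $\psi$ on $\S$, and that op-precomposition becomes ordinary postcomposition. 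Once these identifications are fixed, and in contrast with Lemma \ref{lem:cp2.8.1} and Lemma \ref{lem:cp2.8.2}, no chase through $\pi$, $\epsilon$ or the symmetry $c$ is required, because the unit case reduces directly to the defining formula for $\varphi$.
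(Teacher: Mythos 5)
Your proof is correct and is exactly the paper's argument: the paper's one-line proof reads $\phi\cdot l_Y^*=\varphi_{Y,X}=\varphi^{\op}_{X,Y}=\psi\cdot (l^{\op}_X)_*$ by \eqref{eqn:cp2.1.34}, which is precisely your three-step identification of the two legs with the $\varphi$ of $\S$ and of $\S^{\op}$ followed by the closed-bimodule condition. Your write-up merely makes explicit the dualization bookkeeping that the paper leaves implicit.
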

\begin{proof}
  $\phi\cdot l_Y^*=\varphi_{Y,X}=\varphi^{\op}_{X,Y}=\psi\cdot (l^{\op}_X)_*$ by \eqref{eqn:cp2.1.34}.
\end{proof}

\begin{proof}[Proof of Theorem \ref{thm:cp2.1.11}]
  The uniqueness of $a^{\op}$ and $l^{\op}$ follows from Lemma \ref{lem:cp2.8.2} and Lemma \ref{lem:cp2.8.3} respectively.

  We define a morphism $a^{\op}:K\pitchfork(L\pitchfork X)\rightarrow (K\otimes L)\pitchfork X$ of $\S$ natural in $K,L,X$ by Yoneda lemma and the commutativity of the diagram \eqref{eqn:cp2.8.1}.
  By the definition of $a^{\op}$,  the commutativity of \eqref{eqn:cp2.1.42} is reduced to the commutativity of
  the following diagram
  \begin{equation*}
    \resizebox{\textwidth}{!}{
    \xymatrix@R=2em{
      \S(((K\otimes L)\otimes M)\otimes Y, X)
      &
      \S((M\otimes (K\otimes L))\otimes Y, X)
      \ar[l]_c
      &
      \S(M\otimes((K\otimes L)\otimes Y),X)
      \ar[l]_a
      \\
      \S((K\otimes (L\otimes M))\otimes Y, X)
      \ar[u]^a
      &
      &
      \S(M\otimes((L\otimes K)\otimes Y),X)
      \ar[u]^c
      \\
      \S(((L\otimes M)\otimes K)\otimes Y, X)
      \ar[u]^c
      &
      &
      \\
      \S((L\otimes M)\otimes (K\otimes Y), X)
      \ar[u]^a
      &
      \S((M\otimes L)\otimes (K\otimes Y), X)
      \ar[l]_c
      &
      \S(M\otimes(L\otimes (K\otimes Y)),X),
      \ar[uu]_a
      \ar[l]_a
      }}
  \end{equation*}
  hence, to the commutativity of the following diagram of straight arrows.
  \begin{equation*}
    \xymatrix@R=2em{
      ((K\otimes L)\otimes M)\otimes Y
      \ar[r]^c
      \ar[d]_{a}
      &
      (M\otimes (K\otimes L))\otimes Y
      \ar[r]^{a}
      \ar@{..>}[d]^c
      &
      M\otimes((K\otimes L)\otimes Y)
      \ar[d]^c
      \\
      (K\otimes (L\otimes M))\otimes Y
      \ar[d]_c
      &
      (M\otimes (L\otimes K))\otimes Y
      \ar@{..>}[r]^{a}
      &
      M\otimes((L\otimes K)\otimes Y)
      \ar[dd]^{a}
      \\
      ((L\otimes M)\otimes K)\otimes Y
      \ar[d]_{a}
      \ar@{..>}[r]^c
      &
      ((M\otimes L)\otimes K)\otimes Y
      \ar@{..>}[u]_{a}
      \ar@{..>}[d]^{a}
      &
      \\
      (L\otimes M)\otimes (K\otimes Y)
      \ar[r]^c
      &
      (M\otimes L)\otimes (K\otimes Y)
      \ar[r]^{a}
      &
      M\otimes(L\otimes (K\otimes Y))
      }
  \end{equation*}
  The upper left square commutes by \eqref{eqn:elno.4.12}.
  The lower right square commutes by \eqref{eqn:cp2.6.4}.
  Therefore, the boundary is a commutative diagram.

  Next, we define a morphism $l^{\op}:X\rightarrow I\pitchfork X$ of $\S$  natural in $X$ by Yoneda lemma and the commutativity of the diagram \eqref{eqn:cp2.8.2}.
  By the definitions of $l^{\op}$ and $a^{\op}$, the commutativity of \eqref{eqn:cp2.1.43} is reduced to the commutativity of
  the following diagram of straight arrows.
  \begin{equation*}
    \xymatrix{
      \S((K\otimes I)\otimes Y,X)
      \ar[r]^c
      \ar[d]_r
      &\S((I\otimes K)\otimes Y,X)
      \ar[d]^a
      \ar@{..>}[dl]_l
      \\
      \S(K\otimes Y,X)
      &
      \S(I\otimes(K\otimes Y),X)
      \ar[l]_l}
  \end{equation*}
  But the upper triangle commutes by \eqref{eqn:elno.4.13} and
  the lower triangle commutes by the commutativity of \eqref{eqn:cp2.7.6}.
\end{proof}

\bibliographystyle{amsalpha}
\providecommand{\bysame}{\leavevmode\hbox to3em{\hrulefill}\thinspace}
\providecommand{\MR}{\relax\ifhmode\unskip\space\fi MR }
\providecommand{\MRhref}[2]{%
  \href{http://www.ams.org/mathscinet-getitem?mr=#1}{#2}
}
\providecommand{\href}[2]{#2}

\end{document}